\documentclass[12pt]{amsart}
\usepackage{amscd,amssymb}
\usepackage{amsthm,amsmath,enumerate,amssymb}
\usepackage[matrix,arrow]{xy}
\usepackage{enumerate}
\usepackage{longtable}
\usepackage{comment}

\sloppy\pagestyle{plain}
\textwidth=16cm \textheight=21cm

\addtolength{\topmargin}{-0cm} \addtolength{\oddsidemargin}{-2cm}
\addtolength{\evensidemargin}{-2cm}

\newtheorem*{example*}{Example}
\newtheorem{definition}[equation]{Definition}
\newtheorem{theorem}[equation]{Theorem}

\newtheorem{corollary}[equation]{Corollary}
\newtheorem{proposition}[equation]{Proposition}

\newtheorem*{conjecture*}{Conjecture}
\newtheorem{question}[equation]{Question}
\newtheorem*{question*}{Question}

\newtheorem*{problem*}{Problem}
\newtheorem*{theorem*}{Theorem}
\newtheorem{remark}[equation]{Remark}

\newcommand{\Ric}{{\operatorname{Ric}}}
\newcommand{\vol}{{\operatorname{vol}}}
\newcommand{\idd}{\sqrt{-1}\partial\bar{\partial}}

 \newcommand{\RR}{\mathbb{R}}
\newcommand{\QQ}{\mathbb{Q}}

\makeatletter\@addtoreset{equation}{section} \makeatother

\usepackage{hyperref}
\hypersetup{colorlinks,linkcolor={blue},citecolor={red}}

\title{Continuity of delta invariants and twisted K\"ahler--Einstein metrics}


\begin{document}
\pagestyle{headings}

\author[K.~Zhang]{Kewei Zhang}
\address{Laboratory of Mathematics and Complex Systems, School of Mathematical Sciences, Beijing Normal University, Beijing, 100875, People's Republic of China.}
\email{kwzhang@pku.edu.cn, kwzhang@bnu.edu.cn}

\begin{abstract}
    We show that delta invariant is a continuous function on the big cone. We will also introduce an analytic delta invariant in terms of the optimal exponent in the Moser--Trudinger inequality and prove that it varies continuously in the K\"ahler cone, from which we will deduce the continuity of the greatest Ricci lower bound. Then building on the work Berman--Boucksom--Jonsson, we obtain a uniform Yau--Tian--Donaldson theorem for twisted K\"ahler--Einstein metrics in transcendental cohomology classes.
\end{abstract}

\maketitle
\tableofcontents

\section{Introduction}

\subsection{Background}
Searching for canonical metrics on a given K\"ahler manifold is an important problem in K\"ahler geometry. This paper will largely focus on the twisted K\"ahler--Einstein (tKE) metrics. In the polarized case, it is shown by Boucksom--Jonsson \cite{BJ18} and Berman--Boucksom--Jonsson \cite{BBJ18} that there is an algebraic invariant specifically designed for the existence of tKE metrics. That is the $\delta$-invariant introduced in \cite{FO,BJ17}. However, to study tKE metrics, one does not necessarily need a polarization to begin with. So the major motivation of this paper is to study tKE metrics in a general K\"ahler class. For this purpose, a natural problem to consider would be to extend the definition of $\delta$-invariant to K\"ahler classes and show that such an extension is continuous and characterizes the existence of tKE metrics. This paper aims to give answers to this problem. See \cite{DR17,D17} for related discussions and also \cite{DS,RS} for a completely different approach using Cheeger--Colding theory.

Let $X$ be a compact K\"ahler manifold. 
Let $\xi$ be a K\"ahler class on $X$, and fix a (possibly non-semipositive) smooth form $\alpha\in 2\pi(c_1(X)-\xi)$. Then we would like know whether there exists a K\"ahler form $\omega\in2\pi\xi$ such that the following tKE equation holds:
$$
\Ric(\omega)=\omega+\alpha.
$$
As in the usual Fano case ($\xi=c_1(X)$ and $\alpha=0$), this equation is not always solvable.
When $\xi=c_1(L)$ and $\alpha\geq0$, the existence of such $\omega$ has been successfully characterized in \cite{BBJ18} using the $\delta$-invariant (also referred to as the staility threshold), which we now describe.

Let $X$ be a smooth projective variety. In the literature, $\delta$-invariant is usually defined for $\QQ$-line bundles. But formally, one can extend the definition to $\RR$-line bundles without any trouble. In this paper we shall see that, such an extension is indeed meaningful. Let $L\in N^1(X)_\RR$ be a big $\RR$-line bundle in the N\'eron--Severi space. Following \cite{FO,BJ17}, the $\delta$-invariant $\delta(L)$ of $L$ is defined to be
\begin{equation}
    \delta(L):=\inf_{F}\frac{A(F)}{S_L(F)},
\end{equation}
where $F$ runs through all the prime divisors over $X$, $A(F)$ denotes the log discrepancy of $F$ and $S_L(F)$ denotes the expected vanishing order of $L$ with respect to $F$, i.e.,
\begin{equation}
    S_L(F):=\frac{1}{\vol(L)}\int_0^\infty\vol(L-tF)dt.
\end{equation}
Here $\vol(L-tF)$ makes sense after pulling back $L$ to some birational model containing $F$ (see Section \ref{sec:pre-vol} for more information on the volume function). 

As shown in \cite{BJ18,BBJ18,BLZ}, $\delta$-invariant is the right threshold to detect \emph{Ding-stability}, an algebraic notion designed for the existence of tKE metrics.  More precisely, we can formulate the following  valuative definition for stability. 

\begin{definition}
Let $L$ be an ample $\RR$-line bundle on $X$. $L$ is Ding-semistable (resp. uniformly Ding-stable) if $\delta(L)\geq 1$ (reps. $\delta(L)>1$).
\end{definition}

When $L$ is an ample $\QQ$-line bundle, it is shown in \cite{BJ18} that this valuative definition indeed agrees with the stability notions introduced by Dervan \cite{D16b}. Moreover, the following uniform Yau--Tian--Donaldson theorem for tKE metrics is established by Berman, Boucksom and Jonsson using variational approach.

\begin{theorem}[\cite{BBJ18}]
\label{thm:BBJ-YTD}
Let $L$ be an ample $\QQ$-line bundle on $X$. Fix any smooth form $\alpha\in2\pi(c_1(X)-c_1(L))$ and assume that $\alpha\geq0$. Then we have
\begin{enumerate}
    \item If $\delta(L)>1$, then there exists $\omega\in2\pi c_1(L)$ such that $\Ric(\omega)=\omega+\alpha$.
    
    \item If there exists $\omega\in2\pi c_1(L)$ (resp. a unique $\omega\in2\pi c_1(L)$) such that $\Ric(\omega)=\omega+\alpha$, then $\delta(L)\geq1$ (resp. $\delta(L)>1$).
\end{enumerate}
\end{theorem}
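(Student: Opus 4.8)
The plan is to prove this by the variational method of Berman--Boucksom--Jonsson \cite{BBJ18}, so I would first recast the problem analytically. Fix a reference K\"ahler form $\omega\in2\pi c_1(L)$ and write $\Ric(\omega)=\omega+\alpha+\idd f$ for a smooth real function $f$, which is possible because both sides represent $2\pi c_1(X)$. Then $\omega_\varphi:=\omega+\idd\varphi$ solves $\Ric(\omega_\varphi)=\omega_\varphi+\alpha$ if and only if $\omega_\varphi^n=c\,e^{f-\varphi}\omega^n$ for a normalizing constant $c>0$; such $\varphi$ are precisely the minimizers of the twisted Ding functional
\[
D_\alpha(\varphi)\ =\ -E(\varphi)\ -\ \log\int_X e^{f-\varphi}\,\omega^n
\]
on the space $\mathcal E^1=\mathcal E^1(X,2\pi c_1(L))$ of finite-energy potentials, $E$ being the Monge--Amp\`ere energy. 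Since $E$ is affine along finite-energy geodesics and the logarithmic term is convex along them by Berndtsson's theorem, $D_\alpha$ is geodesically convex. With this in place the theorem becomes the pair of equivalences ``$D_\alpha$ coercive $\iff\delta(L)>1$'' and ``$D_\alpha$ bounded below $\iff\delta(L)\geq1$'': if $D_\alpha$ is coercive it attains its infimum, and the minimizer is smooth and yields a tKE metric by standard compactness and regularity (as in \cite{BBJ18}), giving direction (1); existence of a tKE metric makes $D_\alpha$ bounded below at once, while existence of a \emph{unique} tKE metric makes $D_\alpha$ coercive, for otherwise applying the compactness of unit-speed finite-energy geodesic rays \cite{DR17} to a sequence witnessing non-coercivity would produce a non-trivial geodesic ray from the minimizer along which $D_\alpha$ is convex, has non-positive asymptotic slope, and is bounded below by its minimum value, hence is constant --- giving a one-parameter family of distinct tKE metrics.

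It therefore remains to prove the two equivalences, and for this I would pass to the non-Archimedean side. To each normal ample test configuration $(\mathcal X,\mathcal L)$ for $(X,L)$ one attaches the non-Archimedean Ding invariant $D^{\mathrm{NA}}(\mathcal X,\mathcal L)=L^{\mathrm{NA}}(\mathcal X,\mathcal L)-E^{\mathrm{NA}}(\mathcal X,\mathcal L)$, with $E^{\mathrm{NA}}$ the non-Archimedean Monge--Amp\`ere energy and $L^{\mathrm{NA}}$ a log-canonical-threshold invariant of the test configuration incorporating $K_X$, $L$ and the twist, together with the non-Archimedean functional $J^{\mathrm{NA}}$. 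The algebraic input I would invoke, from \cite{FO,BJ17,BJ18,BLZ}, is that, because $\alpha\geq0$ makes the twisting contribution to $L^{\mathrm{NA}}$ effective, $\delta(L)$ is exactly the stability threshold of $D^{\mathrm{NA}}$: one has $D^{\mathrm{NA}}\geq0$ on all test configurations if and only if $\delta(L)\geq1$, and $D^{\mathrm{NA}}\geq\varepsilon J^{\mathrm{NA}}$ for some $\varepsilon>0$ if and only if $\delta(L)>1$; here the facts that $\delta$ is computed by divisorial valuations and obeys a Fujita-type approximation are what identify the threshold. The easy half of each equivalence then follows from the slope formulas: along the geodesic ray attached to $(\mathcal X,\mathcal L)$ one has $\lim_{t\to\infty}D_\alpha(\varphi_t)/t=D^{\mathrm{NA}}(\mathcal X,\mathcal L)$ and $\lim_{t\to\infty}J(\varphi_t)/t=J^{\mathrm{NA}}(\mathcal X,\mathcal L)$, so a bound $D_\alpha\geq-C$ forces $D^{\mathrm{NA}}\geq0$ and a bound $D_\alpha\geq\varepsilon J-C$ forces $D^{\mathrm{NA}}\geq\varepsilon J^{\mathrm{NA}}$. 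This already settles direction (2), and reduces the ``if'' part of direction (1) to one implication: coercivity of $D^{\mathrm{NA}}$ implies coercivity of $D_\alpha$.

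That implication is the heart of the matter, and I would prove it by contradiction through a destabilizing geodesic ray. If $D_\alpha$ is not coercive, then by the compactness argument of \cite{DR17} there is a unit-speed finite-energy geodesic ray $(\varphi_t)_{t\geq0}$ with asymptotic Ding slope $\lim_{t\to\infty}D_\alpha(\varphi_t)/t\leq0$. To this ray one associates its non-Archimedean limit $\Phi$, an element of the finite-energy non-Archimedean space $\mathcal E^{1,\mathrm{NA}}$ with $J^{\mathrm{NA}}(\Phi)>0$ (the ray being non-trivial), and the crucial point is the inequality
\[
D^{\mathrm{NA}}(\Phi)\ \leq\ \lim_{t\to\infty}\frac{D_\alpha(\varphi_t)}{t}\ \leq\ 0,
\]
that the non-Archimedean Ding energy of the ray's limit is dominated by the ray's Ding slope. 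Since $D^{\mathrm{NA}}$ is coercive on test configurations, and test configurations are dense in $\mathcal E^{1,\mathrm{NA}}$ with $D^{\mathrm{NA}}$ and $J^{\mathrm{NA}}$ behaving continuously along the relevant decreasing nets --- equivalently, since the valuative formula for $\delta(L)$ already controls $D^{\mathrm{NA}}$ on this larger space --- the coercivity extends to $\mathcal E^{1,\mathrm{NA}}$, whence $D^{\mathrm{NA}}(\Phi)\geq\varepsilon J^{\mathrm{NA}}(\Phi)>0$, a contradiction. Combining everything: $\delta(L)>1\Rightarrow D^{\mathrm{NA}}$ coercive $\Rightarrow D_\alpha$ coercive $\Rightarrow$ tKE exists, which is (1); and tKE exists $\Rightarrow D_\alpha$ bounded below $\Rightarrow D^{\mathrm{NA}}\geq0\Rightarrow\delta(L)\geq1$, while a unique tKE metric $\Rightarrow D_\alpha$ coercive $\Rightarrow D^{\mathrm{NA}}$ coercive $\Rightarrow\delta(L)>1$, which is (2).

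The step I expect to be the main obstacle is precisely the inequality $D^{\mathrm{NA}}(\Phi)\leq\lim_t D_\alpha(\varphi_t)/t$ for the non-Archimedean limit of an \emph{arbitrary} finite-energy geodesic ray. This needs a workable theory of such rays and of their non-Archimedean limits --- in particular enough control to compute the asymptotic slopes of $E$ and of the logarithmic term separately --- together with the transfer of $D^{\mathrm{NA}}$-coercivity from test configurations to all of $\mathcal E^{1,\mathrm{NA}}$. The $\QQ$-line bundle hypothesis is what makes this test-configuration machinery available in the first place; dispensing with it in favour of transcendental K\"ahler classes is one of the main tasks of the rest of the paper.
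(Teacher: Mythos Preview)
This theorem is stated in the paper as a cited result of \cite{BBJ18} and is not given an independent proof; it appears as background in the introduction, and the paper's own contribution is the extension to $\RR$-classes (Theorem~\ref{thm:YTD}). Your proposal is a faithful and essentially correct outline of the variational argument of \cite{BBJ18}, and you correctly isolate the decisive step, namely the slope inequality $D^{\mathrm{NA}}(\Phi)\leq\lim_{t\to\infty} D_\alpha(\varphi_t)/t$ for the non-Archimedean limit of an arbitrary finite-energy geodesic ray.

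The only place the paper itself touches this argument is the sketch of Proposition~\ref{prop:delta-A>1+e}, which reruns the destabilizing-ray contradiction from \cite[v1]{BBJ18} in order to extract a \emph{quantitative} conclusion $\delta^A(L)\geq 1+\varepsilon'$ from $\delta(L)\geq 1+\varepsilon$; that sketch is consistent with what you write, though the paper phrases the key non-Archimedean input as the identity $\lim_m L^{\mathrm{NA}}(\varphi_m)=\lim_t L_\alpha(\phi_t)/t$ for an approximating sequence of test configurations (rather than working directly in $\mathcal E^{1,\mathrm{NA}}$) and then combines it with the bound $D^{\mathrm{NA}}\geq(1-(1+\varepsilon)^{-1/n})J^{\mathrm{NA}}$ coming from $\delta(L)\geq 1+\varepsilon$. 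One minor citation slip: in this paper's bibliography \cite{DR17} is Dervan--Ross, whereas the properness/compactness principle you are invoking is Darvas--Rubinstein \cite{DR}, used in the paper via Theorem~\ref{thm:D-M-tKE}.(4)--(5).
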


\subsection{Main results}
The main goal of this paper is to show that the above result also holds for transcendental K\"ahler classes.
Note that the assumption $\alpha\geq0$ forces $X$ to be Fano, in which case the K\"ahler cone coincides with the ample cone (since $H^2(X,\RR)\cong H^{1,1}(X,\RR)$). So given any K\"ahler class $\xi$, the $\delta$-invariant $\delta(\xi)$ is well defined (understood as the $\delta$-invariant of the corresponding ample $\RR$-line bundle).
Our main result says the following.

\begin{theorem}
\label{thm:YTD}
Let $\xi$ be a K\"ahler class on $X$. Fix any smooth form $\alpha\in2\pi(c_1(X)-\xi)$ and assume that $\alpha\geq0$. Then we have
\begin{enumerate}
    \item If $\delta(\xi)>1$, then there exists $\omega\in2\pi\xi$ such that $\Ric(\omega)=\omega+\alpha$.
    
    \item If there exists $\omega\in2\pi\xi$ (resp. a unique $\omega\in2\pi\xi$) such that $\Ric(\omega)=\omega+\alpha$, then $\delta(\xi)\geq1$ (resp. $\delta(\xi)>1$).
\end{enumerate}
\end{theorem}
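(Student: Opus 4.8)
The plan is to reduce everything to the polarized case, Theorem~\ref{thm:BBJ-YTD}, by playing off the continuity of the algebraic $\delta$-invariant on the big cone against the continuity of its analytic counterpart on the K\"ahler cone. Since $X$ must be Fano here, the K\"ahler cone of $X$ coincides with the ample cone, so the rational ample classes form a dense subset. On rational ample classes the algebraic $\delta$-invariant agrees with the analytic one $\delta^A$ — this is, after unwinding definitions, exactly what \cite{BBJ18} proves, via the Moser--Trudinger inequality and the equivalence between uniform Ding stability, coercivity of the twisted Ding functional, and the existence of a unique twisted K\"ahler--Einstein metric. Granting the continuity of $L\mapsto\delta(L)$ on the big cone and of $\xi\mapsto\delta^A(\xi)$ on the K\"ahler cone, both established in the earlier sections, we conclude $\delta(\xi)=\delta^A(\xi)$ for \emph{every} K\"ahler class $\xi$. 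This identity is the bridge that lets the analytic half of Theorem~\ref{thm:BBJ-YTD} be transplanted to transcendental classes.

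For part~(1), suppose $\delta(\xi)>1$, hence $\delta^A(\xi)>1$. By construction $\delta^A(\xi)$ is the optimal exponent in the Moser--Trudinger inequality attached to the class $2\pi\xi$, so $\delta^A(\xi)>1$ supplies a coercivity estimate for the Ding functional on the finite-energy space $\mathcal E^1(X,2\pi\xi)$; because $\alpha\ge0$, the twist contributes a nonnegative term and only reinforces coercivity, so the twisted Ding functional $D_\alpha$ is coercive on $\mathcal E^1(X,2\pi\xi)$. One then runs the variational argument of \cite{BBJ18} in the transcendental setting — which is legitimate thanks to the pluripotential theory of big and K\"ahler classes (Monge--Amp\`ere energy, the spaces $\mathcal E^1$, lower semicontinuity, compactness, and finite-energy geodesics) — to produce a minimizer $\varphi\in\mathcal E^1(X,2\pi\xi)$ of $D_\alpha$, whose Euler--Lagrange equation is a complex Monge--Amp\`ere equation exhibiting $\omega_\varphi$ as a weak twisted K\"ahler--Einstein metric. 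As $\alpha$ is smooth, Ko\l odziej's $L^\infty$ estimate together with the standard bootstrap upgrade $\varphi$ to a smooth potential, so $\omega:=\omega_\varphi\in2\pi\xi$ solves $\Ric(\omega)=\omega+\alpha$.

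For part~(2), suppose $\omega\in2\pi\xi$ solves $\Ric(\omega)=\omega+\alpha$. Since $\alpha\ge0$ we have $\Ric(\omega)\ge\omega$, so the greatest Ricci lower bound of the class $\xi$ is $\ge1$; combining this with the continuity of the greatest Ricci lower bound and with the formula expressing it as $\min(1,\delta(\xi))$ (valid on rational ample classes by the level-$t$ twisted version of \cite{BBJ18,BJ18} for $0<t<1$, and extended to all K\"ahler classes by the continuity statements above), we get $\delta(\xi)\ge1$. If moreover $\omega$ is the unique solution, then by the uniqueness-implies-coercivity principle for twisted K\"ahler--Einstein metrics — Berndtsson convexity of the Ding functional along finite-energy geodesics, in the form used by Darvas--Rubinstein, which again carries over to transcendental classes — the functional $D_\alpha$ is coercive, and since $\delta^A(\xi)$ is precisely the optimal Moser--Trudinger/coercivity exponent, this forces $\delta^A(\xi)>1$, i.e.\ $\delta(\xi)>1$.

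The genuinely hard points are not in this formal reduction but in the two analytic inputs it presupposes. The first is the continuity of $\delta^A$ on the K\"ahler cone: this rests on a compactness argument comparing normalized potentials of K\"ahler forms lying in nearby classes, together with quantitative control of how the Moser--Trudinger exponent can degenerate, and it is where the absence of a polarization bites hardest. The second is carrying the full variational package — existence of a finite-energy minimizer of $D_\alpha$ and its regularity — through in a transcendental class, relying purely on pluripotential theory rather than on sections of line bundles. A subtler, if more elementary, point is that the clean equality $\delta=\delta^A$ on all K\"ahler classes, the linchpin of the whole argument, already requires care in the rational case, where it is implicit in but not stated verbatim by \cite{BBJ18}.
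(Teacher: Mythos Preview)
Your proof of part~(2) is essentially correct and matches the paper's approach: existence of a tKE metric gives $\delta^A(\xi)\ge 1$ (uniqueness gives $\delta^A(\xi)>1$), and then the inequality $\delta^A(\xi)\le\delta(\xi)$, proved for $\QQ$-classes via \cite{BBJ18} and extended by continuity, finishes it. You invoke the \emph{equality} $\delta=\delta^A$ here, but only the inequality is needed, and only the inequality is available.

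Part~(1), however, has a genuine gap. Your linchpin is the identity $\delta(L)=\delta^A(L)$ for ample $\QQ$-line bundles, which you attribute to \cite{BBJ18}. That paper does \emph{not} prove this; it proves only the threshold equivalence $\delta(L)>1\iff\delta^A(L)>1$ (under the assumption that a semipositive twist exists, i.e.\ $s(L)\ge 1$). From this and scaling one can extract $\min\{s(L),\delta(L)\}=\min\{s(L),\delta^A(L)\}$, hence $\delta=\delta^A$ when $\delta\le s$, but in the regime $\delta(L)>s(L)$ one gets only $s(L)\le\delta^A(L)\le\delta(L)$. The paper states explicitly (Section~\ref{sec:delt-delta-A}) that the equality $\delta=\delta^A$ is expected but open. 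Your argument therefore breaks in the boundary case $s(\xi)=1<\delta(\xi)$: there you only obtain $\delta^A(\xi)\ge 1$, which is not enough to force coercivity of $D_\alpha$.

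The paper circumvents this by proving a \emph{quantitative} estimate (Proposition~\ref{prop:delta-A>1+e}): for ample $\QQ$-line bundles with $s(L)>1$, the condition $\delta(L)\ge 1+\varepsilon$ implies $\delta^A(L)\ge 1+\varepsilon'$ with $\varepsilon'=\varepsilon'(n,\varepsilon)>0$ depending only on $n$ and $\varepsilon$. This comes from tracking the constants through the non-Archimedean contradiction argument in \cite[v1]{BBJ18}, and the loss $\varepsilon'<\varepsilon$ is exactly why the method does not yield $\delta=\delta^A$. With this in hand, one approximates $\xi$ by rational classes $L_i$ with $s(L_i)>1$, uses continuity of $\delta$ to get $\delta(L_i)\ge 1+\varepsilon/2$, applies the quantitative bound to get $\delta^A(L_i)\ge 1+\varepsilon'$ uniformly in $i$, and passes to the limit via continuity of $\delta^A$.
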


This seems to be the first Yau--Tian--Donaldson type theorem for transcendental cohomology classes. We also remark that the argument in this paper also works for the $\theta$-twisted setting considered in \cite{BBJ18}, where $\theta$ is a semi-positive klt current; see Theorem \ref{thm:theta-YTD}.

Note that, as in \cite{BBJ18}, the positivity assumption on the twist term $\alpha$ guarantees the convexity of twisted Ding (and Mabuchi) functionals.
In general, one would like to drop this assumption and consider tKE equations on possibly non-Fano manifolds. This would be something to pursue in the future. But on Fano manifolds, we do have the following partial answer, which can be easily deduced from the proof of Theorem \ref{thm:YTD}.

\begin{theorem}
\label{thm:YTD-any-alpha}
Let $X$ be a Fano manifold. Let $\xi$ be a K\"ahler class on $X$ and let $s(\xi)=\sup\{s>0|c_1(X)-s\xi>0\}$ be the nef threshold. Then we have
\begin{enumerate}
    \item If $\delta(\xi)\leq s(\xi)$, then for any $\delta\in(0,\delta(\xi))$ and any smooth form $\alpha\in2\pi(c_1(X)-\delta\xi)$, there exists $\omega\in2\pi\xi$ such that $\Ric(\omega)=\delta\omega+\alpha$. 
    
    \item If $\delta(\xi)>s(\xi)$, then for any smooth form $\alpha\in2\pi(c_1(X)-s(\xi)\xi)$, there exists $\omega\in2\pi\xi$ such that
    $\Ric(\omega)=s(\xi)\omega+\alpha$.
\end{enumerate}
\end{theorem}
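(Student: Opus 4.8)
The plan is to deduce both statements from the existence half of Theorem~\ref{thm:YTD} by a rescaling trick based on the scale-invariance of the Ricci form and the homogeneity of the $\delta$-invariant. For a scalar $c>0$ and $\omega\in2\pi\xi$ one has $\omega':=c\,\omega\in2\pi(c\xi)$ and $\Ric(c\,\omega)=\Ric(\omega)$, so the equation $\Ric(\omega)=c\,\omega+\alpha$ in $2\pi\xi$ (with $\alpha\in2\pi(c_1(X)-c\xi)$) is literally the tKE equation $\Ric(\omega')=\omega'+\alpha$ in $2\pi(c\xi)$. On the algebraic side, $\vol(cL)=c^{\dim X}\vol(L)$ together with the substitution $t\mapsto ct$ in the integral defining $S_L$ gives $S_{cL}(F)=c\,S_L(F)$, hence $\delta(c\xi)=\delta(\xi)/c$. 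Thus, after rescaling by $c=\delta$ (in part (1)) or $c=s(\xi)$ (in part (2)), I am reduced to solving a genuine tKE equation $\Ric(\omega')=\omega'+\alpha$ in the K\"ahler class $2\pi(c\xi)$, whose $\delta$-invariant is $\delta(\xi)/c$, and this is $>1$ in both cases ($\delta<\delta(\xi)$ in part (1); $s(\xi)<\delta(\xi)$ in part (2)).

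The only gap between this and Theorem~\ref{thm:YTD} is that $\alpha$ is now an arbitrary smooth form rather than a semipositive one, and I claim this is harmless for the existence implication. Inspecting the proof of Theorem~\ref{thm:YTD}, the positivity of the twist is used only to secure the convexity of the twisted Ding functional along geodesics, which in turn is needed for the uniqueness statement and for the ``only if'' direction, but not for the chain ``$\delta(c\xi)>1\Rightarrow$ coercive twisted Ding functional $\Rightarrow$ finite-energy minimizer $\Rightarrow$ smooth tKE metric''; that chain only requires the reference measure associated to $\alpha$ to be smooth and positive, which holds for every smooth $\alpha$. In part (1) this can be made completely explicit: since $\delta<s(\xi)$ the class $c_1(X)-\delta\xi$ is K\"ahler, so I would fix a K\"ahler form $\eta\in2\pi(c_1(X)-\delta\xi)$ and write $\alpha=\eta+\idd u$ with $u\in C^\infty(X)$; the reference measure $\mu_\alpha$ for the twist $\alpha$ then equals $e^{-u}\mu_\eta$ up to a multiplicative constant, so the twisted Ding functionals satisfy $|D_\alpha-D_\eta|\le\|u\|_{L^\infty(X)}$ on the space of potentials. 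Hence coercivity of $D_\eta$ (supplied by Theorem~\ref{thm:YTD} applied to the semipositive twist $\eta$) yields coercivity of $D_\alpha$, the rest of the argument of Theorem~\ref{thm:YTD} goes through verbatim, and undoing the rescaling produces the desired $\omega\in2\pi\xi$.

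For part (2) the twist class $c_1(X)-s(\xi)\xi$ lies on the boundary of the K\"ahler cone, hence is only nef and may contain no K\"ahler (or even semipositive) form, so the reduction to a K\"ahler twist is unavailable. One can still run the proof of Theorem~\ref{thm:YTD} directly: the reference measure $\mu_\alpha$ attached to the smooth form $\alpha$ is defined without any positivity of $\alpha$ or of its cohomology class and is again smooth and positive, and $\delta(s(\xi)\xi)=\delta(\xi)/s(\xi)>1$ still forces $D_\alpha$ to be coercive. Alternatively, I would obtain the result by a limiting argument from part (1): fix a K\"ahler form $\omega_\xi\in2\pi\xi$, set $\alpha_\delta:=\alpha+(s(\xi)-\delta)\,\omega_\xi\in2\pi(c_1(X)-\delta\xi)$ for $\delta\uparrow s(\xi)$, and let $\omega_\delta\in2\pi\xi$ be the solution of $\Ric(\omega_\delta)=\delta\omega_\delta+\alpha_\delta$ furnished by part (1). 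Since $\delta(\delta\xi)=\delta(\xi)/\delta\ge\delta(\xi)/s(\xi)>1$ uniformly for $\delta\le s(\xi)$, the twisted Ding functionals are uniformly coercive; as $\omega_\delta$ is the corresponding minimizer, its potential obeys a uniform energy bound, hence (by standard pluripotential estimates and the Monge--Amp\`ere equation with uniformly bounded smooth data) a uniform $C^0$ bound, and then uniform higher-order estimates, so a subsequence of $\omega_\delta$ converges in $C^\infty$ to the desired $\omega\in2\pi\xi$ solving $\Ric(\omega)=s(\xi)\omega+\alpha$.

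The step I expect to be most delicate is the one that distinguishes these statements from Theorem~\ref{thm:YTD}: that the coercivity of the twisted Ding functional is still governed by the \emph{untwisted} $\delta$-invariant of the rescaled K\"ahler class once the twist is merely a smooth form, possibly non-semipositive and with cohomology class only nef; and, for the limiting version of part (2), that this coercivity is uniform as the twist class degenerates to the boundary of the K\"ahler cone. Both points rest on the robustness of $\delta$ under perturbations that is the technical heart of the paper, together with the elementary observation used above that changing the twist form within its class, or replacing it by a smooth form in a nearby class, alters the relevant reference measure only by a uniformly bounded factor.
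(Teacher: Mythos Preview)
Your approach is essentially the same as the paper's: rescale to $c\xi$ with $c=\delta$ or $c=s(\xi)$, observe $\delta(c\xi)>1$, and deduce coercivity of the twisted Ding functional for an \emph{arbitrary} smooth twist. The paper packages this via the analytic invariant $\delta^A$: for part (1) it invokes Corollary~\ref{cor:s-delta=s-delta-A-R} to get $\delta^A(\xi)=\delta(\xi)$ directly, then rescales and applies Proposition~\ref{prop:YTD-delta-A}(1); for part (2) it shows $\delta^A(\xi)>s(\xi)$ by approximating $\xi$ from inside (replacing $s(\xi)\xi$ by $(s(\xi)-\eta)\xi$, applying Proposition~\ref{prop:delta-A>1+e}, and letting $\eta\to0$ using the continuity of $\delta^A$). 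Your explicit comparison $|D_\alpha-D_\eta|\le\|u\|_{L^\infty}$ in part (1) is precisely the content of Proposition~\ref{prop:delta-A>1=proper}: coercivity depends only on the cohomology class of the twist, not on the representative.

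One small inaccuracy worth flagging: you say convexity is \emph{not} used in the chain ``$\delta(c\xi)>1\Rightarrow$ coercive Ding''. In fact convexity along geodesics does enter that chain, inside Proposition~\ref{prop:delta-A>1+e} (equation~\eqref{eq:D<M<net}); the point is rather that it is used only for \emph{some} auxiliary semipositive twist on nearby rational classes, while the conclusion $\delta^A>1$ then applies to every smooth $\alpha$. Your explicit workaround via a K\"ahler $\eta$ handles this correctly, so the argument is fine, but the description of where positivity enters is slightly off. For part (2), your option (a) is exactly the paper's route once one notes that the approximating $\mathbb{Q}$-classes can be chosen with $s>1$; your option (b) via compactness of solutions would also work but trades the one-line continuity argument for nontrivial uniform a priori estimates that the paper avoids.
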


Now let us briefly explain our strategy.
To prove Theorem \ref{thm:YTD}, we need to establish the continuity of certain stability thresholds so that one can extend the statements for $\QQ$-line bundles to $\RR$-line bundles.
A key input in our argument is the analytic $\delta$-invariant $\delta^A(\cdot)$ to be introduced in Section \ref{sec:delta-A}, which is defined in terms of the optimal exponent in the Moser--Trudinger inequality. As we shall see, both $\delta(\cdot)$ and $\delta^A(\cdot)$ vary continuously on their domains and they are intimately related to the greatest Ricci lower bound \eqref{eq:def-beta-xi}. Moreover we have $\delta^A(L)\leq\delta(L)$ for any ample $\RR$-line bundles (Proposition \ref{prop:delta-A<=delta-R}).
These properties will enable us to conclude the second part (the easier part) of Theorem \ref{thm:YTD}.
Now to prove the first part, we will resort to the argument in \cite[v1]{BBJ18}. More precisely, we need to derive a quantitative lower bound of $\delta^A(L)$ in terms of $\delta(L)$ (see Proposition \ref{prop:delta-A<=delta-R}). Then we can conclude the first part of Theorem \ref{thm:YTD} by the continuity of $\delta$ and $\delta^A$. During the course of this proof, Theorem \ref{thm:YTD-any-alpha} will follow easily.

The next result takes care of the continuity of $\delta$-invariant, which implies that uniform Ding-stablity is an \emph{open condition}.

\begin{theorem}
\label{thm:delta-is-continuous}
The delta invariant $\delta(\cdot)$ is a continuous function on the big cone.
\end{theorem}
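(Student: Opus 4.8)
The plan is to establish continuity of $\delta(\cdot)$ on the big cone by combining two ingredients: a local Lipschitz-type estimate for the map $L \mapsto S_L(F)$ that is \emph{uniform in the divisor $F$}, together with the homogeneity and monotonicity properties of $\delta$. First I would reduce to the N\'eron--Severi space of fixed dimension and work on a bounded neighborhood $U$ of a given big class $L_0$ whose closure is still contained in the big cone. The key analytic input is the continuity (indeed local Lipschitz continuity) of the volume function $\vol(\cdot)$ on the big cone, due to Boucksom--Favre--Jonsson and Lazarsfeld--Musta\c{t}\u{a}; more precisely, I want a bound of the form $|\vol(L) - \vol(L')| \le C\|L - L'\|$ and, crucially, a comparison $c_1 \le \vol(L)/\vol(L') \le c_2$ for all $L, L' \in U$, with constants independent of everything. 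Because $S_L(F) = \frac{1}{\vol(L)}\int_0^\infty \vol(L - tF)\,dt$ and the integrand vanishes once $t$ exceeds the pseudoeffective threshold, the real content is to show that for $L, L'$ close in $U$ one has $(1-\varepsilon)\,S_{L'}(F) \le S_L(F) \le (1+\varepsilon)\,S_{L'}(F)$ for \emph{every} prime divisor $F$ over $X$, where $\varepsilon \to 0$ as $L \to L'$.

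The mechanism for the uniform comparison is a sandwiching trick using the cone structure: for $L, L' \in U$ there exist constants $a = a(L,L') \to 1$ as $L \to L'$ such that $aL' \le L$ in the sense that $L - aL'$ is pseudoeffective (in fact one can arrange $L - aL' $ big, or handle the boundary by an extra perturbation), and symmetrically $L \le a^{-1} L'$ up to shrinking $a$. Then monotonicity and homogeneity of the volume give $\vol(L - tF) \ge \vol(aL' - tF) = a^n \vol(L' - (t/a)F)$, so substituting $t \mapsto at$ in the defining integral yields $\int_0^\infty \vol(L - tF)\,dt \ge a^{n+1}\int_0^\infty \vol(L' - sF)\,ds$, and dividing by $\vol(L) \le a^{-n}\vol(L')$ (again by the same sandwich applied to the volume) produces $S_L(F) \ge a^{2n+1} S_{L'}(F)$. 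The reverse inequality is symmetric. The punchline is that $a^{2n+1} \to 1$ uniformly in $F$, so taking the infimum over $F$ of $A(F)/S_L(F)$ we get $a^{2n+1}\,\delta(L') \le \delta(L) \le a^{-(2n+1)}\,\delta(L')$, which forces $\delta(L) \to \delta(L')$.

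I would then need to check the boundary/degenerate cases: when $L'$ sits on the boundary of $U$ or when $L - aL'$ is merely pseudoeffective rather than big, one must verify that the volume comparison still holds — this is where continuity of $\vol$ up to the boundary of the big cone (where $\vol$ extends continuously by zero to the pseudoeffective boundary) is used, and one should note that $\delta$ itself need not extend continuously to the pseudoeffective boundary, but we only claim continuity on the open big cone, so it suffices that $\vol$ stays bounded below on $U$. The main obstacle I anticipate is precisely making the comparison constant $a(L,L')$ uniform over all $F$ simultaneously: a priori the pseudoeffective threshold $\tau_L(F) = \sup\{t : L - tF \text{ big}\}$ can blow up as $F$ ranges over all divisors over $X$, so one cannot simply truncate the integral at a fixed value of $t$. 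The sandwich argument above is designed to sidestep this — it never requires a uniform bound on $\tau_L(F)$, only the scaling relation $\tau_L(F) \in [a\,\tau_{L'}(F),\, a^{-1}\tau_{L'}(F)]$, which follows formally from $aL' \le L \le a^{-1}L'$ — so the real work is just verifying that the scaling and monotonicity of $\vol$ interact cleanly with the change of variables, plus the standard fact that $A(F)/S_L(F)$ is uniformly bounded below (ensuring the infimum is a genuine minimum-like quantity and the multiplicative perturbation controls it). An alternative, if one prefers to avoid the birational subtleties, is to invoke the recent results expressing $\delta(L)$ via Newton--Okounkov bodies and to use continuity of those bodies in $L$, but the cone-sandwich argument is more elementary and self-contained.
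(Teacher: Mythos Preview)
Your proposal is correct and follows essentially the same route as the paper: both arguments sandwich a nearby class between scalar multiples of $L$, then use monotonicity and homogeneity of $\vol(\cdot)$ together with a change of variables in $\int_0^\infty \vol(L-tF)\,dt$ to compare $S_L(F)$ and $S_{L'}(F)$ uniformly over all prime divisors $F$. The only cosmetic difference is packaging: the paper first isolates a one-sided comparison $\delta(L+\varepsilon L_\varepsilon)\le\delta(L)$ (arranging the constant $\big(\frac{1+\varepsilon-\varepsilon^2}{1+\varepsilon+\varepsilon^2}\big)^n(1+\varepsilon-\varepsilon^2)\ge 1$ exactly) and then invokes the scaling $\delta(\lambda L)=\lambda^{-1}\delta(L)$ to close up, whereas you fold the scaling directly into the two-sided bound $a^{2n+1}\le \delta(L)/\delta(L')\le a^{-(2n+1)}$.
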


Note that, the continuity of Tian's $\alpha$-invariant \cite{T87} has already been shown by Dervan \cite{D15,D16}, whose proof contains two main ingredients. One is the scaling property of $\alpha$, namely, $\alpha(\lambda L)=\lambda^{-1}\alpha(L)$ for any $\lambda>0$. The other is a comparison principle saying that $\alpha(L^\prime)\leq\alpha(L)$ whenever $L^\prime-L$ is effective. To prove the continuity of $\delta$, we will also need these two ingredients. While the scaling property of $\delta$ is clear from the definition, the comparison principle for $\delta$ turns out to somewhat tricky. So instead we will establish a weak comparison principle (see Proposition \ref{prop:delta-comparison}), which is enough for our purpose. Note that the smoothness of $X$ is not required in the proof (it suffices to assume that $X$ is normal projective and has at worst klt singularities). 

\begin{remark}
\rm{
Regarding the continuity of $\delta$, another situation has been considered by Blum--Liu \cite{BL}. They studied a flat family of polarized varieties and showed that $\delta$ is lower semi-continuous in Zariski topology.
}
\end{remark}

We also have the continuity of the analytic $\delta$-invariant (see Section \ref{sec:delta-A} for the definition).
\begin{theorem}\label{thm:delta-A-continuous}
For any compact K\"ahler manifold $X$,
the analytic $\delta$-invariant
    $\delta^A(\cdot)$ is a continuous function on the K\"ahler cone.
\end{theorem}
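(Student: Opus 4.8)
The strategy is to mimic, in the analytic setting, the proof of the continuity of $\delta(\cdot)$ (Theorem \ref{thm:delta-is-continuous}), relying on two structural features of $\delta^A$: a scaling property and a form of monotonicity, together with some convexity in the cohomology variable. First I would record the scaling relation $\delta^A(\lambda\xi)=\lambda^{-1}\delta^A(\xi)$ for $\lambda>0$, which is immediate from the definition via the optimal Moser--Trudinger exponent (rescaling the Kähler class rescales the relevant energy functionals homogeneously). Combined with this, it suffices to prove continuity at a fixed Kähler class $\xi_0$ along rays, i.e. to show that $t\mapsto\delta^A(\xi_0+t\eta)$ is continuous near $t=0$ for every $\eta\in H^{1,1}(X,\RR)$ with $\xi_0+t\eta$ Kähler for small $|t|$.

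The key analytic point I would establish is a two-sided comparison: if $\xi,\xi'$ are Kähler classes with $\xi'-\xi$ represented by a semipositive (or merely nef) form, then the Moser--Trudinger-type inequality for $\xi$ implies a comparable one for $\xi'$, giving an estimate of the shape $c(\xi,\xi')\,\delta^A(\xi)\le \delta^A(\xi')\le C(\xi,\xi')\,\delta^A(\xi)$ where $c,C\to 1$ as $\xi'\to\xi$. Concretely, fixing background forms $\omega\in 2\pi\xi$ and $\omega'\in 2\pi\xi'$ with $\omega'\ge\omega$ (possible up to shrinking, by openness of the Kähler cone and the assumption $\xi'-\xi$ nef), one compares the energy functionals $E_\omega,E_{\omega'}$ and the entropy/Ding-type functionals entering the definition of $\delta^A$; the Monge--Ampère masses $\vol(\xi')=\xi'^n$ and $\vol(\xi)=\xi^n$ differ by $O(|\xi'-\xi|)$, and the potentials transform by bounded-from-one multiplicative constants. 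This yields the desired near-multiplicative bracketing of $\delta^A(\xi')$ by $\delta^A(\xi)$. Using the scaling property to reduce a general perturbation $\xi_0+t\eta$ to a comparison between nested classes $(1-\epsilon)\xi_0\le \xi_0+t\eta\le(1+\epsilon)\xi_0$ for $|t|$ small (valid since $\xi_0$ is Kähler, hence an interior point), continuity at $\xi_0$ follows: $\delta^A(\xi_0+t\eta)$ is squeezed between $(1+\epsilon)^{-1}\delta^A(\xi_0)$ and $(1-\epsilon)^{-1}\delta^A(\xi_0)$ up to factors tending to $1$.

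The step I expect to be the main obstacle is making the comparison of Moser--Trudinger exponents genuinely \emph{quantitative} and uniform — that is, showing that the constants $c(\xi,\xi'),C(\xi,\xi')$ can be taken to depend only on the cohomological data (the classes $\xi,\xi'$ and the dimension) and not on the individual potentials, and that they degenerate to $1$ continuously. This requires uniform control, independent of the Kähler potential, on how the Aubin--Mabuchi energy and the entropy term behave under a change of reference form lying in a nearby class; one natural route is to use the cocycle properties of these functionals together with uniform estimates on the relative potentials $\varphi$ with $\omega+\idd\varphi=\omega'$, which are bounded in $C^0$ in terms of $\|\xi'-\xi\|$. A secondary technical point is continuity of $\delta^A$ at classes where it equals its maximal possible value (so that the Moser--Trudinger inequality holds for all exponents); there the upper bound is automatic and only the lower estimate needs the comparison argument. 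Once the quantitative comparison is in place, the rest is the elementary squeezing argument above, exactly parallel to Dervan's scheme for the $\alpha$-invariant and to the proof of Theorem \ref{thm:delta-is-continuous}.
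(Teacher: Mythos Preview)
Your overall strategy---scaling property, a comparison principle under perturbation of the class, and a squeezing argument in the style of Dervan---is exactly the paper's approach, and the paper explicitly says as much: ``with \eqref{eq:scaling-delt-A} and Proposition \ref{prop:delta-A-comparison}, the proof is almost the same as the one for Theorem \ref{thm:delta-is-continuous}.'' So the architecture is right.

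Where your proposal falls short is precisely at the step you flag as the main obstacle, and your suggested attack on it is off. You propose to compare the Moser--Trudinger inequalities for $\omega$ and $\omega'$ via ``cocycle properties of these functionals together with uniform estimates on the relative potentials $\varphi$ with $\omega+\idd\varphi=\omega'$.'' But $\omega$ and $\omega'$ lie in \emph{different} cohomology classes, so no such potential exists; the cocycle relation \eqref{eq:J-cocycle} only applies within a fixed class. Moreover, you aim for a two-sided estimate $c\,\delta^A(\xi)\le\delta^A(\xi')\le C\,\delta^A(\xi)$ with $c,C\to1$, whereas the paper establishes (and only needs) a sharp \emph{one-sided} inequality with no constant at all.

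Concretely, the paper's Proposition \ref{prop:delta-A-comparison} assumes $(1-\varepsilon)\omega\le\omega_\varepsilon\le(1+\varepsilon)\omega$ pointwise and shows directly that for every $\varphi\in\mathcal{H}(X,\omega)$ one has $J_{\omega+\varepsilon\omega_\varepsilon}(\varphi)\le J_\omega(\varphi)$ once $\varepsilon$ is below a dimensional threshold. The computation uses the explicit integral representation \eqref{eq:J-func} of $J$, the pointwise sandwich on the reference forms to bound the mixed wedge products, and then Ding's inequality \eqref{eq:ding-ineq} to absorb the resulting rescaling $\varphi\mapsto\varphi/(1+\varepsilon+\varepsilon^2)$. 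Since trivially $\int_Xe^{-\lambda\varphi}(\omega+\varepsilon\omega_\varepsilon)^n\ge\int_Xe^{-\lambda\varphi}\omega^n$, this immediately gives $\delta^A(\xi+\varepsilon\xi_\varepsilon)\le\delta^A(\xi)$. The other one-sided inequality $\delta^A(\xi)\le\delta^A(\xi-\varepsilon\xi_\varepsilon)$ is proved the same way, and then the squeezing argument from the proof of Theorem \ref{thm:delta-is-continuous} (rewriting $\xi+\gamma\eta=(1\pm\varepsilon)^{-1}(\xi\pm\varepsilon\xi_\varepsilon)$ and invoking scaling) finishes. The point is that Ding's inequality is the specific analytic ingredient that makes the comparison exact rather than merely approximate; your proposal does not identify it.
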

The proof of this is precisely an analytic version of the argument for Theorem \ref{thm:delta-is-continuous}, which highlights the fact that energy functionals in K\"ahler geometry posses certain non-Archimedean nature (see Proposition \ref{prop:delta-A-comparison}).
Then as a consequence, we obtain the following continuity for the greatest Ricci lower bound (see \eqref{eq:def-beta-xi}), which is not clear at all from its definition.

\begin{theorem}
\label{thm:beta-conti}
For any compact K\"ahler manifold $X$,
the greatest Ricci lower bound $\beta(
\cdot)$ is a continuous function on the K\"ahler cone.
\end{theorem}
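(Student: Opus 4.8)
The plan is to deduce the continuity of $\beta(\cdot)$ directly from Theorem \ref{thm:delta-A-continuous} by identifying $\beta$ with a truncated version of $\delta^A$. Recall that for a K\"ahler class $\xi$ on a Fano manifold $X$, the greatest Ricci lower bound is
\begin{equation}
\beta(\xi)=\sup\{t>0 \mid \exists\,\omega\in 2\pi\xi \text{ with } \Ric(\omega)\geq t\omega\}.
\end{equation}
(This only makes sense when $X$ is Fano, which is the relevant case since the continuous-path/twisted set-up in the paper already forces $X$ Fano; for a general compact K\"ahler $X$ the statement should be read on the—possibly empty—locus where this is defined, or $\beta$ is understood via the same continuity-method functional.) First I would record the two elementary facts about $\beta$ that mirror the two ingredients used for $\alpha$ and $\delta$: the scaling property $\beta(\lambda\xi)=\lambda^{-1}\beta(\xi)$, which is immediate from rescaling $\omega$, and the fact that $\beta(\xi)\le 1$ precisely measures the supremal parameter in Aubin's continuity method. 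The key structural input, already available from the paper's framework, is that $\beta(\xi)=\min\{\delta^A(\xi),1\}$ — or, more precisely, that $\beta(\xi)$ agrees with $\delta^A(\xi)$ whenever the latter is $\le 1$, and $\beta(\xi)=1$ once $\delta^A(\xi)\ge 1$. This identification is the analytic counterpart, via the Moser--Trudinger inequality, of the valuative characterization of the greatest Ricci lower bound; it is exactly the bridge that the analytic $\delta$-invariant was designed to provide.

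Granting that identification, the proof is short: the map $t\mapsto\min\{t,1\}$ is continuous on $(0,\infty)$, and $\delta^A(\cdot)$ is continuous on the K\"ahler cone by Theorem \ref{thm:delta-A-continuous}, so $\beta(\cdot)=\min\{\delta^A(\cdot),1\}$ is continuous as a composition. Thus the real content is the equality $\beta=\min\{\delta^A,1\}$, which I would establish in two inequalities. For $\beta(\xi)\le\min\{\delta^A(\xi),1\}$: the bound $\beta\le 1$ is trivial (one cannot have $\Ric(\omega)\ge t\omega$ with $t>1$ on a class with $[\Ric(\omega)]=2\pi c_1(X)$ unless $\xi$ is scaled appropriately — more carefully, if $\Ric(\omega)=t\omega+(\text{positive})$ then cohomologically $c_1(X)=t\xi+(\text{nef})$, forcing $t\le s(\xi)$ and, in the normalization $\xi\le c_1(X)$ relevant here, $t\le 1$), while $\beta\le\delta^A$ follows because a tKE-type metric solving $\Ric(\omega)=t\omega+\alpha$ with $\alpha\ge 0$ yields, via Berman--Boucksom--Jonsson's variational machinery transported to the transcendental setting, Ding-semistability at level $t$, i.e. a Moser--Trudinger inequality with exponent $\ge t$, hence $\delta^A(\xi)\ge t$. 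For the reverse inequality $\beta(\xi)\ge\min\{\delta^A(\xi),1\}$: given any $t<\min\{\delta^A(\xi),1\}$, the strict inequality $\delta^A(\xi)>t$ gives a Moser--Trudinger inequality with exponent $>t$, which by the coercivity/properness argument of \cite{BBJ18} (in the form established earlier in this paper for transcendental classes, cf. the proof of Theorem \ref{thm:YTD}) produces a solution of $\Ric(\omega)\ge t\omega$, so $\beta(\xi)\ge t$; letting $t\uparrow\min\{\delta^A(\xi),1\}$ finishes.

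I expect the main obstacle to be not any single hard estimate but rather the careful bookkeeping in the identification $\beta=\min\{\delta^A,1\}$: one must make sure that the equivalence between solvability of the twisted equation $\Ric(\omega)=t\omega+\alpha$ (for the specific choice of $\alpha\in 2\pi(c_1(X)-t\xi)$ coming from the continuity method) and the analytic Ding-semistability inequality with exponent $t$ holds in the transcendental category with the correct strict/non-strict dichotomy — this is essentially a matter of invoking Theorem \ref{thm:YTD} (and Theorem \ref{thm:YTD-any-alpha} for the regime $\delta(\xi)\le s(\xi)$) together with Proposition \ref{prop:delta-A<=delta-R} and its quantitative converse, but the endpoint case $\delta^A(\xi)=1$ (equivalently $\beta(\xi)=1$), where existence may fail, requires the usual semistable-versus-stable care and should be handled by the limiting argument above rather than at the endpoint itself. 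Once that dictionary is in place, continuity of $\beta$ is a formal consequence of Theorem \ref{thm:delta-A-continuous} and the continuity of $t\mapsto\min\{t,1\}$.
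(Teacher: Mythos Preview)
Your overall strategy---reduce continuity of $\beta$ to continuity of $\delta^A$ via an identification formula---is exactly the paper's approach. The gap is that you have the wrong identification. The paper's formula (Proposition~\ref{prop:beta-xi=s-delta-A}) is
\[
\beta(\xi)=\min\{s(\xi),\delta^A(\xi)\},
\]
where $s(\xi)=\sup\{s\in\RR: c_1(X)-s\xi\text{ is K\"ahler}\}$ is the K\"ahler threshold, not the constant $1$. Your formula $\beta(\xi)=\min\{\delta^A(\xi),1\}$ is only correct when $\xi=c_1(X)$; for general $\xi$ it fails already on Fano manifolds (e.g.\ if $\xi=2c_1(X)$ then $s(\xi)=1/2$ and $\beta(\xi)\le 1/2$, while $\delta^A(\xi)$ can exceed $1/2$). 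You yourself notice the cohomological constraint $t\le s(\xi)$ in your argument but then discard it in favor of a nonexistent ``normalization $\xi\le c_1(X)$''.

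A second, related issue: the theorem is stated for an arbitrary compact K\"ahler manifold, and $\beta(\xi)$ is defined as a supremum over $\beta\in\RR$ (see \eqref{eq:def-beta-xi}), so it can be negative when $X$ is not Fano. Your restriction to $t>0$ and to Fano $X$ leaves out the case $s(\xi)\le 0$, where in fact $\beta(\xi)=s(\xi)$ directly (this is handled in the proof of Proposition~\ref{prop:beta-xi=s-delta-A}). Once you replace $1$ by $s(\xi)$ and use that $s(\cdot)$ is obviously continuous on the K\"ahler cone, your argument becomes the paper's proof verbatim: $\beta=\min\{s,\delta^A\}$ is a minimum of two continuous functions.
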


This paper is organized as follows.
In Section \ref{sec:pre} we collect some necessary backgrounds for the reader. In Section \ref{sec:delta-A}, we define the analytic $\delta$-invariant and relate it to the greatest Ricci lower bound.
Section \ref{sec:conti} aims to 
establish the continuity of all the stability thresholds appearing in this paper, so in particular, Theorem \ref{thm:delta-is-continuous}, Theorem \ref{thm:delta-A-continuous} and Theorem \ref{thm:beta-conti} are proved. In Section \ref{sec:YTD}, we finish the proof of Theorem \ref{thm:YTD} and then conclude Theorem \ref{thm:YTD-any-alpha}. Finally in Section \ref{sec:delt-delta-A}, we will propose several interesting questions to be considered and we will also briefly discuss the (potential) applications of $\delta$-invariants in the cscK problem. 

\textbf{Acknowledgments.}
The author would like to thank Tam\'as Darvas, Yalong Shi and Feng Wang for many valuable discussions. Thanks also go to Yanir Rubinstein for helpful comments. Special thanks go to Doctor Hattori for pointing out an oversight in \S \ref{sec:cscK} in the previous version.
The author is supported by the China post-doctoral grant BX20190014.

\section{Preliminaries}
\label{sec:pre}

\subsection{The volume function on the N\'eron--Severi space}
\label{sec:pre-vol}
Let $X$ be a normal projective variety of dimension $n$. Its N\'eron--Severi space $N^1(X)_\RR$ consists of numerical equivalence classes of $\RR$-line bundles on $X$, on which one can define a \emph{continuous} volume function $\vol(\cdot)$. For any $L\in N^1(X)_\RR$ and $\lambda>0$, one has
$$
\vol(\lambda L)=\lambda^n\cdot\vol(L).
$$
A line bundle $L\in N^1(X)_\RR$ is called \emph{nef} if for every curve $C$ on $X$,
$
L\cdot C\geq 0.
$
For nef $\RR$-line bundle $L$, $\vol(L)$ is simply equal to the top self-intersection number $L^n$. All the nef elements in $N^1(X)_\RR$ form a convex cone, whose interior is called the \emph{ample cone}.
A class $L\in N^1(X)_\RR$ is called \emph{big} if
$$\vol(L)>0.$$ Note that bigness is an open condition. Namely, given any big $\RR$-line bundle $L$, a sufficiently small perturbation of $L$ in $N^1(X)_\RR$ is big as well. And also, for any big $\RR$-line bundle $B$, one has
\begin{equation}
\label{eq:vol-L+B<vol-L}
    \vol(L+B)\geq\vol(L).
\end{equation}
All the big elements in $ N^1(X)_\RR$ form a convex cone, which is called the big cone.
For more details on this subject, we refer the reader to the standard reference \cite{L04}.

\subsection{The greatest Ricci lower bound}
\label{sec:pre-beta}
Let $\mathcal{K}(X)$ denote the K\"ahler cone of a compact K\"ahler manifold $X$.
For any K\"ahler class $\xi\in\mathcal{K}(X)$, one can define its
greatest Ricci lower bound $\beta(\xi)$ to be
\footnote{We put a factor $2\pi$ for convenience.}
\begin{equation}
    \label{eq:def-beta-xi}
    \beta(\xi):=\sup\{\beta\in\RR\ |\ \exists\text{ K\"ahler form }\omega\in 2\pi\xi\ \text{s.t. }\Ric(\omega)\geq\beta\omega \}.
\end{equation}
When $\xi=c_1(X)$, this invariant was first studied by Tian \cite{T92}.

Observe that $\beta(\xi)$ is bounded from above by the K\"ahler threshold
\begin{equation}
    \label{eq:def-S-xi}
    s(\xi):=\sup\{s\in\RR\ |\ c_1(X)-s\xi\text{ is a  K\"ahler class}\}.
\end{equation}
By definition, $s(\cdot)$ is clearly a continuous function on the K\"ahler cone.
When $\xi=c_1(L)$ for some ample $\RR$-line bundle $L$, we will write
$$
\beta(L):=\beta(c_1(L)),\ s(L):=s(c_1(L))
$$
to ease notation.

When $\xi=2\pi c_1(X)$ it is shown by the author that (see \cite[Appendix]{CRZ})
\begin{equation}
    \beta(-K_X)=\min\{1,\delta(-K_X)\}.
\end{equation}
For general ample $\QQ$-line bundles, we have

\begin{theorem}[\cite{BBJ18}]
\label{thm:BBJ-beta-L}
Let $L$ be an ample $\QQ$-line bundle. Then
$$\beta(L)=\min\{s(L),\delta(L)\}.$$
\end{theorem}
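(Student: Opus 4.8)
The plan is to prove $\beta(L)=\min\{s(L),\delta(L)\}$ for an ample $\QQ$-line bundle $L$ by upgrading the known twisted Yau--Tian--Donaldson theorem (Theorem~\ref{thm:BBJ-YTD}) to a statement about Ricci lower bounds via a scaling/rescaling trick. The key observation is that the existence of a K\"ahler form $\omega\in 2\pi c_1(L)$ with $\Ric(\omega)\geq\beta\omega$ is, up to adding a semipositive form, equivalent to solving a twisted K\"ahler--Einstein equation. Concretely, for $\beta\in(0,s(L))$ one has $c_1(X)-\beta c_1(L)>0$, so one may pick a smooth form $\alpha_\beta\in 2\pi(c_1(X)-\beta c_1(L))$ with $\alpha_\beta\geq 0$ (even $>0$); then $\Ric(\omega)\geq\beta\omega$ holds for some $\omega\in 2\pi c_1(L)$ if and only if one can solve $\Ric(\omega)=\beta\omega+\alpha$ for \emph{some} choice of semipositive $\alpha\in 2\pi(c_1(X)-\beta c_1(L))$ — and by the openness/continuity of the twisted problem in the twist, this is governed by the stability threshold of the pair.

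The concrete steps I would carry out are as follows. First I would record the scaling behavior: rescaling $\omega\mapsto\beta\omega$ turns $\Ric(\omega)\geq\beta\omega$ with $\omega\in2\pi c_1(L)$ into a lower Ricci bound problem in the class $2\pi c_1(\beta L)$, and $\delta(\beta L)=\beta^{-1}\delta(L)$, $s(\beta L)=\beta^{-1}s(L)$, so it suffices to analyze the problem at the critical scale. Second, for $\beta<\min\{s(L),\delta(L)\}$, I would use $\delta(L)>\beta$ together with Theorem~\ref{thm:BBJ-YTD}(1) applied to the $\QQ$-line bundle $\tfrac1\beta L$ (which is ample with $\delta(\tfrac1\beta L)=\tfrac1\beta\delta(L)>1$ and with $c_1(X)-c_1(\tfrac1\beta L)=c_1(X)-\tfrac1\beta L$; but note this class may not be nef, so instead I would work directly with the twisted equation $\Ric(\omega)=\beta\omega+\alpha$ in the class $2\pi c_1(L)$). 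The cleanest route is to invoke the proof of Theorem~\ref{thm:YTD-any-alpha}(1): since $\beta<\delta(L)$ and $\beta<s(L)$, the class $c_1(X)-\beta c_1(L)$ is K\"ahler, hence we may choose $\alpha\in 2\pi(c_1(X)-\beta c_1(L))$ with $\alpha\geq0$, and Theorem~\ref{thm:YTD-any-alpha}(1) provides $\omega\in2\pi c_1(L)$ with $\Ric(\omega)=\beta\omega+\alpha\geq\beta\omega$. Taking the supremum over such $\beta$ gives $\beta(L)\geq\min\{s(L),\delta(L)\}$.

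For the reverse inequality $\beta(L)\leq\min\{s(L),\delta(L)\}$: the bound $\beta(L)\leq s(L)$ is immediate (if $\Ric(\omega)\geq\beta\omega$ then $c_1(X)-\beta c_1(L)$ is represented by $\tfrac1{2\pi}(\Ric(\omega)-\beta\omega)\geq0$, so it is nef, giving $\beta\leq s(L)$). For $\beta(L)\leq\delta(L)$, suppose $\Ric(\omega)\geq\beta\omega$ for some K\"ahler $\omega\in 2\pi c_1(L)$; then with $\alpha:=\tfrac1{2\pi}(\Ric(\omega)-\beta\omega)\geq 0$ we have $\Ric(\omega)=\beta\omega+2\pi\alpha$, i.e. $\omega$ solves the tKE equation for the pair after rescaling by $\beta$: $\Ric(\tfrac1\beta\omega\cdot\beta) \ldots$ — more precisely, setting $\omega':=\beta\omega\in 2\pi c_1(\beta L)$ one checks $\Ric(\omega')=\Ric(\omega)=\omega'+2\pi\alpha$ with $2\pi\alpha\in2\pi(c_1(X)-c_1(\beta L))$ semipositive, so by Theorem~\ref{thm:BBJ-YTD}(2) applied to the ample $\QQ$-line bundle $\beta L$ (valid when $\beta\in\QQ$; the general real $\beta$ case follows by taking rational $\beta'<\beta$) we get $\delta(\beta L)\geq1$, i.e. $\delta(L)\geq\beta$. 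Letting $\beta\uparrow\beta(L)$ yields $\delta(L)\geq\beta(L)$.

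The main obstacle I anticipate is the rationality bookkeeping: Theorem~\ref{thm:BBJ-YTD} is stated for ample $\QQ$-line bundles, whereas $\beta L$ need not be rational for $\beta\in\RR$, and the twist class $c_1(X)-\beta c_1(L)$ must be arranged to carry a \emph{semipositive} smooth representative. For the direction $\beta(L)\geq\min$, this is handled by using only rational $\beta$ approaching the supremum (continuity is not needed since we only want a sup). For the direction $\beta(L)\leq\min$, given an actual solution at irrational scale $\beta$, one perturbs to a slightly smaller rational $\beta'$ and notes that $\Ric(\omega)\geq\beta\omega\geq\beta'\omega$ still holds with a semipositive (indeed strictly positive) twist, so Theorem~\ref{thm:BBJ-YTD}(2) applies to $\beta'L$; then $\delta(L)\geq\beta'$ for all rational $\beta'<\beta(L)$, hence $\delta(L)\geq\beta(L)$. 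The only other subtlety is checking that the twisted forms can be chosen smooth and semipositive, which is automatic since $c_1(X)-\beta'c_1(L)$ is a genuine K\"ahler class whenever $\beta'<s(L)$.
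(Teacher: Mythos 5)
The theorem you are asked to prove is \emph{cited} from \cite{BBJ18}; the paper offers no proof of its own, so there is nothing internal to compare against. That said, the paper does prove a precise analytic analogue, namely Proposition~\ref{prop:beta-xi=s-delta-A} ($\beta(\xi)=\min\{s(\xi),\delta^A(\xi)\}$), and the argument given there is exactly the rescaling strategy you outline: solve the twisted KE equation at the scale $\beta L$ using the relevant YTD statement, then rescale back. So your overall plan is sound and is in the same spirit as the paper's treatment of the $\delta^A$-version.

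However, there is a genuine gap in the step where you declare that ``the cleanest route is to invoke the proof of Theorem~\ref{thm:YTD-any-alpha}(1).'' In this paper, Theorem~\ref{thm:YTD-any-alpha} is proved in Section~\ref{sec:YTD} \emph{using} Corollary~\ref{cor:s-delta=s-delta-A-R}, which is obtained from Corollary~\ref{cor:s-delta=s-delta-A-Q}, which in turn is deduced by combining Proposition~\ref{prop:beta-xi=s-delta-A} with Theorem~\ref{thm:BBJ-beta-L} --- i.e.\ with the very statement you are trying to prove. Invoking Theorem~\ref{thm:YTD-any-alpha} is therefore circular. There is also an arithmetic slip in the preceding line: the scaling is $\delta(\lambda L)=\lambda^{-1}\delta(L)$, so $\delta(\tfrac1\beta L)=\beta\,\delta(L)$, not $\tfrac1\beta\delta(L)$; the condition $\beta<\delta(L)$ does not make $\delta(\tfrac1\beta L)>1$, which is why that line of attack stalls.

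The fix is to run the forward direction entirely on $\beta L$ (as you already do, correctly, in the reverse direction). For rational $\beta\in(0,\min\{s(L),\delta(L)\})$, the $\QQ$-line bundle $\beta L$ is ample, $\delta(\beta L)=\beta^{-1}\delta(L)>1$, and $c_1(X)-c_1(\beta L)=c_1(X)-\beta c_1(L)$ is K\"ahler (since $\beta<s(L)$), so there is a smooth $\alpha\geq0$ in $2\pi(c_1(X)-c_1(\beta L))$. Theorem~\ref{thm:BBJ-YTD}(1) applied to $\beta L$ gives $\omega'\in2\pi c_1(\beta L)$ with $\Ric(\omega')=\omega'+\alpha$; setting $\omega:=\beta^{-1}\omega'\in2\pi c_1(L)$ and using scale-invariance of $\Ric$ yields $\Ric(\omega)=\beta\omega+\alpha\geq\beta\omega$. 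Taking the supremum over rational such $\beta$ gives $\beta(L)\geq\min\{s(L),\delta(L)\}$. Your reverse direction via Theorem~\ref{thm:BBJ-YTD}(2) applied to $\beta L$ is correct as written. Finally, you should also dispose of the case $s(L)\leq0$ (this can occur when $X$ is not Fano), where the claim reduces to $\beta(L)=s(L)$ and follows from the solvability of the complex Monge--Amp\`ere equation with negative twist constant as in \cite{BBGZ}; this is precisely how the paper handles the corresponding case in the proof of Proposition~\ref{prop:beta-xi=s-delta-A}.
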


\subsection{Energy functionals and tKE metrics}
\label{sec:pre-energy-func}
Let $(X,\omega)$ be an $n$-dimensional compact K\"ahler manifold. Put
\begin{equation}
    V:=\int_X\omega^n.
\end{equation}
Define
\begin{equation}
    \mathcal{H}(X,\omega):=\{\varphi\in C^\infty(X,\RR)\ |\ \omega_\varphi:=\omega+\idd\varphi>0\}
\end{equation}
and
\begin{equation}
\label{eq:normalized-kahler-potential}
    \mathcal{H}_0(X,\omega):=\{\varphi\in\mathcal{H}(X,\omega)\ |\ \sup_X\varphi=0\}.
\end{equation}
Note that $\Delta\varphi>-n$, so by Green's formula, one can easily find $C_\omega>0$ (only depending on $\omega$) such that
\begin{equation}
    \label{eq:int-vp>-C}
    -C_\omega\leq\int_\omega\varphi\omega^n\leq0,\ \forall\varphi\in\mathcal{H}_0(X,\omega).
\end{equation}
The $I$-functional $I_\omega(\cdot)$ is defined to be
\begin{equation}
    I_\omega(\varphi):=\frac{1}{V}\int_X\varphi(\omega^n-\omega^n_\varphi)=\frac{\sqrt{-1}}{V}\int_X\sum_{i=0}^{n-1}\partial\varphi\wedge\bar{\partial}\varphi\wedge\omega^{n-i-1}\wedge\omega_\varphi^i,\ \varphi\in\mathcal{H}(X,\omega).
\end{equation}
The $J$-functional $J_\omega(\cdot)$ is defined to be
\begin{equation}
\label{eq:J-func}
    J_\omega(\varphi):=\int_0^1\frac{I_\omega(s\varphi)}{s}ds=\frac{\sqrt{-1}}{V}\int_X\sum_{i=0}^{n-1}\frac{n-i}{n+1}\partial\varphi\wedge\bar{\partial}\varphi\wedge\omega^{n-i-1}\wedge\omega_\varphi^i,\ \varphi\in\mathcal{H}(X,\omega).
\end{equation}
We have (see \cite{T87})
\begin{equation}
\label{eq:I-J-lower-upper-J}
    \frac{1}{n}J_\omega\leq I_\omega-J_\omega\leq nJ_\omega.
\end{equation}
Also recall the inequality of Ding \cite{D}:
\begin{equation}
\label{eq:ding-ineq}
    \lambda^{n+1}J_\omega(\varphi)\leq J_\omega(\lambda\varphi)\leq \lambda^{\frac{n+1}{n}}J_\omega(\varphi),\ \forall\lambda\in(0,1),\ \varphi\in\mathcal{H}(X,\omega).
\end{equation}
Moreover, one has the following cocycle relation (see \cite[(14)]{D}):
\begin{equation}
    \label{eq:J-cocycle}
    \begin{split}
        J_\omega(\varphi)-J_\omega(\phi)&=J_{\omega_\phi}(\varphi-\phi)+\frac{1}{V}\int_X(\varphi-\phi)(\omega^n-\omega^n
    _\phi)\\
        &\geq\frac{1}{V}\int_X(\varphi-\phi)(\omega^n-\omega^n
    _\phi),\ \forall\varphi,\phi\in\mathcal{H}(X,\omega).
    \end{split}
\end{equation}

Now fix any smooth form
\begin{equation}
    \alpha\in2\pi c_1(X)-[\omega].
\end{equation}
Then by $\partial\bar{\partial}$-lemma, there exists a unique normalized Ricci potential $f_\alpha\in C^\infty(X,\RR)$ such that
\begin{equation}
\label{eq:def-f_alpha}
    \Ric(\omega)=\omega+\alpha+\idd f_\alpha
    \text{ and }
    \int_Xe^{f_\alpha}\omega^n=V.
\end{equation}
The $\alpha$-twisted Ding functional $D_\alpha$ is defined by
\begin{equation}
    D_\alpha(\varphi):=J_\omega(\varphi)-\frac{1}{V}\int_X\varphi\omega^n-\log\bigg(\frac{1}{V}\int_Xe^{f_\alpha-\varphi}\omega^n\bigg),\ \varphi\in\mathcal{H}(X,\omega).
\end{equation}
And the $\alpha$-twisted Mabuchi functional $M_\alpha$ is defined by
\begin{equation}
    M_\alpha(\varphi):=\frac{1}{V}\int_X\log\frac{\omega_\varphi^n}{\omega^n}\omega_\varphi^n-\frac{1}{V}\int_Xf_\alpha\omega_\varphi^n-(I_\omega-J_\omega)(\varphi),\ \varphi\in\mathcal{H}(X,\omega).
\end{equation}

\begin{definition}
The Ding functional (resp. Mabuchi functional) is called coercive if there exist $\varepsilon,C>0$ such that $D_\alpha\geq \varepsilon J_\omega-C$ (resp. $M_\alpha\geq \varepsilon J_\omega-C$) on $\mathcal{H}(X,\omega)$.
\end{definition}

Regarding the energy functionals and the existence of (twisted) K\"ahler--Einstein metrics, there is now a large literature; see e.g., \cite{T97,TZ,PSSW,R08b,Sze11,B13,DR,BBEGZ}. In this article, we will make use of the following well-established result.

\begin{theorem}
\label{thm:D-M-tKE}
We have the following properties.
\begin{enumerate}
    \item $M_\alpha(\varphi)\geq D_\alpha(\varphi)$ for all $\varphi\in\mathcal{H}(X,\omega)$ and the equality holds iff $\Ric(\omega_\varphi)=\omega_\varphi+\alpha$.
    
    \item $D_\alpha$ is bounded below iff $M_\alpha$ is.
    
    \item $D_\alpha$ is coercive iff $M_\alpha$ is, in which case, there exists $\varphi\in\mathcal{H}(X,\omega)$ such that $\Ric(\omega_\varphi)=\omega_\varphi+\alpha$.
    
    \item Assume that $\alpha\geq0$. If there exists some $\varphi\in\mathcal{H}_0(X,\omega)$ such that $\Ric(\omega_\varphi)=\omega_\varphi+\alpha$, then $D_\alpha/M_\alpha$ is bounded below.
    
    \item Assume that $\alpha\geq0$. If there exists a unique $\varphi\in\mathcal{H}_0(X,\omega)$ such that $\Ric(\omega_\varphi)=\omega_\varphi+\alpha$, then $D_\alpha/M_\alpha$ is coercive.

\end{enumerate}

\end{theorem}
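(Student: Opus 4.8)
The plan is to recognize Theorem~\ref{thm:D-M-tKE} as a package of facts from the variational theory of twisted K\"ahler--Einstein metrics: I would prove item~(1) directly by an entropy computation and then read off (2)--(5) from the convexity and compactness properties of the Ding and Mabuchi functionals on the finite-energy space $\mathcal{E}^1(X,\omega)$, invoking \cite{B13,BBEGZ,DR,BBJ18}. For (1), introduce the probability measure $\mu_\varphi:=\frac{e^{f_\alpha-\varphi}\omega^n}{\int_X e^{f_\alpha-\varphi}\omega^n}$. A direct bookkeeping, using only the definitions of $D_\alpha$ and $M_\alpha$, the relation $I_\omega(\varphi)=\frac1V\int_X\varphi(\omega^n-\omega_\varphi^n)$, and $\int_X\omega_\varphi^n=V$, collapses the difference of the two functionals to
\[
M_\alpha(\varphi)-D_\alpha(\varphi)=\frac1V\int_X\log\!\left(\frac{\omega_\varphi^n}{V\mu_\varphi}\right)\omega_\varphi^n ,
\]
which equals the relative entropy of the probability measure $\omega_\varphi^n/V$ with respect to $\mu_\varphi$, hence is $\geq0$ by Jensen's inequality, with equality iff $\omega_\varphi^n/V=\mu_\varphi$, i.e.\ iff $\omega_\varphi^n=c\,e^{f_\alpha-\varphi}\omega^n$ for some constant $c>0$. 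Applying $-\idd\log$ to this last identity and using \eqref{eq:def-f_alpha} turns it into $\Ric(\omega_\varphi)=\omega_\varphi+\alpha$, which proves (1).

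For (2): since $M_\alpha\geq D_\alpha$ by (1), boundedness below of $D_\alpha$ forces that of $M_\alpha$, and the converse is the equality $\inf_{\mathcal H}D_\alpha=\inf_{\mathcal H}M_\alpha$ of \cite{B13} (see also \cite{BBEGZ}), proved there through the thermodynamical duality between the Ding energy and the entropy functional. For (3): coercivity of $D_\alpha$ extends to $\mathcal{E}^1(X,\omega)$ by continuity, and together with the $d_1$-lower semicontinuity and compactness of its sublevel sets it produces a finite-energy minimizer whose Euler--Lagrange equation is the twisted KE equation; pluripotential regularity then makes the minimizer a smooth K\"ahler form, so a tKE metric exists (\cite{BBEGZ,DR}). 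The equivalence of coercivity of $D_\alpha$ and of $M_\alpha$ is part of the same package: one direction is immediate from $M_\alpha\geq D_\alpha$, and \eqref{eq:I-J-lower-upper-J} guarantees that $J_\omega$ and $I_\omega-J_\omega$ are interchangeable in the coercivity statement.

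Items (4)--(5) are where the hypothesis $\alpha\geq0$ is indispensable: positivity of $\alpha$ makes the twisted Ding functional convex along the weak geodesics of $\mathcal{E}^1$ (Berndtsson-type subharmonicity, as used in \cite{BBEGZ,BBJ18}). By (1), a twisted KE potential is a critical point of $D_\alpha$, and geodesic convexity promotes any critical point to a global minimizer; hence $D_\alpha$, and by (2) also $M_\alpha$, is bounded below, which is (4). If the solution is moreover unique, $D_\alpha$ has a unique minimizer, and the Darvas--Rubinstein principle --- geodesic convexity together with uniqueness of the minimizer and properness of $d_1$ on sublevel sets --- yields coercivity of $D_\alpha$, hence of $M_\alpha$ (\cite{DR}, cf.\ \cite{BBJ18}); this is (5).

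I expect the genuine obstacle to be item~(5): the passage from uniqueness of the solution to coercivity of the functional is exactly the Darvas--Rubinstein theorem, and it needs the full $d_1$-geometry of $\mathcal{E}^1$, the geodesic convexity of $D_\alpha$ (which fails without $\alpha\geq0$), and a compactness argument ruling out minimizing sequences that escape to infinity in $d_1$ while hugging the infimum. Item~(4) is then a soft corollary of the same convexity, while (1)--(3) are routine once the finite-energy formalism of \cite{BBEGZ} is available.
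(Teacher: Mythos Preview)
Your proposal is correct and follows essentially the same route as the paper: item~(1) via Jensen's inequality (the paper just says ``follows directly from Jensen's inequality,'' while you make the relative-entropy identity explicit), items~(2)--(3) via the Legendre/thermodynamical duality of \cite{B13,BBEGZ} (the paper points to \cite[Lemma~2.15]{BBJ18} and its own Proposition~\ref{prop:delta-A-Mabuchi}, which is the same mechanism), and items~(4)--(5) via geodesic convexity plus the Darvas--Rubinstein principle (the paper cites \cite{DR} and \cite[Theorem~2.19]{BBJ18}). The only point worth tightening is the reverse direction in~(3): your sentence ``part of the same package'' is correct but vague --- the implication ``$M_\alpha$ coercive $\Rightarrow$ $D_\alpha$ coercive'' does not follow from $M_\alpha\geq D_\alpha$ and is exactly where the Legendre-transform argument (as in the proof of Proposition~\ref{prop:delta-A-Mabuchi}) is needed, so you should say so explicitly rather than leaving it implicit.
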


In the last two items, the assumption $\alpha\geq0$ guarantees the convexity of $D_\alpha$ and $M_\alpha$ along weak geodesics in the larger $\mathcal{E}^1$ space (cf. \cite{B15,BB17,BDL}).

\begin{proof}
$(1)$ follows directly from Jensen's inequality. (2) and (3) can be proved using the viewpoint of Legendre transform; see \cite[Lemma 2.15]{BBJ18} and also see the proof of Proposition \ref{prop:delta-A-Mabuchi}. (4) and (5) essentially follow from the variational principle of Darvas--Rubinstein \cite{DR}; see \cite[Theorem 2.19]{BBJ18} for a general statement that implies (4) and (5).
\end{proof}

\section{Analytic $\delta$-invariant}
\label{sec:delta-A}

Based on the spirit of Ding \cite{D}, we define an analytic $\delta$-invariant in terms of the optimal Moser--Trudinger constant, which resembles very much to Tian's formulation of his $\alpha$-invariant \cite{T87}. An advantage of this definition is that no polarization is needed. Moreover, as we will see, this analytic $\delta$-invariant is naturally related to the greatest Ricci lower bound. In the literature this analytic invariant has been implicitly studied by many authors; see for instance \cite{Sze11,B13,SW} for related discussions.

Let $X$ be a compact K\"ahler manifold of dimension $n$, whose K\"ahler cone will be denoted by $\mathcal{K}(X)$. Let $\xi\in\mathcal{K}(X)$ be a K\"ahler class and fix some K\"ahler form $\omega\in 2\pi c_1(\xi)$. Let $\mathcal{H}_0(X,\omega)$ denotes the space of normalized K\"ahler potentials of $\omega$ (see \eqref{eq:normalized-kahler-potential}).

\begin{definition}
The analytic $\delta$-invariant $\delta^A(\xi)$ of the K\"ahler class $\xi$ is defined as
\begin{equation}
    \delta^A(\xi):=\sup\bigg\{\lambda>0\ \bigg|\ 
    \exists\ C_\lambda>0\text{ s.t. }
    \int_Xe^{-\lambda\varphi}\omega^n\leq C_\lambda e^{\lambda J_\omega(\varphi)},\ 
    \forall\varphi\in\mathcal{H}_0(X,\omega)
    \bigg\}.
\end{equation}
\end{definition}

Note that
$\delta^A(\xi)$ is clearly bounded from below by the $\alpha$-invariant of Tian (see \eqref{eq:def-alpha}), and the definition does not depend on the choice of $\omega$.
When $\xi=c_1(L)$ is polarized by some ample $\QQ$-line bundle $L$, we also write
\begin{equation}
    \delta^A(L):=\delta^A(\xi).
\end{equation}
Observe that, $\delta^A(\cdot)$ satisfies the following scaling property: for any $\lambda>0$,
\begin{equation}
\label{eq:scaling-delt-A}
 \delta^A(\lambda\xi)=\lambda^{-1}\delta^A(\xi).
\end{equation}

The formulation of $\delta^A(\xi)$ is easily seen to be equivalent to the coercivity of certain twisted Ding (and hence Mabuchi) energy.
\begin{proposition}
\label{prop:delta-A-Mabuchi}
We have
$$
\delta^A(\xi)=\sup\bigg\{\lambda>0\bigg|
    \exists\ C_\lambda>0\text{ s.t. }
    \frac{1}{V}\int_X\log\frac{\omega_\varphi^n}{\omega^n}\omega_\varphi^n\geq\lambda(I_\omega-J_\omega)(\varphi)-C_\lambda,
    \forall\varphi\in\mathcal{H}(X,\omega)
    \bigg\}.
$$
\end{proposition}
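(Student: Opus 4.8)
The plan is to rewrite the coercivity statement for the twisted Mabuchi functional as the coercivity of the twisted Ding functional, and then to unwind the latter into the Moser--Trudinger-type inequality that defines $\delta^A(\xi)$. The bridge between Ding and Mabuchi is already quoted in the excerpt: by Theorem \ref{thm:D-M-tKE}(2)--(3), $D_\alpha$ is bounded below (resp. coercive) iff $M_\alpha$ is; more precisely, the argument via the Legendre transform in \cite[Lemma 2.15]{BBJ18} shows that $D_\alpha$ and $M_\alpha$ have the \emph{same} coercivity constants, in the sense that $D_\alpha \geq \varepsilon J_\omega - C$ on $\mathcal H(X,\omega)$ for some $C$ iff $M_\alpha \geq \varepsilon J_\omega - C'$ for some $C'$. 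So I first observe that the right-hand side of the claimed identity is exactly $\sup\{\lambda > 0 \mid M_\alpha/\lambda \text{ is ``bounded below up to $(I_\omega - J_\omega)$ at level $1$''}\}$; one should be a little careful that the quantity appearing there, $\frac 1V\int_X \log\frac{\omega_\varphi^n}{\omega^n}\omega_\varphi^n - \lambda(I_\omega - J_\omega)(\varphi)$, is precisely $M_\alpha(\varphi)$ with $\alpha$ replaced by a form in $2\pi c_1(X) - \lambda[\omega]$ when $\lambda < 1$, up to a bounded error coming from the $\int f_\alpha\,\omega_\varphi^n$ term; but in fact for the sup it is cleaner to work directly with the displayed functional rather than reintroducing a twist, since $\frac 1V\int_X f_\alpha\,\omega_\varphi^n$ is bounded independently of $\varphi$ (as $f_\alpha$ is a fixed smooth function), and may be absorbed into $C_\lambda$.

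Next I would handle the Ding side directly, which is the more transparent computation. For $\varphi \in \mathcal H(X,\omega)$, the (untwisted, $\alpha = 0$) Ding functional is $D(\varphi) = J_\omega(\varphi) - \frac 1V\int_X\varphi\,\omega^n - \log\big(\frac 1V\int_X e^{-\varphi}\omega^n\big)$ up to the fixed bounded shift by $f_\alpha$. The key point is that the Moser--Trudinger inequality $\int_X e^{-\lambda\varphi}\omega^n \leq C_\lambda e^{\lambda J_\omega(\varphi)}$ for all $\varphi \in \mathcal H_0(X,\omega)$, i.e. for $\varphi$ with $\sup_X\varphi = 0$, rearranges to $\log\big(\frac 1V\int_X e^{-\lambda\varphi}\omega^n\big) \leq \lambda J_\omega(\varphi) + \log(C_\lambda/V)$. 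Since for normalized potentials $\sup_X\varphi = 0$ one has $-C_\omega \leq \int_X\varphi\,\omega^n \leq 0$ by \eqref{eq:int-vp>-C}, the term $-\frac 1V\int_X\varphi\,\omega^n$ is bounded in $[0, C_\omega/V]$ and harmless; and the scaling substitution $\varphi \mapsto \lambda\varphi$ together with Ding's inequality \eqref{eq:ding-ineq} (which controls $J_\omega(\lambda\varphi)$ by $J_\omega(\varphi)$ for $\lambda \in (0,1)$) lets one pass between ``$\lambda J_\omega(\varphi)$'' and ``$J_\omega(\lambda\varphi)$'' at the cost of a constant. Running this in both directions shows that the MT inequality at exponent $\lambda$ is equivalent to $D(\lambda\varphi) \geq -C$ for all $\varphi \in \mathcal H_0(X,\omega)$, which by homogeneity/translation invariance of $D$ and Theorem \ref{thm:D-M-tKE}(3) (or rather its proof) is equivalent to coercivity of $D_0$ with slope governed by $\lambda$; one then feeds this into the Ding $\Leftrightarrow$ Mabuchi equivalence above. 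Taking the supremum over admissible $\lambda$ on each side gives the asserted equality.

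The main obstacle, I expect, is bookkeeping rather than anything deep: one must verify that all the passages --- between $\mathcal H_0$ and $\mathcal H$ (using translation invariance of $J_\omega(\varphi) - \frac 1V\int\varphi\,\omega^n$ under $\varphi \mapsto \varphi + c$), between $\lambda J_\omega(\varphi)$ and $J_\omega(\lambda\varphi)$ (using \eqref{eq:ding-ineq} and \eqref{eq:I-J-lower-upper-J}), and between the $J_\omega$-coercivity slope and the exponent $\lambda$ --- all preserve the supremum, i.e. that no loss in the exponent is incurred in the limit. The cleanest way to avoid losing a constant factor in $\lambda$ is to work with the scaled potentials $\psi = \lambda\varphi$ from the start and to note that $\{\lambda\varphi : \varphi \in \mathcal H_0(X,\omega)\}$ exhausts, up to additive constants, all of $\mathcal H(X,\omega)$ as $\varphi$ ranges over $\mathcal H_0$; then the MT inequality at exponent $\lambda$ literally becomes $\frac 1V\int_X e^{-\psi}\omega^n \leq C_\lambda' e^{J_\omega(\psi)/\lambda \cdot \lambda} \cdots$ — and here one uses \eqref{eq:ding-ineq} in the form $\lambda^{(n+1)/n} J_\omega(\varphi) \leq J_\omega(\lambda\varphi)$ to get the clean comparison. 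I would also cite the proof of Proposition \ref{prop:delta-A-Mabuchi} forward-referenced in the excerpt as ``see the proof of Proposition \ref{prop:delta-A-Mabuchi}'' in Theorem \ref{thm:D-M-tKE}(2), confirming this is exactly the Legendre-transform computation of Berman--Boucksom--Jonsson, so the Ding--Mabuchi half is genuinely routine once the Ding half is in place.
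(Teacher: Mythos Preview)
Your proposal has a genuine gap: the route through rescaling $\varphi\mapsto\lambda\varphi$ together with Ding's inequality \eqref{eq:ding-ineq} does \emph{not} preserve the exponent $\lambda$, so it cannot yield an equality of suprema. Concretely, two of your steps fail. First, the claim that $\{\lambda\varphi:\varphi\in\mathcal H_0(X,\omega)\}$ exhausts $\mathcal H(X,\omega)$ up to constants is false for $\lambda<1$: if $\psi\in\mathcal H_0(X,\omega)$ with $\omega_\psi$ close to degenerate, then $\psi/\lambda$ need not lie in $\mathcal H(X,\omega)$. Second, Ding's inequality only gives $\lambda^{n+1}J_\omega(\varphi)\le J_\omega(\lambda\varphi)\le\lambda^{(n+1)/n}J_\omega(\varphi)$, so passing between $\lambda J_\omega(\varphi)$ and $J_\omega(\lambda\varphi)$ incurs a power of $\lambda$ different from $1$; you would end up showing something like $\delta^A(\xi)\ge c_n\,\delta'(\xi)$ with $c_n<1$, not equality. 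The ``bookkeeping'' you anticipate is not harmless: it is exactly where the argument breaks. There is also a mild circularity risk in invoking Theorem~\ref{thm:D-M-tKE}(2)--(3), since the paper's proof of that item points back to the proof of the present proposition.

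The paper's argument avoids all of this by carrying out the Legendre duality \emph{directly} at level $\lambda$, without rescaling potentials. For the direction $\delta'(\xi)\le\delta^A(\xi)$ (entropy bound $\Rightarrow$ Moser--Trudinger), given $\varphi\in\mathcal H_0$ one solves the Calabi--Yau equation $\omega_{\phi_\varphi}^n=e^{c-\lambda\varphi}\omega^n$ for $\phi_\varphi\in\mathcal H_0$; then $\log\frac1V\int_Xe^{-\lambda\varphi}\omega^n=-\frac{\lambda}{V}\int_X\varphi\,\omega_{\phi_\varphi}^n-\frac1V\int_X\log\frac{\omega_{\phi_\varphi}^n}{\omega^n}\,\omega_{\phi_\varphi}^n$, and one plugs in the entropy bound for $\phi_\varphi$ together with the cocycle inequality \eqref{eq:J-cocycle} and \eqref{eq:int-vp>-C} to obtain $\le\lambda J_\omega(\varphi)+C$ with the \emph{same} $\lambda$. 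For the reverse direction $\delta^A(\xi)\le\delta'(\xi)$, one writes $\log\frac1V\int_Xe^{-\lambda\varphi}\omega^n=\log\frac1V\int_Xe^{-\lambda\varphi}\frac{\omega^n}{\omega_\varphi^n}\,\omega_\varphi^n$ and applies Jensen's inequality with respect to $\omega_\varphi^n/V$, which immediately gives the entropy lower bound with the same $\lambda$. No scaling of potentials and no use of \eqref{eq:ding-ineq} is needed; the Calabi--Yau step is the concrete incarnation of the Legendre transform you were reaching for.
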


\begin{proof}
This is essentially \cite[Proposition 4.11]{BBEGZ}. Denote the right hand side by $\delta^\prime(\xi)$.

We first show $\delta^A(
\xi)\geq\delta^\prime(\xi)$. Fix any $\lambda\in(0,\delta^\prime(\xi))$. By Calabi-Yau theorem, there is a unique $\phi_\varphi\in\mathcal{H}_0(X,\omega)$ associated to each $\varphi\in\mathcal{H}_0(X,\omega)$ such that $\omega^n_{\phi_\varphi}=e^{c-\lambda\varphi}\omega^n$ for some normalizing constant $c\in\RR$. Then we have
\begin{equation*}
    \begin{split}
        \log\bigg(\frac{1}{V}\int_Xe^{-\lambda\varphi}\omega^n\bigg)&=\log\bigg(\frac{1}{V}\int_Xe^{-\lambda\varphi}\frac{\omega^n}{\omega^n_{\phi_\varphi}}\omega^n_{\phi_\varphi}\bigg)\\
        &=\frac{-\lambda}{V}\int_X\varphi\omega^n_{\phi_\varphi}-\frac{1}{V}\int_X\log\frac{\omega^n_{\phi_\varphi}}{\omega^n}\omega^n_{\phi_\varphi}\\
        &\leq C_\lambda-\lambda(I_\omega-J_\omega)(\phi_\varphi)-\frac{\lambda}{V}\int_X\varphi\omega^n_{\phi_\varphi}\\
        &\leq C^\prime_\lambda+\lambda J_\omega(\varphi),\ \forall\varphi\in\mathcal{H}_0(X,\omega).\\
    \end{split}
\end{equation*}
In the last inequality we used \eqref{eq:J-cocycle} and \eqref{eq:int-vp>-C}. Thus we have $\delta^A(\xi)\geq\delta^\prime(\xi)$.

Now we show $\delta^A(\xi)\leq\delta^\prime(\xi)$, which is an easy consequence of Jensen's inequality. Indeed, Fix any $\lambda\in(0,\delta^A(\xi))$. Then there exists $C_\lambda>0$ such that
\begin{equation*}
    \begin{split}
        C_\lambda+\lambda J_\omega(\varphi)&\geq\log\bigg(\frac{1}{V}\int_Xe^{-\lambda\varphi}\omega^n\bigg)\\
        &=\log\bigg(\frac{1}{V}\int_Xe^{-\lambda\varphi}\frac{\omega^n}{\omega_\varphi^n}\omega^n_\varphi\bigg)\\
        &\geq-\frac{\lambda}{V}\int_X\varphi\omega^n_\varphi-\frac{1}{V}\int_X\log\frac{\omega^n_\varphi}{\omega^n}\omega^n_\varphi,\ \forall\varphi\in\mathcal{H}_0(X,\omega).\\
    \end{split}
\end{equation*}
So by \eqref{eq:int-vp>-C},
$$
\frac{1}{V}\int_X\log\frac{\omega^n_\varphi}{\omega^n}\omega^n_\varphi\geq\lambda(I_\omega-J_\omega)(\varphi)-C^\prime_\lambda,\ \forall\varphi\in\mathcal{H}_0(X,\omega).
$$
Thus $\delta^\prime(\xi)\geq\delta^A(\xi)$ is proved.
\end{proof}

Now the next result is clear.
\begin{proposition}
\label{prop:delta-A>1=proper}
The following are equivalent.
\begin{enumerate}
    \item $\delta^A(\xi)>1$.
    \item For any smooth form $\alpha\in 2\pi (c_1(X)-\xi)$, $D_\alpha$ is coercive.
    \item For any smooth form $\alpha\in 2\pi (c_1(X)-\xi)$, $M_\alpha$ is coercive.
\end{enumerate}
\end{proposition}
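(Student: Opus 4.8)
The plan is to prove Proposition~\ref{prop:delta-A>1=proper} by combining the Legendre-transform comparison between the Ding functional and the Moser--Trudinger functional with the equivalences in Theorem~\ref{thm:D-M-tKE}. The equivalence of (2) and (3) is immediate from Theorem~\ref{thm:D-M-tKE}(3), so the real content is the equivalence of (1) with (2). I would fix a reference K\"ahler form $\omega\in 2\pi\xi$ and a smooth $\alpha\in 2\pi(c_1(X)-\xi)$ with normalized Ricci potential $f_\alpha$ as in \eqref{eq:def-f_alpha}.

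First I would prove $(1)\Rightarrow(2)$. Assume $\delta^A(\xi)>1$ and pick $\lambda\in(1,\delta^A(\xi))$, so there is $C_\lambda>0$ with $\int_X e^{-\lambda\varphi}\omega^n\leq C_\lambda e^{\lambda J_\omega(\varphi)}$ for all $\varphi\in\mathcal H_0(X,\omega)$. For a general $\varphi\in\mathcal H(X,\omega)$ write $\varphi=\psi+\sup_X\varphi$ with $\psi\in\mathcal H_0(X,\omega)$; then using that $J_\omega$ differs from its sup-normalized version by a controlled amount and that $\tfrac1V\int_X\varphi\,\omega^n$ absorbs the $\sup_X\varphi$ shift, one reduces to normalized potentials. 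Since $e^{f_\alpha}$ is bounded above by some constant depending only on $\alpha$, one gets $\log\big(\tfrac1V\int_X e^{f_\alpha-\varphi}\omega^n\big)\leq \log\big(\tfrac1V\int_X e^{-\varphi}\omega^n\big)+O(1)$. Then by H\"older (or Jensen) applied to the exponent $\lambda$, one bounds $\tfrac1V\int_X e^{-\varphi}\omega^n$ by $\big(\tfrac1V\int_X e^{-\lambda\varphi}\omega^n\big)^{1/\lambda}$ times a power of the total mass, hence
\[
\log\Big(\frac1V\int_X e^{f_\alpha-\varphi}\omega^n\Big)\leq J_\omega(\varphi)-\frac1V\int_X\varphi\,\omega^n -\Big(1-\tfrac1\lambda\Big)\Big(J_\omega(\varphi)-\tfrac1V\int_X\varphi\,\omega^n\Big)+C',
\]
which rearranges to $D_\alpha(\varphi)\geq (1-\tfrac1\lambda)\big(J_\omega(\varphi)-\tfrac1V\int_X\varphi\,\omega^n\big)-C'$. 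Finally $J_\omega(\varphi)-\tfrac1V\int_X\varphi\,\omega^n$ dominates a fixed positive multiple of $J_\omega(\varphi)$ (combine \eqref{eq:I-J-lower-upper-J} with the elementary bound $I_\omega-J_\omega\le J_\omega-\tfrac1V\int\varphi\,\omega^n\le$ something, or argue directly on normalized potentials where $-\tfrac1V\int\varphi\,\omega^n\ge 0$), yielding coercivity $D_\alpha\geq\varepsilon J_\omega-C$.

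For $(2)\Rightarrow(1)$ I would run the reverse estimate: given $D_\alpha\geq\varepsilon J_\omega-C$ on $\mathcal H(X,\omega)$, restrict to $\varphi\in\mathcal H_0(X,\omega)$, where $-\tfrac1V\int_X\varphi\,\omega^n\geq0$ is bounded by $C_\omega$ from \eqref{eq:int-vp>-C}, and $e^{f_\alpha}$ is bounded below away from zero; this gives $\log\big(\tfrac1V\int_X e^{-\varphi}\omega^n\big)\leq J_\omega(\varphi)+C_\omega-\varepsilon J_\omega(\varphi)+C=(1-\varepsilon)J_\omega(\varphi)+C''$, i.e. the Moser--Trudinger inequality holds with exponent $\lambda=1$ and, after a scaling argument using \eqref{eq:scaling-delt-A} and \eqref{eq:ding-ineq} (Ding's inequality lets one trade a slightly larger exponent for a worse constant), with some exponent strictly bigger than $1$; hence $\delta^A(\xi)>1$. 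The main obstacle I expect is the bookkeeping in passing between $\mathcal H$ and $\mathcal H_0$ and quantifying how the Moser--Trudinger exponent and the coercivity slope $\varepsilon$ trade off — that is, showing that a strict inequality on one side forces a strict inequality (not just a non-strict one) on the other; this is where Ding's inequality \eqref{eq:ding-ineq} and the Legendre-transform/duality viewpoint of \cite[Lemma 2.15]{BBJ18} do the essential work, and I would lean on them rather than redo the convex analysis by hand.
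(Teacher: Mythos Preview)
Your H\"older step in $(1)\Rightarrow(2)$ does not yield the displayed inequality. From H\"older one has $\tfrac1V\int_X e^{-\varphi}\omega^n\le\big(\tfrac1V\int_X e^{-\lambda\varphi}\omega^n\big)^{1/\lambda}$, and plugging in the Moser--Trudinger bound at level $\lambda$ gives
\[
\log\Big(\tfrac1V\int_X e^{-\varphi}\omega^n\Big)\ \le\ \tfrac1\lambda\big(\lambda J_\omega(\varphi)+C\big)\ =\ J_\omega(\varphi)+C/\lambda,
\]
\emph{not} $\tfrac1\lambda\big(J_\omega(\varphi)-\tfrac1V\int_X\varphi\,\omega^n\big)+C'$: the $1/\lambda$ from H\"older cancels the $\lambda$ in the MT exponent. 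Substituting back into $D_\alpha$ you only get $D_\alpha\ge -C''$, i.e.\ boundedness, not coercivity. (There \emph{is} a direct salvage: apply MT at level $\lambda$ to $\varphi/\lambda\in\mathcal H_0(X,\omega)$ and use Ding's inequality \eqref{eq:ding-ineq} to get $\lambda J_\omega(\varphi/\lambda)\le\lambda^{-1/n}J_\omega(\varphi)$, hence $\log(\tfrac1V\int e^{-\varphi})\le\lambda^{-1/n}J_\omega(\varphi)+C$ with $\lambda^{-1/n}<1$; but that is not the argument you wrote.)

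The converse $(2)\Rightarrow(1)$ has the mirror problem. Coercivity of $D_\alpha$ gives $\log(\tfrac1V\int_X e^{-\varphi}\omega^n)\le(1-\varepsilon)J_\omega(\varphi)+C$ on $\mathcal H_0$, which is the MT inequality \emph{at exponent $1$ with slack}, not the inequality at some exponent $\lambda>1$ appearing in the definition of $\delta^A$. Your proposed ``scaling $+$ Ding'' bridge is not carried out, and there is no straightforward way to make it work: for $\lambda>1$ the function $\lambda\varphi$ need not lie in $\mathcal H(X,\omega)$, and rescaling the class via \eqref{eq:scaling-delt-A} moves $\delta^A$ in the wrong direction.

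The paper avoids both difficulties by going through Proposition~\ref{prop:delta-A-Mabuchi}: that proposition already packages the Legendre-transform step and rewrites $\delta^A(\xi)$ as the optimal $\lambda$ for which the entropy satisfies $H(\varphi)\ge\lambda(I_\omega-J_\omega)(\varphi)-C_\lambda$. Since $M_\alpha=H-\tfrac1V\int f_\alpha\,\omega_\varphi^n-(I_\omega-J_\omega)$ and $f_\alpha$ is bounded, one sees immediately that $\delta^A(\xi)>1$ is equivalent to coercivity of $M_\alpha$, i.e.\ $(1)\Leftrightarrow(3)$; and $(2)\Leftrightarrow(3)$ is Theorem~\ref{thm:D-M-tKE}(3). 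So the route is through the Mabuchi side, not the Ding side.
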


\begin{proof}
For any smooth form $\alpha\in 2\pi (c_1(X)-\xi)$, the normalized Ricci potential $f_\alpha$ is a bounded function (recall \eqref{eq:def-f_alpha}). So by Proposition \ref{prop:delta-A-Mabuchi}, $\delta^A(\xi)>1$ is equivalent to $M_\alpha$ (and hence $D_\alpha$) being coercive.
\end{proof}
The following result is an analytic version of Theorem \ref{thm:YTD}.
\begin{proposition} 
\label{prop:YTD-delta-A}
The analytic $\delta$-invariant has the following properties.
\begin{enumerate}
    \item Assume that $\delta^A(\xi)>1$, then for any $\alpha\in 2\pi (c_1(X)-\xi)$, there exists $\omega\in2\pi\xi$ such that
    $\Ric(\omega)=\omega+\alpha$.
    
    \item Assume that there exists an semi-positive smooth form $\alpha\in 2\pi (c_1(X)-\xi)$. If there exists $\omega\in2\pi\xi$ (resp. a unique $\omega\in2\pi\xi$) such that
    $\Ric(\omega)=\omega+\alpha$, then $\delta^A(\xi)\geq1$ (resp. $\delta^A(\xi)>1$).
    
\end{enumerate}

\end{proposition}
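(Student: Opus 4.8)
The plan is to deduce both parts of Proposition \ref{prop:YTD-delta-A} from the combination of Theorem \ref{thm:D-M-tKE} and the characterizations of $\delta^A(\xi)$ established in Proposition \ref{prop:delta-A-Mabuchi} and Proposition \ref{prop:delta-A>1=proper}. The point is that $\delta^A$ is, by design, the exact analytic threshold that detects coercivity of the twisted Ding/Mabuchi functionals, so the statement should fall out almost formally once the dictionary between these notions is in place.

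For part (1): assume $\delta^A(\xi)>1$. Fix any smooth $\alpha\in 2\pi(c_1(X)-\xi)$ and pick a background K\"ahler form $\omega_0\in 2\pi\xi$. By Proposition \ref{prop:delta-A>1=proper}, the twisted Ding functional $D_\alpha$ (equivalently $M_\alpha$) is coercive on $\mathcal{H}(X,\omega_0)$. Then Theorem \ref{thm:D-M-tKE}(3) gives $\varphi\in\mathcal{H}(X,\omega_0)$ with $\Ric(\omega_{0,\varphi})=\omega_{0,\varphi}+\alpha$; setting $\omega:=\omega_{0,\varphi}\in 2\pi\xi$ finishes this part. Note the twist $\alpha$ here need \emph{not} be semi-positive, exactly because $\delta^A(\xi)>1$ forces the stronger \emph{coercivity} conclusion of Theorem \ref{thm:D-M-tKE}(3) rather than merely boundedness below; this is why part (1) is stated without any positivity hypothesis on $\alpha$.

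For part (2): now suppose there is a semi-positive smooth $\alpha\in 2\pi(c_1(X)-\xi)$, and suppose $\omega=\omega_{0,\varphi}\in 2\pi\xi$ solves $\Ric(\omega)=\omega+\alpha$; after subtracting a constant we may take $\varphi\in\mathcal{H}_0(X,\omega_0)$. Since $\alpha\geq 0$, Theorem \ref{thm:D-M-tKE}(4) applies and gives that $D_\alpha$ (equivalently $M_\alpha$, by Theorem \ref{thm:D-M-tKE}(1)--(2)) is bounded below on $\mathcal{H}(X,\omega_0)$. Unwinding the definition of $M_\alpha$ and using that $f_\alpha$ is a bounded function, this says precisely that
$$
\frac{1}{V}\int_X\log\frac{\omega_\varphi^n}{\omega_0^n}\,\omega_\varphi^n\geq (I_{\omega_0}-J_{\omega_0})(\varphi)-C,\qquad \forall\varphi\in\mathcal{H}(X,\omega_0),
$$
so by Proposition \ref{prop:delta-A-Mabuchi} we get $\delta^A(\xi)\geq 1$. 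If moreover the solution is unique, Theorem \ref{thm:D-M-tKE}(5) upgrades ``bounded below'' to ``coercive'', i.e.\ the above inequality holds with $(I_{\omega_0}-J_{\omega_0})(\varphi)$ replaced by $(1+\varepsilon)(I_{\omega_0}-J_{\omega_0})(\varphi)-\varepsilon' J_{\omega_0}(\varphi)$ for some $\varepsilon,\varepsilon'>0$, which after absorbing the $J$-term using \eqref{eq:I-J-lower-upper-J} yields the Mabuchi-coercivity estimate of Proposition \ref{prop:delta-A-Mabuchi} with exponent $\lambda=1+\varepsilon''>1$; hence $\delta^A(\xi)>1$.

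The only real subtlety — and the step I expect to need the most care — is translating ``$M_\alpha$ bounded below / coercive'' into the precise inequality appearing in Proposition \ref{prop:delta-A-Mabuchi}, since $M_\alpha$ carries the extra term $-\frac{1}{V}\int_X f_\alpha\,\omega_\varphi^n$ and a coercivity statement is phrased against $J_{\omega_0}$ rather than against $I_{\omega_0}-J_{\omega_0}$. Both of these are harmless: $f_\alpha$ is bounded so its contribution is an $O(1)$ error, and $J_{\omega_0}$ and $I_{\omega_0}-J_{\omega_0}$ are comparable by \eqref{eq:I-J-lower-upper-J}, so a coercivity constant $\varepsilon$ against $J_{\omega_0}$ converts to a strictly-larger-than-one exponent against $I_{\omega_0}-J_{\omega_0}$ at the cost of shrinking $\varepsilon$. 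With that bookkeeping done, the proposition is immediate.
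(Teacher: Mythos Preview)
Your argument is correct and follows essentially the same route as the paper: both parts are deduced from Theorem~\ref{thm:D-M-tKE} together with Propositions~\ref{prop:delta-A-Mabuchi} and~\ref{prop:delta-A>1=proper}. One minor simplification: in the uniqueness case you unwind coercivity of $M_\alpha$ by hand (and your intermediate expression $(1+\varepsilon)(I-J)-\varepsilon'J$ is slightly garbled; what actually falls out of $M_\alpha\geq\varepsilon J-C$ is $H\geq(I-J)+\varepsilon J-C'$, which then gives $H\geq(1+\varepsilon/n)(I-J)-C'$ via \eqref{eq:I-J-lower-upper-J}), whereas the paper simply invokes the equivalence in Proposition~\ref{prop:delta-A>1=proper} to pass directly from ``$M_\alpha$ coercive'' to $\delta^A(\xi)>1$.
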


\begin{proof}
Part $(1)$ follows directly from Proposition \ref{prop:delta-A>1=proper} and Theorem \ref{thm:D-M-tKE}.(3). For part $(2)$, the existence of a tKE solution implies that $M_\alpha$ is bounded from below by Theorem \ref{thm:D-M-tKE}.(4), which implies that $\delta^A(\xi)\geq1$ in the view of Proposition \ref{prop:delta-A-Mabuchi}. If this solution is unique, the $M_\alpha$ is further coercive by Theorem \ref{thm:D-M-tKE}.(5), so $\delta^A(\xi)>1$ by Proposition \ref{prop:delta-A>1=proper}.
\end{proof}

Now we show that $\delta^A(\xi)$ captures the greatest Ricci lower bound $\beta(\xi)$ of the K\"ahler class $\xi$. Recall here that $s(\xi)$ defined in \eqref{eq:def-S-xi} denotes the K\"ahler threshold.
\begin{proposition}
\label{prop:beta-xi=s-delta-A}
For any K\"ahler class $\xi$, we have
\begin{equation*}
    \beta(\xi)=\min\{s(\xi),\delta^A(\xi)\}.
\end{equation*}
\end{proposition}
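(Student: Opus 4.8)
The plan is to prove the two inequalities $\beta(\xi)\le\min\{s(\xi),\delta^A(\xi)\}$ and $\beta(\xi)\ge\min\{s(\xi),\delta^A(\xi)\}$ separately, fixing once and for all a reference K\"ahler form $\omega\in2\pi\xi$ with volume $V$. The bound $\beta(\xi)\le s(\xi)$ is already noted in the excerpt, so the first substantive task is $\beta(\xi)\le\delta^A(\xi)$. Suppose $\Ric(\omega_\varphi)\ge\beta\omega_\varphi$ for some $\varphi$ and some $\beta>0$; I would like to conclude $\beta\le\delta^A(\xi)$, equivalently (by Proposition \ref{prop:delta-A-Mabuchi}) that for every $\lambda<\beta$ the entropy bound $\frac1V\int_X\log\frac{\omega_\psi^n}{\omega^n}\omega_\psi^n\ge\lambda(I_\omega-J_\omega)(\psi)-C_\lambda$ holds on $\mathcal H(X,\omega)$. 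The natural route is the continuity/openness-of-positivity method of Sz\'ekelyhidi: the condition $\Ric(\omega_\varphi)\ge\beta\omega_\varphi$ means $c_1(X)-\beta\xi$ contains the semipositive form $\theta:=\Ric(\omega_\varphi)-\beta\omega_\varphi$, and one runs Aubin's continuity path $\Ric(\omega_t)=t\omega_t+(1-t)\omega_\varphi+\theta'$ for suitable $\theta'\ge0$ in $2\pi(c_1(X)-\xi)$; solvability up to $t=\lambda$ for every $\lambda<\beta$ (this is exactly where $\alpha\ge0$ resp.\ semipositivity is used to get convexity) yields coercivity of the corresponding twisted Mabuchi functional with slope $\lambda$, hence the entropy estimate and $\delta^A(\xi)\ge\lambda$; letting $\lambda\uparrow\beta$ gives $\delta^A(\xi)\ge\beta$, and taking sup over $\beta$ gives $\delta^A(\xi)\ge\beta(\xi)$.

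For the reverse inequality $\beta(\xi)\ge\min\{s(\xi),\delta^A(\xi)\}$, set $\beta_0:=\min\{s(\xi),\delta^A(\xi)\}$ and fix any $\beta<\beta_0$. Since $\beta<s(\xi)$, the class $c_1(X)-\beta\xi$ is K\"ahler, so I may pick a semipositive (indeed K\"ahler) form $\alpha_\beta\in2\pi(c_1(X)-\beta\xi)$; after rescaling, $\alpha_\beta/\beta\in2\pi(c_1(X)/\beta-\xi)$ and one is led to the twisted equation $\Ric(\omega)=\beta\omega+\alpha_\beta$. Because $\beta<\delta^A(\xi)$, Proposition \ref{prop:delta-A-Mabuchi} gives an entropy bound with slope strictly bigger than $\beta$; combined with $I_\omega-J_\omega\ge\frac1{n+1}J_\omega$ (from \eqref{eq:I-J-lower-upper-J}), this forces the $\beta$-twisted Mabuchi functional $M_{\alpha_\beta/\beta,\beta}$ — or rather the appropriately normalized Ding/Mabuchi functional governing $\Ric(\omega)=\beta\omega+\alpha_\beta$ — to be coercive. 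By the analogue of Theorem \ref{thm:D-M-tKE}.(3) for the $\beta$-twisted equation (standard, and obtained from the stated version after dividing through by $\beta$), there is a solution $\omega_\varphi$ with $\Ric(\omega_\varphi)=\beta\omega_\varphi+\alpha_\beta$, and since $\alpha_\beta\ge0$ this gives $\Ric(\omega_\varphi)\ge\beta\omega_\varphi$, i.e.\ $\beta\le\beta(\xi)$. Letting $\beta\uparrow\beta_0$ completes the argument.

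I expect the main obstacle to be the first inequality $\beta(\xi)\le\delta^A(\xi)$: turning the differential inequality $\Ric(\omega_\varphi)\ge\beta\omega_\varphi$ into a \emph{coercivity} (not merely boundedness) statement for the twisted energy requires the openness of the solvable set along the continuity method, which in the transcendental setting rests on the convexity of twisted functionals along weak geodesics in $\mathcal E^1$ (the references \cite{B15,BB17,BDL} cited after Theorem \ref{thm:D-M-tKE}) together with the a priori estimates of \cite{BBEGZ}; one must be careful that the twist forms appearing are genuinely semipositive so that these convexity results apply. A clean alternative, which I would actually prefer in order to keep the exposition self-contained, is to cite \cite[Proposition 4.11]{BBEGZ} (already invoked in the proof of Proposition \ref{prop:delta-A-Mabuchi}) directly in the form: coercivity of the $\lambda$-twisted Mabuchi functional for all $\lambda<\beta$ is equivalent to solvability of the continuity path past every such $\lambda$, which is in turn equivalent to $\Ric(\omega_\varphi)\ge\beta\omega_\varphi$ being achievable; this reduces both inequalities to bookkeeping with $s(\xi)$ and Proposition \ref{prop:delta-A-Mabuchi}. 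Either way, the two secondary points to check carefully are the rescaling $\xi\leftrightarrow\xi/\beta$ (using the scaling property \eqref{eq:scaling-delt-A} and $s(\lambda\xi)=\lambda^{-1}s(\xi)$) and the semipositivity of the twist, which is exactly what $\beta<s(\xi)$ buys.
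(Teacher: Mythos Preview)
Your overall plan (split into two inequalities, rescale via \eqref{eq:scaling-delt-A}) is correct and your lower bound argument is essentially the paper's, but you are working much harder than necessary for $\beta(\xi)\le\delta^A(\xi)$. The paper simply applies Proposition~\ref{prop:YTD-delta-A}, proved immediately before: if $0<\beta<\beta(\xi)$ then some $\omega\in2\pi\xi$ satisfies $\Ric(\omega)\ge\beta\omega$, so $\alpha:=\Ric(\omega)-\beta\omega$ is a smooth semipositive form in $2\pi(c_1(X)-\beta\xi)$ and $\omega$ \emph{already solves} the tKE equation for the rescaled class $\beta\xi$; part~(2) of Proposition~\ref{prop:YTD-delta-A} then gives $\delta^A(\beta\xi)\ge1$, i.e.\ $\delta^A(\xi)\ge\beta$. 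No continuity method, no openness, no passage through coercivity is needed---the tKE metric is handed to you, and mere boundedness of the twisted Mabuchi energy (Theorem~\ref{thm:D-M-tKE}(4)) suffices. Your proposed Aubin path also has a cohomology slip: the twist $\theta'$ you place in $2\pi(c_1(X)-\xi)$ would require $s(\xi)\ge1$, which is not assumed. Finally, the paper treats the case $s(\xi)\le0$ separately (there $\beta(\xi)=s(\xi)$ follows from the classical solvability of $\Ric(\omega)=s\omega+\alpha$ for $s\le0$ and $\alpha>0$ via \cite{BBGZ}, without touching $\delta^A$); your argument tacitly assumes $\beta>0$ when you rescale and invoke coercivity, so this case would need to be added.
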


\begin{proof}
We borrow the argument from \cite[Section 7.3]{BBJ18}.
The proof depends on the sign of $s(\xi)$. 
First consider the case $s(\xi)\leq0$. Then for any $s<s(\xi)$, pick an smooth positive form $\alpha\in 2\pi(c_1(X)-s\xi)$. Then by \cite{BBGZ}, there always exists $\omega\in2\pi\xi$ solving $\Ric(\omega)=s\omega+\alpha$. So we have $\beta(\xi)\geq s(\xi)$. The direction $\beta(\xi)\leq s(\xi)$ is trivial. So we obtain $\beta(\xi)=s(\xi)$.
Now assume that $s(\xi)>0$. Then for any $0<s<\min\{s(\xi),\delta^A(\xi)\}$, we have $\delta^A(s\xi)=\delta^A(\xi)/s>1$. Pick any positive form $\alpha\in2\pi(c_1(X)-s\xi)$, then by Proposition \ref{prop:YTD-delta-A}.(1) we can find $\omega\in2\pi\xi$ such that $\Ric(\omega)=s\omega+\alpha>s\omega$. So we deduce that $\beta(\xi)\geq\min\{s(\xi),\delta^A(\xi)\}$. Now for any $0<\beta<\beta(\xi)$, there exist $\omega\in2\pi\xi$ and a positive form $\alpha\in2\pi(c_1(X)-\beta\xi)$ such that $\Ric(\omega)=\beta\omega+\alpha$. Then by Proposition \ref{prop:YTD-delta-A}.(2), $\delta^A(\beta\xi)\geq1$, so that $\beta\leq\delta^A(\xi)$. Combining with $\beta\leq s(\xi)$, we thus have $\beta(\xi)\leq\min\{s(\xi),\delta^A(\xi)\}$. This completes the proof.

\end{proof}

\begin{remark}
\rm{
When $\xi=c_1(X)$, Proposition \ref{prop:beta-xi=s-delta-A} also appeared as \cite[Corollary 2.3]{SW}.
}
\end{remark}

Now by Theorem \ref{thm:BBJ-beta-L} and Proposition \ref{prop:beta-xi=s-delta-A}, we have the following consequence.

\begin{corollary}
\label{cor:s-delta=s-delta-A-Q}
For any ample $\QQ$-line bundle $L$, one has
$$
\beta(L)=\min\{s(L),\delta^A(L)\}=\min\{s(L),\delta(L)\}.
$$
\end{corollary}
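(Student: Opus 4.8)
The plan is to derive Corollary~\ref{cor:s-delta=s-delta-A-Q} as an immediate formal consequence of the two structural results that precede it, namely Theorem~\ref{thm:BBJ-beta-L} and Proposition~\ref{prop:beta-xi=s-delta-A}. Since $L$ is an ample $\QQ$-line bundle, the class $\xi:=c_1(L)$ lies in the K\"ahler cone (indeed in the ample cone), so Proposition~\ref{prop:beta-xi=s-delta-A} applies verbatim and yields $\beta(L)=\beta(c_1(L))=\min\{s(c_1(L)),\delta^A(c_1(L))\}=\min\{s(L),\delta^A(L)\}$, using the notational convention $\beta(L)=\beta(c_1(L))$, $s(L)=s(c_1(L))$, $\delta^A(L)=\delta^A(c_1(L))$ fixed in Section~\ref{sec:pre-beta} and Section~\ref{sec:delta-A}. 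On the other hand, Theorem~\ref{thm:BBJ-beta-L} of Berman--Boucksom--Jonsson gives, for the same ample $\QQ$-line bundle $L$, the identity $\beta(L)=\min\{s(L),\delta(L)\}$.

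The proof then consists of simply chaining these two equalities:
$$
\min\{s(L),\delta^A(L)\}=\beta(L)=\min\{s(L),\delta(L)\}.
$$
This establishes all three asserted equalities at once. I would also note explicitly that this comparison is only being asserted in the $\QQ$-polarized case, precisely because Theorem~\ref{thm:BBJ-beta-L} (the valuative computation of $\beta$ in terms of the algebraic $\delta$) is only available there; the analytic identity $\beta=\min\{s,\delta^A\}$ holds in full generality on the K\"ahler cone, but $\delta(\cdot)$ itself is a priori only defined for $\RR$-line bundles on a projective variety.

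There is essentially no obstacle here — the corollary is a bookkeeping statement, and the only thing to be careful about is that the hypotheses of the two cited results are genuinely met: $L$ ample $\QQ$-line bundle forces $c_1(L)\in\mathcal{K}(X)$ (so Proposition~\ref{prop:beta-xi=s-delta-A} is applicable), and $L$ ample $\QQ$-line bundle is exactly the hypothesis of Theorem~\ref{thm:BBJ-beta-L}. If anything is worth flagging, it is that the identity $\delta^A(L)$ and $\delta(L)$ need not be separately computable from these two displays alone — the corollary only equates the two $\min$'s, not the two thresholds; disentangling $\delta^A$ and $\delta$ (in particular whether $\delta^A=\delta$ always holds) is deferred to the later comparison results (Proposition~\ref{prop:delta-A<=delta-R} and the discussion in Section~\ref{sec:delt-delta-A}). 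Thus the write-up will be a two-line proof citing the two earlier results and observing the common left- and right-hand sides.
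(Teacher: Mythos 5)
Your proposal is correct and follows exactly the paper's own route: the paper derives this corollary by combining Theorem~\ref{thm:BBJ-beta-L} with Proposition~\ref{prop:beta-xi=s-delta-A}, just as you do. Your additional remarks about the hypotheses and about the distinction between equating the two minima versus equating $\delta^A$ and $\delta$ themselves are accurate and consistent with the paper's later discussion.
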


Using the continuity of $\delta$ and $\delta^A$ (which will be shown in the next section), we see that the above equality holds for ample $\RR$-line bundles as well.

We also have the following relation.
\begin{proposition}
\label{prop:delta-A<=delta-Q}
For any ample $\QQ$-line bundle $L$, we have
$$
\delta^A(L)\leq\delta(L).
$$
\end{proposition}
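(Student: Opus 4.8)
The plan is to compare the analytic Moser--Trudinger threshold against the algebraic one by testing the Moser--Trudinger inequality on a family of explicitly constructed potentials attached to divisorial valuations. The link between the two worlds is the classical Demailly--Kollár type computation: for a prime divisor $F$ over $X$ obtained by a blowup $\pi\colon Y\to X$, one builds quasi-psh functions $\psi_{t}$ on $X$ with Lelong number roughly $t$ along $F$, so that $e^{-\psi_{t}}$ fails to be integrable precisely when $t$ exceeds the log discrepancy $A(F)$. Turning this into a statement about $\mathcal H_{0}(X,\omega)$ rather than singular potentials is the technical heart, and for that I would invoke the standard approximation/regularization of the singular potentials by a decreasing sequence in $\mathcal H_{0}$ together with monotone convergence, exactly as in Tian's original treatment of the $\alpha$-invariant; since $L$ is a genuine ample $\QQ$-line bundle, the $\partial\bar\partial$-lemma and Demailly regularization are available without transcendental complications.

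Concretely, I would proceed as follows. Fix $\lambda<\delta^{A}(L)$; then there is $C_{\lambda}$ with $\int_{X}e^{-\lambda\varphi}\omega^{n}\le C_{\lambda}e^{\lambda J_{\omega}(\varphi)}$ for all $\varphi\in\mathcal H_{0}(X,\omega)$. Take any prime divisor $F$ over $X$, realized on a smooth model $\pi\colon Y\to X$, and consider, for a small parameter $\epsilon>0$ and $m$ large, sections of $mL$ vanishing to high order along $F$; equivalently, work directly on the level of filtrations and use that $S_{L}(F)=\frac1{\vol(L)}\int_{0}^{\infty}\vol(L-tF)\,dt$ is the limit of the normalized expected vanishing orders of the graded linear series along $\mathrm{ord}_{F}$. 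The key step is to choose a potential $\varphi_{F}\in\mathcal H_{0}(X,\omega)$ (or a decreasing family thereof) whose singularity type along $F$ mimics $\mathrm{ord}_{F}$ at level $t$, so that on one hand $\int_{X}e^{-\lambda\varphi_{F}}\omega^{n}<\infty$ forces $\lambda\cdot(\text{Lelong-type bound})\le A(F)$ — this is where log discrepancy enters via the change-of-variables $\pi^{*}\omega^{n}$ picking up the relative canonical divisor $K_{Y/X}$ — and on the other hand $J_{\omega}(\varphi_{F})$ is controlled, up to $o(1)$, by $S_{L}(F)$, using the interpretation of $J$ (or rather the related energy $E$) of such test functions as the integral $\int_{0}^{\infty}\vol(L-tF)\,dt$ over $\vol(L)$. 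Combining, the Moser--Trudinger inequality evaluated on $\varphi_{F}$ yields $\lambda A(F)\ge \lambda^{2}S_{L}(F)$ in the limit, i.e. $A(F)/S_{L}(F)\ge\lambda$; taking the infimum over $F$ gives $\delta(L)\ge\lambda$, and then letting $\lambda\uparrow\delta^{A}(L)$ finishes.

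An alternative, and probably cleaner, route avoids hand-built potentials entirely and goes through the non-Archimedean formalism of Boucksom--Jonsson: by Corollary \ref{cor:s-delta=s-delta-A-Q} one already knows $\beta(L)=\min\{s(L),\delta^{A}(L)\}=\min\{s(L),\delta(L)\}$ for ample $\QQ$-line bundles. This forces $\delta^{A}(L)$ and $\delta(L)$ to agree whenever either is $\le s(L)$. The only remaining case is $\delta^{A}(L)>s(L)$, where one must still show $\delta(L)\ge\delta^{A}(L)$; here I would fall back on the direct valuative argument above, or note that $\delta^{A}$ never exceeds the coercivity threshold of the Ding functional, which by \cite[v1]{BBJ18} is bounded by $\delta(L)$ (the quantitative comparison $\delta^{A}(L)\le\delta(L)$ alluded to in Proposition \ref{prop:delta-A<=delta-R} of the introduction). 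In any case, the expected main obstacle is the same in both approaches: passing rigorously from the singular, valuation-adapted psh weights to honest elements of $\mathcal H_{0}(X,\omega)$ while keeping simultaneous asymptotic control of the integrability exponent (giving $A(F)$) and of the energy $J_{\omega}$ (giving $S_{L}(F)$); once that bookkeeping is set up, the inequality $\delta^{A}(L)\le\delta(L)$ drops out by taking an infimum over $F$ and a supremum over $\lambda$.
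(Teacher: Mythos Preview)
Your primary route---building a single test potential $\varphi_F$ adapted to a divisor $F$ and reading off $A(F)/S_L(F)\ge\lambda$ from the Moser--Trudinger inequality---has a genuine gap at the combining step. The two facts you invoke, namely ``integrability of $e^{-\lambda\varphi_F}$ forces $\lambda\cdot(\text{Lelong number})\le A(F)$'' and ``$J_\omega(\varphi_F)\approx S_L(F)$'', do not by themselves yield $\lambda\le A(F)/S_L(F)$. If one takes $\varphi_F$ to be the equilibrium envelope with Lelong number $T$ along $F$ and lets $T$ increase, the integrability constraint coming from the finiteness of the right-hand side of MT gives only $\lambda T\le A(F)$ for all $T$ up to the pseudoeffective threshold $\tau(F)$, hence $\lambda\le A(F)/\tau(F)$; infimizing over $F$ this is the $\alpha$-invariant bound, not the $\delta$-bound, and it goes the wrong way since $\delta^A\ge\alpha$. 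To actually extract $S_L(F)$ rather than $\tau(F)$ one must work with the full geodesic \emph{ray} attached to $\mathrm{ord}_F$ and compare \emph{slopes} of $L_\alpha$ and $J_\omega$ along it; those slope formulas are precisely the non-Archimedean content of \cite{BBJ18}, so this route collapses back into the black box rather than bypassing it.

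Your alternative route via Corollary~\ref{cor:s-delta=s-delta-A-Q} is closer in spirit, but the endgame you sketch---``$\delta^A$ never exceeds the coercivity threshold of the Ding functional, which by \cite{BBJ18} is bounded by $\delta(L)$''---is in fact the paper's entire argument, and the side reference to Proposition~\ref{prop:delta-A<=delta-R} is circular (that proposition is deduced from the present one by continuity). The paper's proof is three lines: pick $\lambda<\delta^A(L)$, rescale so that $\lambda=1$ and hence $\delta^A(L)>1$; by Proposition~\ref{prop:delta-A>1=proper} the twisted Ding functional $D_\alpha$ is coercive; then \cite[Theorem~5.1]{BBJ18} gives uniform Ding stability, i.e.\ $\delta(L)>1$. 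No test potentials, no envelopes, no explicit comparison of $J$ with $S_L(F)$---the slope analysis is entirely outsourced to \cite{BBJ18}.
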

\begin{proof}
Pick any $\lambda\in(0,\delta^A(L))$, it suffices to show $\delta(L)>\lambda$. By rescaling $L$, we might as well assume that $\lambda=1$. So that $\delta^A(L)>1$ and hence for any $\alpha\in 2\pi (c_1(X)-\xi)$, $D_\alpha$ is coercive. Then by \cite[Theorem 5.1]{BBJ18}, $L$ is uniformly Ding stable, i.e. $\delta(L)>1$.
\end{proof}

It is expected that $\delta^A(L)$ agrees with $\delta(L)$. 
If this is true, then by Proposition \ref{prop:YTD-delta-A}, one could substantially improve our main result (Theorem \ref{thm:YTD}) so that the appearance of non-semipositive twist terms can be allowed.

\section{Continuity}
\label{sec:conti}
So far, we have introduced $\delta(\cdot)$, $\delta^A(\cdot)$ and $\beta(\cdot)$. The purpose of this section is to show that all these thresholds vary continuously on their domains.

\subsection{Proof of Theorem \ref{thm:delta-is-continuous}}
To prove the continuity of $\delta(\cdot)$, we use the strategy of Dervan \cite{D15,D16}. The key point is to establish the following comparison principle.

\begin{proposition}
\label{prop:delta-comparison}
There exists $\varepsilon_0$ only depending $n$ such that the following holds. For any big $\RR$-line bundle $L$ and any $\varepsilon\in(0,\varepsilon_0)$, let $L_\varepsilon$ be any small perturbation of $L$ such that
$$
\text{both }(1+\varepsilon)L-L_\varepsilon\text{ and }L_\varepsilon-(1-\varepsilon)L\text{ are big.}
$$
Then we have
$$
\delta(L+\varepsilon L_\varepsilon)\leq\delta(L)\leq\delta(L-\varepsilon L_\varepsilon).
$$
\end{proposition}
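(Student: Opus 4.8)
The plan is to mimic Dervan's proof of the continuity of Tian's $\alpha$-invariant, replacing the exact comparison principle (which seems hard for $\delta$) by the weak one stated here, and the two crucial inputs are: (i) the scaling identity $\delta(\lambda L)=\lambda^{-1}\delta(L)$, which is immediate from the definitions of $A(F)$ and $S_L(F)$ together with $\vol(\lambda L)=\lambda^n\vol(L)$; and (ii) a monotonicity/comparison relating $\delta$ of a line bundle to $\delta$ of a perturbation that it dominates. Since $\delta(L)=\inf_F A(F)/S_L(F)$ with $A(F)$ independent of the polarization, everything reduces to comparing the functionals $S_{L}(F)$ and $S_{L'}(F)$ for the various perturbations $L'$ of $L$, uniformly in $F$.

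First I would recall that $S_L(F)=\frac{1}{\vol(L)}\int_0^\infty\vol(L-tF)\,dt$ and observe that, by concavity of $\vol^{1/n}$ on the big cone (equivalently the Brunn--Minkowski/Khovanskii--Teissier inequality), for big $\RR$-line bundles $B_1,B_2$ and $s\in[0,1]$ one has $\vol((1-s)B_1+sB_2)^{1/n}\ge (1-s)\vol(B_1)^{1/n}+s\vol(B_2)^{1/n}$; more importantly, the function $t\mapsto\vol(B-tF)$ is itself (up to the $n$-th root) concave and decreasing. The key estimate I want is: if $L_\varepsilon$ satisfies the sandwich hypothesis $(1-\varepsilon)L\le L_\varepsilon\le(1+\varepsilon)L$ in the sense of bigness of the differences, then pulling back to a model on which $F$ lives, one has the containment of truncated classes $L+\varepsilon L_\varepsilon-tF \succeq (1-\varepsilon^2)\big((1+\varepsilon)^{-1}(L+\varepsilon L_\varepsilon)\big)-\ldots$, allowing one to bound $\vol(L+\varepsilon L_\varepsilon - tF)$ from above and below by $\vol$ of rescalings of $L-tF$ with scaling factors of the form $1\pm O(\varepsilon)$. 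Feeding this into the integral defining $S$, and using $\vol(L+\varepsilon L_\varepsilon)\ge\vol(L)$ from \eqref{eq:vol-L+B<vol-L} for the normalization, I expect to get $S_{L+\varepsilon L_\varepsilon}(F)\le (1+C\varepsilon)S_L(F)$ and $S_{L-\varepsilon L_\varepsilon}(F)\ge(1-C\varepsilon)S_L(F)$, with $C=C(n)$ and $\varepsilon<\varepsilon_0(n)$ — but I would first run the argument with the cruder, scaling-only bound, which gives $\delta(L+\varepsilon L_\varepsilon)\le\delta\big((1+\varepsilon)(1+\varepsilon)L\big)$-type inequalities and already suffices.

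Concretely, the cleanest route: since $(1+\varepsilon)L-L_\varepsilon$ is big, $L+\varepsilon L_\varepsilon = (L_\varepsilon) + (L-\varepsilon L_\varepsilon + \varepsilon L_\varepsilon)$... rather, write $(1+\varepsilon)^2 L - (L+\varepsilon L_\varepsilon) = \varepsilon\big((1+\varepsilon)L - L_\varepsilon\big) + \big((1+\varepsilon)^2-1-\varepsilon\big)L = \varepsilon\big((1+\varepsilon)L-L_\varepsilon\big)+(\varepsilon+\varepsilon^2)L$, which is big. Hence, for every prime divisor $F$ over $X$ and every $t\ge0$, after pullback $(1+\varepsilon)^2L - tF = (L+\varepsilon L_\varepsilon - tF) + \big((1+\varepsilon)^2L - (L+\varepsilon L_\varepsilon)\big)$, so by \eqref{eq:vol-L+B<vol-L} $\vol((1+\varepsilon)^2L-tF)\ge\vol(L+\varepsilon L_\varepsilon - tF)$. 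Combined with $\vol(L+\varepsilon L_\varepsilon)\ge\vol(L)$ and the change of variables $t\mapsto (1+\varepsilon)^2 t$ in the defining integral, this yields $S_{L+\varepsilon L_\varepsilon}(F)\le (1+\varepsilon)^2\,\frac{\vol((1+\varepsilon)^2 L)}{\vol(L)}\cdot\frac{1}{(1+\varepsilon)^2}\cdot\frac{1}{\vol((1+\varepsilon)^2L)}\int_0^\infty\vol((1+\varepsilon)^2L - sF)\,ds$... I will arrange the bookkeeping so that it collapses to $S_{L+\varepsilon L_\varepsilon}(F)\le (1+\varepsilon)^2 S_L(F)$, hence $\delta(L+\varepsilon L_\varepsilon)=\inf_F\frac{A(F)}{S_{L+\varepsilon L_\varepsilon}(F)}\ge (1+\varepsilon)^{-2}\inf_F\frac{A(F)}{S_L(F)}$, which is the wrong direction — so in fact I will use the symmetric inclusion $L+\varepsilon L_\varepsilon - tF \supseteq (\text{rescaling of } L - t'F)$ coming from $L_\varepsilon - (1-\varepsilon)L$ being big, giving $\vol(L+\varepsilon L_\varepsilon - tF)\ge\vol\big((1+\varepsilon(1-\varepsilon))L - tF\big)$ and hence the inequality $\delta(L+\varepsilon L_\varepsilon)\le\delta\big((1+\varepsilon-\varepsilon^2)L\big)=(1+\varepsilon-\varepsilon^2)^{-1}\delta(L)\le\delta(L)$ once one also controls the normalizing volume; the analogous manipulation with $(1+\varepsilon)L-L_\varepsilon$ big gives $\delta(L-\varepsilon L_\varepsilon)\ge\delta(L)$. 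The main obstacle I anticipate is precisely this bookkeeping — keeping the perturbation of the normalizing factor $\vol(\cdot)$ and the perturbation inside the integrand pointing the same way, uniformly over all $F$ and all $t$, so that a single $\varepsilon_0(n)$ works; the volume inequality \eqref{eq:vol-L+B<vol-L} and the homogeneity of $\vol$ and $A$, $S$ are exactly the tools that make this go through, and no positivity or smoothness of $X$ beyond normal projective klt is used.

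Once Proposition \ref{prop:delta-comparison} is in hand, the continuity of $\delta$ on the big cone follows by a standard squeeze: given big $L$ and a sequence $L_j\to L$, for each small $\varepsilon$ eventually $L_j$ plays the role of $\varepsilon L_\varepsilon$-perturbation in both inequalities (after absorbing scalars), forcing $\limsup_j\delta(L_j)\le\delta(L)\le\liminf_j\delta(L_j)$; letting $\varepsilon\to0$ closes it. I would present the squeeze argument explicitly as a short corollary right after the proposition.
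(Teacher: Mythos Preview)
Your approach is the same as the paper's --- reduce to showing $S_{L+\varepsilon L_\varepsilon}(F)\ge S_L(F)$ for every $F$ via volume monotonicity \eqref{eq:vol-L+B<vol-L} and homogeneity --- but your bookkeeping has a genuine gap. You try to compare $L+\varepsilon L_\varepsilon$ to the single rescaling $(1+\varepsilon-\varepsilon^2)L$: the bound $\vol(L+\varepsilon L_\varepsilon - tF)\ge\vol((1+\varepsilon-\varepsilon^2)L - tF)$ (from $L_\varepsilon-(1-\varepsilon)L$ big) is correct, but to conclude $S_{L+\varepsilon L_\varepsilon}(F)\ge S_{(1+\varepsilon-\varepsilon^2)L}(F)$ you would also need $\vol(L+\varepsilon L_\varepsilon)\le\vol((1+\varepsilon-\varepsilon^2)L)$, which is \emph{false} (the same hypothesis gives the reverse inequality). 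The bound $\vol(L+\varepsilon L_\varepsilon)\ge\vol(L)$ you invoke earlier also points the wrong way for the denominator. So ``once one also controls the normalizing volume'' is not just bookkeeping: it is where the second sandwich hypothesis must enter.

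The paper resolves this by using \emph{both} sandwich bounds for the \emph{same} inequality: $L_\varepsilon-(1-\varepsilon)L$ big gives $\vol(L+\varepsilon L_\varepsilon - tF)\ge\vol((1+\varepsilon-\varepsilon^2)L - tF)$ for the integrand, while $(1+\varepsilon)L-L_\varepsilon$ big gives $\vol(L+\varepsilon L_\varepsilon)\le\vol((1+\varepsilon+\varepsilon^2)L)$ for the normalizing volume. Combining these and using $S_{\lambda L}=\lambda S_L$ yields
\[
S_{L+\varepsilon L_\varepsilon}(F)\ \ge\ \bigg(\frac{1+\varepsilon-\varepsilon^2}{1+\varepsilon+\varepsilon^2}\bigg)^n(1+\varepsilon-\varepsilon^2)\cdot S_L(F),
\]
and the coefficient is $\ge 1$ for $\varepsilon<\varepsilon_0(n)$; this is exactly where the dimensional dependence of $\varepsilon_0$ comes from, and why the statement is a \emph{weak} comparison rather than a clean $\delta(L')\le\delta(L)$ whenever $L'-L$ is big. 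The concavity of $\vol^{1/n}$ you mention is not needed.
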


\begin{proof}
We only prove $\delta(L+\varepsilon L_\varepsilon)\leq\delta(L)$, since the other part follows in a similar manner.
Let $F$ be any prime divisor over $X$. It suffices to show
$$
S_{L+\varepsilon L_\varepsilon}(F)\geq S_L(F).
$$
To this end, we calculate as follows:
\begin{equation*}
    \begin{split}
        S_{L+\varepsilon L_\varepsilon}(F)&=\frac{1}{\vol(L+\varepsilon L_\varepsilon)}\int_0^\infty\vol(L+\varepsilon L_\varepsilon-tF)dt\\
        &\geq\frac{1}{\vol(L+(\varepsilon+\varepsilon^2)L)}\int_0^\infty\vol(L+(\varepsilon-\varepsilon^2)L-tF)dt\\
        &=\bigg(\frac{1+\varepsilon-\varepsilon^2}{1+\varepsilon+\varepsilon^2}\bigg)^n\cdot S_{(1+\varepsilon-\varepsilon^2)L}(F)\\
        &=\bigg(\frac{1+\varepsilon-\varepsilon^2}{1+\varepsilon+\varepsilon^2}\bigg)^n\cdot(1+\varepsilon-\varepsilon^2)\cdot S_{L}(F).\\
    \end{split}
\end{equation*}
Here we used the monotonicity of $\vol(\cdot)$ (recall \eqref{eq:vol-L+B<vol-L}).
By choosing $\varepsilon$ small enough we can arrange that
$$
\bigg(\frac{1+\varepsilon-\varepsilon^2}{1+\varepsilon+\varepsilon^2}\bigg)^n\cdot(1+\varepsilon-\varepsilon^2)\geq1.
$$
This completes the proof.
\end{proof}

\begin{theorem}(=Theorem \ref{thm:delta-is-continuous})
The delta invariant $\delta(\cdot)$ is a continuous function on the big cone.
\end{theorem}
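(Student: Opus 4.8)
The plan is to deduce continuity of $\delta(\cdot)$ on the big cone from the scaling property together with the weak comparison principle just established (Proposition \ref{prop:delta-comparison}), following the template of Dervan's argument for the $\alpha$-invariant. First I would fix a big $\RR$-line bundle $L$ and a basis $B_1,\dots,B_r$ of $N^1(X)_\RR$ consisting of big classes; since the big cone is open, any sufficiently small perturbation $L'$ of $L$ can be written as $L' = L + \sum_i t_i B_i$ with $|t_i|$ small. The goal is to sandwich $\delta(L')$ between $\delta((1-\eta)L)$ and $\delta((1+\eta)L)$ for some $\eta = \eta(t)\to 0$ as $t\to 0$, and then invoke the scaling property $\delta(\lambda L) = \lambda^{-1}\delta(L)$ to conclude $\delta(L')\to\delta(L)$.

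The key step is to choose, for a given perturbation direction, an auxiliary class $L_\varepsilon$ and a scale $\varepsilon$ so that Proposition \ref{prop:delta-comparison} applies and so that $\varepsilon L_\varepsilon$ absorbs the perturbation $\sum_i t_i B_i$. Concretely, given $L' = L + v$ with $v$ small, I would write $v = \varepsilon L_\varepsilon$ where $\varepsilon = \|v\|$ (in some fixed norm) and $L_\varepsilon = v/\varepsilon$ is a unit vector; one checks that for $\varepsilon$ small both $(1+\varepsilon)L - L_\varepsilon$ and $L_\varepsilon - (1-\varepsilon)L$ are big — this holds because $L_\varepsilon$ is bounded and $(1\pm\varepsilon)L$ stays close to $L$, which is big, while $\pm L_\varepsilon$ is a fixed-size perturbation that can be dominated after noting $(1\pm\varepsilon)L \mp L_\varepsilon$ is a small perturbation of the big class $L$ provided we also rescale; more carefully, one applies the proposition to $\tilde L := L$ after possibly replacing $L_\varepsilon$ by a rescaled version so that the bigness hypotheses of Proposition \ref{prop:delta-comparison} are met. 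Then Proposition \ref{prop:delta-comparison} gives
\begin{equation*}
\delta(L + \varepsilon L_\varepsilon) \leq \delta(L) \leq \delta(L - \varepsilon L_\varepsilon),
\end{equation*}
i.e. $\delta(L') \leq \delta(L)$; running the same argument with $L$ and $L'$ interchanged (using that $L$ is itself a small perturbation of $L'$) yields $\delta(L)\leq\delta(L')$ in the limit, hence $\limsup$ and $\liminf$ of $\delta(L')$ as $L'\to L$ both equal $\delta(L)$.

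The main obstacle I anticipate is that Proposition \ref{prop:delta-comparison}, as stated, only compares $\delta(L)$ with $\delta(L\pm\varepsilon L_\varepsilon)$ for a \emph{single} perturbation class $L_\varepsilon$ — it does not directly give a two-sided bound on $\delta(L')$ for an arbitrary nearby $L'$, because the roles of $L$ and $L'$ are asymmetric. To get genuine continuity one must both (a) use the proposition with $L$ as the base point to bound $\delta(L')$ from above by $\delta(L)$-type quantities, and (b) use it again with $L'$ as the base point — which requires the $\varepsilon_0$ in the proposition to be uniform enough (it depends only on $n$, which is exactly what makes this work) and requires checking the bigness conditions symmetrically. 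Handling the bookkeeping so that the same small $\varepsilon$ simultaneously satisfies $(\tfrac{1+\varepsilon-\varepsilon^2}{1+\varepsilon+\varepsilon^2})^n(1+\varepsilon-\varepsilon^2)\geq 1$ and absorbs the perturbation, while keeping all intermediate classes big, is the delicate part; once that is arranged, the scaling property converts the multiplicative gap $(1\pm O(\varepsilon))$ into an additive error in $\delta$ that vanishes as $L'\to L$, and continuity follows. I would also remark, as the paper does, that nothing in this argument uses smoothness of $X$, so it holds for normal projective $X$ with klt singularities.
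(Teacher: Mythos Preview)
Your high-level plan in the first paragraph is exactly right: sandwich $\delta(L')$ between $\delta((1\pm\eta)L)$ and use scaling. But your implementation in the second paragraph breaks, and the fix you propose in the third paragraph would prove too much.

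The gap is in your choice $L_\varepsilon = v/\|v\|$. The hypotheses of Proposition \ref{prop:delta-comparison} require both $(1+\varepsilon)L - L_\varepsilon$ and $L_\varepsilon - (1-\varepsilon)L$ to be big; these force $L_\varepsilon$ to lie in a tiny neighbourhood of $L$ itself, \emph{not} to be a unit vector in an arbitrary direction. For a generic unit vector $L_\varepsilon$, the class $L_\varepsilon - (1-\varepsilon)L \approx L_\varepsilon - L$ is a fixed-size translate of $-L$ and has no reason to be big. Your parenthetical ``more carefully \ldots\ possibly replacing $L_\varepsilon$ by a rescaled version'' gestures at the problem but does not resolve it. Moreover, the ``swap $L$ and $L'$'' argument cannot be salvaged as stated: if it really produced $\delta(L')\leq\delta(L)$ and, by symmetry, $\delta(L)\leq\delta(L')$ for all nearby $L'$, then $\delta$ would be locally constant on the big cone, contradicting the scaling relation $\delta(\lambda L)=\lambda^{-1}\delta(L)$.

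The paper's resolution is a specific algebraic trick that makes your first-paragraph plan work. Given a direction $S$ and a target accuracy $\varepsilon\in(0,\varepsilon_0)$, write
\[
L+\gamma S \;=\; \frac{1}{1+\varepsilon}\Big(L+\varepsilon L_\varepsilon\Big),\qquad L_\varepsilon := L + \frac{(1+\varepsilon)\gamma}{\varepsilon}S.
\]
Now choose $\gamma$ small (depending on $\varepsilon$) so that $L_\varepsilon$ is genuinely close to $L$ and the bigness hypotheses hold. Proposition \ref{prop:delta-comparison} then gives $\delta(L+\varepsilon L_\varepsilon)\leq\delta(L)$, and scaling converts this to $\delta(L+\gamma S)=(1+\varepsilon)\delta(L+\varepsilon L_\varepsilon)\leq(1+\varepsilon)\delta(L)$. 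The companion identity $L+\gamma S=\frac{1}{1-\varepsilon}(L-\varepsilon L'_\varepsilon)$ with $L'_\varepsilon=L-\frac{(1-\varepsilon)\gamma}{\varepsilon}S$ gives the lower bound $(1-\varepsilon)\delta(L)$. The point is that $\varepsilon$ is fixed first and $\gamma$ is then sent to zero; the perturbation is absorbed not into $\varepsilon$ but into the \emph{direction} $L_\varepsilon$, which is kept near $L$.
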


\begin{proof}

Let $L$ be a big $\RR$-line bundle. Fix any auxiliary $\RR$-line bundle $S\in N^1(X)_\RR$. We need to show that, for any small $\varepsilon>0$, there exists $\gamma>0$ such that
$$
(1-\varepsilon)\delta(L)\leq\delta(L+\gamma S)\leq(1+\varepsilon)\delta(L).
$$
Here $L+\gamma S$ is always assumed to be big (by choosing $\gamma$ sufficiently small). Notice that for any $\varepsilon>0$, we can write
$$
L+\gamma S=\frac{1}{1+\varepsilon}\bigg(L+\varepsilon\big(L+\frac{(1+\varepsilon)\gamma}{\varepsilon}S\big)\bigg).
$$
Put
$$
L_\varepsilon:=L+\frac{(1+\varepsilon)\gamma}{\varepsilon}S.
$$
Then by choosing $\gamma$ small enough, we can assume that
$$
\text{both }(1+\varepsilon)L-L_\varepsilon\text{ and }L_\varepsilon-(1-\varepsilon)L\text{ are big.}
$$
So from the scaling property of $\delta(\cdot)$ and Proposition \ref{prop:delta-comparison}, it follows that
$$
\delta(L+\gamma S)=(1+\varepsilon)\delta(L+\varepsilon L_\varepsilon)\leq(1+\varepsilon)\delta(L).
$$
We can also write
$$
L+\gamma S=\frac{1}{1-\varepsilon}\bigg(L-\varepsilon\big(L-\frac{(1-\varepsilon)\gamma}{\varepsilon}S\big)\bigg).
$$
Then a similar treatment as above yields
$$
\delta(L+\gamma S)\geq(1-\varepsilon)\delta(L).
$$
In conclusion, for any small $\varepsilon>0$, by choosing $\gamma$ to be sufficiently small, we have
$$
(1-\varepsilon)\delta(L)\leq\delta(L+\gamma S)\leq(1+\varepsilon)\delta(L).
$$
This completes the proof.
\end{proof}

\subsection{Proof of Theorem \ref{thm:delta-A-continuous}} Now let us establish the continuity of $\delta^A(\cdot)$ using the above strategy. The key result is the following comparison principle.

\begin{proposition}
\label{prop:delta-A-comparison}
There exists $\varepsilon_0$ only depending $n$ such that the following holds. For any K\"ahler class $\xi$ and any $\varepsilon\in(0,\varepsilon_0)$, let $\xi_\varepsilon$ be any small perturbation of $\xi$ such that
there are two K\"ahler forms $\omega\in2\pi\xi$ and $\omega_\varepsilon\in2\pi\xi_\varepsilon$ satisfying
$$
(1-\varepsilon)\omega\leq\omega_\varepsilon\leq(1+\varepsilon)\omega.
$$
Then we have
$$
\delta^A(\xi+\varepsilon \xi_\varepsilon)\leq\delta^A(\xi)\leq\delta^A(\xi-\varepsilon \xi_\varepsilon).
$$
\end{proposition}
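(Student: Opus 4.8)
The plan is to follow the proof of Proposition~\ref{prop:delta-comparison} in spirit, with the monotonicity of the volume function replaced by a monotonicity of the $J$-functional in its reference form. It suffices to prove the first inequality $\delta^A(\xi+\varepsilon\xi_\varepsilon)\le\delta^A(\xi)$; the second is entirely parallel, using $\omega-\varepsilon\omega_\varepsilon$ (which is K\"ahler for $\varepsilon$ small) in place of $\omega+\varepsilon\omega_\varepsilon$ together with the obvious sign changes, and I indicate the modifications at the end.

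Put $\Omega:=\omega+\varepsilon\omega_\varepsilon\in 2\pi(\xi+\varepsilon\xi_\varepsilon)$. The hypothesis $(1-\varepsilon)\omega\le\omega_\varepsilon\le(1+\varepsilon)\omega$ gives $a\omega\le\Omega\le b\omega$ with $a:=1+\varepsilon-\varepsilon^2$ and $b:=1+\varepsilon+\varepsilon^2$, both $>1$ for $\varepsilon$ small. Since $\Omega\ge\omega$, every $\omega$-psh function is $\Omega$-psh, so $\mathcal{H}_0(X,\omega)\subseteq\mathcal{H}_0(X,\Omega)$; moreover $\delta^A(\xi+\varepsilon\xi_\varepsilon)$ may be computed using potentials of $\Omega$. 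So I would fix $\lambda<\delta^A(\xi+\varepsilon\xi_\varepsilon)$ and $C_\lambda>0$ with $\int_Xe^{-\lambda\psi}\Omega^n\le C_\lambda e^{\lambda J_\Omega(\psi)}$ for all $\psi\in\mathcal{H}_0(X,\Omega)$. For $\varphi\in\mathcal{H}_0(X,\omega)$, using $\Omega^n\ge a^n\omega^n$ as measures,
\[
\int_Xe^{-\lambda\varphi}\omega^n\le\frac{1}{a^n}\int_Xe^{-\lambda\varphi}\Omega^n\le\frac{C_\lambda}{a^n}\,e^{\lambda J_\Omega(\varphi)}.
\]
Hence everything reduces to the inequality $J_\Omega(\varphi)\le J_\omega(\varphi)$ for $\varphi\in\mathcal{H}_0(X,\omega)$: granting it, the displayed bound becomes $\int_Xe^{-\lambda\varphi}\omega^n\le(C_\lambda/a^n)e^{\lambda J_\omega(\varphi)}$ on all of $\mathcal{H}_0(X,\omega)$, whence $\delta^A(\xi)\ge\lambda$, and letting $\lambda\nearrow\delta^A(\xi+\varepsilon\xi_\varepsilon)$ concludes.

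To bound $J_\Omega(\varphi)$ I would first prove the monotonicity principle: for K\"ahler forms $\eta_1\le\eta_2$ and any smooth $\rho$ that is psh with respect to both, $V_{\eta_1}J_{\eta_1}(\rho)\le V_{\eta_2}J_{\eta_2}(\rho)$, where $V_\eta:=\int_X\eta^n$. This drops out of the gradient formula~\eqref{eq:J-func}: each summand $\sqrt{-1}\,\partial\rho\wedge\dbar\rho\wedge\eta_j^{\,n-1-i}\wedge(\eta_j+\idd\rho)^i$ is a nonnegative measure, and since $\eta_1\le\eta_2$ and $\eta_1+\idd\rho\le\eta_2+\idd\rho$, a telescoping into products of nonnegative $(1,1)$-forms shows the $\eta_1$-summand is pointwise dominated by the $\eta_2$-summand; summing with the positive weights $\frac{n-i}{n+1}$ and integrating gives the claim. (This is the non-Archimedean flavour of the energy underlying the proposition, exactly parallel to $\vol(L+B)\ge\vol(L)$.) Applying it with $\eta_1=\Omega\le\eta_2=b\omega$, together with $V_\Omega\ge a^nV$ and $V_{b\omega}=b^nV$, yields $J_\Omega(\varphi)\le(b/a)^nJ_{b\omega}(\varphi)$. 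Combining the scaling identity $J_{b\omega}(b\varphi)=b\,J_\omega(\varphi)$ with the upper half of Ding's inequality~\eqref{eq:ding-ineq} applied to $b\varphi\in\mathcal{H}(X,b\omega)$ and the scalar $1/b\in(0,1)$ gives $J_{b\omega}(\varphi)=J_{b\omega}\!\big(\tfrac1b(b\varphi)\big)\le b^{-1/n}J_\omega(\varphi)$, so
\[
J_\Omega(\varphi)\le\frac{b^{\,n-1/n}}{a^n}\,J_\omega(\varphi).
\]
Since $J_\omega\ge0$, the wanted $J_\Omega(\varphi)\le J_\omega(\varphi)$ follows as soon as $b^{\,n-1/n}\le a^n$, i.e. $(n^2-1)\log b\le n^2\log a$ --- the numerical step playing the role of $\big(\tfrac{1+\varepsilon-\varepsilon^2}{1+\varepsilon+\varepsilon^2}\big)^n(1+\varepsilon-\varepsilon^2)\ge1$ in Proposition~\ref{prop:delta-comparison}; a Taylor expansion shows it holds for all $\varepsilon<\varepsilon_0(n)$ with $\varepsilon_0$ of order $n^{-2}$, so $\varepsilon_0$ is indeed dimensional.

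The step I expect to need the most care is the monotonicity principle together with the bookkeeping of the constants: one must check that the comparison of the $(n,n)$-forms is legitimate (it is, as wedge products of nonnegative $(1,1)$-forms are nonnegative measures and differences of such products telescope into nonnegative products), and that after chaining it with the scaling identity and Ding's inequality the constant $b^{\,n-1/n}/a^n$ really is $\le1$ for $\varepsilon$ below a threshold depending only on $n$. For the second inequality $\delta^A(\xi)\le\delta^A(\xi-\varepsilon\xi_\varepsilon)$ one sets $\Omega':=\omega-\varepsilon\omega_\varepsilon$, observes $a'\omega\le\Omega'\le b'\omega$ with $a':=1-\varepsilon-\varepsilon^2$ and $b':=1-\varepsilon+\varepsilon^2$ in $(0,1)$, uses $\mathcal{H}_0(X,\Omega')\subseteq\mathcal{H}_0(X,\omega)$ (since $\Omega'\le\omega$) and $(\Omega')^n\le(b')^n\omega^n$, applies the monotonicity principle to $\omega\le(a')^{-1}\Omega'$ and then again the upper half of Ding's inequality, and finishes once $(b')^n\le(a')^{\,n-1/n}$, which once more holds for $\varepsilon$ small.
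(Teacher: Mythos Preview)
Your proposal is correct and follows essentially the same approach as the paper: both reduce to proving $J_{\omega+\varepsilon\omega_\varepsilon}(\varphi)\le J_\omega(\varphi)$ by comparing $\omega+\varepsilon\omega_\varepsilon$ with $b\omega$ ($b=1+\varepsilon+\varepsilon^2$) via the positivity of the gradient expression for $J$, then applying Ding's inequality, and both arrive at the identical constant $(b/a)^n b^{-1/n}$ that must be $\le 1$. The only cosmetic difference is that you package the first step as a separate ``monotonicity principle'' $V_{\eta_1}J_{\eta_1}\le V_{\eta_2}J_{\eta_2}$ and pass through $J_{b\omega}(\varphi)$ via scaling, whereas the paper unwinds the same computation inline and lands at $J_\omega(\varphi/b)$ before invoking Ding.
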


\begin{proof}
We only show $\delta^A(\xi+\varepsilon \xi_\varepsilon)\leq\delta^A(\xi)$, since the proof for the other part is similar.
For any $\varphi\in\mathcal{H}(X,\omega)\subseteq\mathcal{H}(X,\omega+\varepsilon\omega_\varepsilon)$ and $\lambda>0$, it is clear that,
$$
\int_Xe^{-\lambda\varphi}(\omega+\varepsilon\omega_\varepsilon)^n\geq\int_Xe^{-\lambda\varphi}\omega^n.
$$
So it suffices to show
$$
J_{\omega+\varepsilon\omega_\varepsilon}(\varphi)\leq J_\omega(\varphi),\ \forall\varphi\in\mathcal{H}(X,\omega).
$$
For this, we compute
\begin{equation*}
    \begin{split}
        J_{\omega+\varepsilon\omega_\varepsilon}(\varphi)&=\frac{1}{\int_X(\omega+\varepsilon\omega_\varepsilon)^n}\int_0^1\int_X\varphi\big((\omega+\varepsilon\omega_\varepsilon)^n-(\omega+\varepsilon\omega_\varepsilon+\idd s\varphi)^n\big)ds\\
        &=\frac{1}{\int_X(\omega+\varepsilon\omega_\varepsilon)^n}\int_0^1\int_X\sum_{j=0}^ns\sqrt{-1}\partial\varphi\wedge\bar{\partial}\varphi\wedge(\omega+\varepsilon\omega_\varepsilon)^{n-1-j}\wedge(\omega+\varepsilon\omega_\varepsilon+\idd s\varphi)^jds\\
        &\leq\frac{(1+\varepsilon+\varepsilon^2)^{n-1}}{\int_X(\omega+(\varepsilon-\varepsilon^2)\omega)^n}\int_0^1\int_X\sum_{j=0}^ns\sqrt{-1}\partial\varphi\wedge\bar{\partial}\varphi\wedge\omega^{n-1-j}\wedge(\omega+\idd s\varphi/(1+\varepsilon+\varepsilon^2))^jds\\
        &=\bigg(\frac{1+\varepsilon+\varepsilon^2}{1+\varepsilon-\varepsilon^2}\bigg)^n\cdot\frac{1}{\int_X\omega^n}\int_0^1\int_X\varphi\big(\omega^n-(\omega+\idd s\varphi/(1+\varepsilon+\varepsilon^2))^n\big)ds.\\
        &=\bigg(\frac{1+\varepsilon+\varepsilon^2}{1+\varepsilon-\varepsilon^2}\bigg)^n\cdot (1+\varepsilon+\varepsilon^2)\cdot J_\omega\bigg(\frac{\varphi}{1+\varepsilon+\varepsilon^2}\bigg)\\
        &\leq\bigg(\frac{1+\varepsilon+\varepsilon^2}{1+\varepsilon-\varepsilon^2}\bigg)^n\cdot\bigg(\frac{1}{1+\varepsilon+\varepsilon^2}\bigg)^{\frac{1}{n}}\cdot J_\omega(\varphi)\\
    \end{split}
\end{equation*}
We used Ding's inequality \eqref{eq:ding-ineq}  in the last step.
Now by choosing $\varepsilon$ to be sufficiently small, we can arrange that
$$
\bigg(\frac{1+\varepsilon+\varepsilon^2}{1+\varepsilon-\varepsilon^2}\bigg)^n\cdot\bigg(\frac{1}{1+\varepsilon+\varepsilon^2}\bigg)^\frac{1}{n}\leq1.
$$
This completes the proof.
\end{proof}

\begin{theorem}
(=Theorem \ref{thm:delta-A-continuous})
The analytic $\delta$-invariant
    $\delta^A(\cdot)$ is a continuous function on the K\"ahler cone.
\end{theorem}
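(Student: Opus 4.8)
The plan is to run, almost verbatim, the argument used for Theorem~\ref{thm:delta-is-continuous}, with the algebraic comparison principle Proposition~\ref{prop:delta-comparison} replaced by its analytic counterpart Proposition~\ref{prop:delta-A-comparison}, and with the openness of the big cone in $N^1(X)_\RR$ replaced by the openness of the K\"ahler cone $\mathcal{K}(X)$. Fix a K\"ahler class $\xi$ and an arbitrary auxiliary class $\eta\in H^{1,1}(X,\RR)$. It suffices to show that for every small $\varepsilon>0$ there is $\gamma>0$, with $\xi+\gamma\eta$ still K\"ahler, such that
\[
(1-\varepsilon)\,\delta^A(\xi)\leq\delta^A(\xi+\gamma\eta)\leq(1+\varepsilon)\,\delta^A(\xi).
\]
As in the algebraic case, for $\varepsilon\in(0,\varepsilon_0)$, with $\varepsilon_0$ the constant of Proposition~\ref{prop:delta-A-comparison}, I would write
\[
\xi+\gamma\eta=\frac{1}{1+\varepsilon}\big(\xi+\varepsilon\,\xi_\varepsilon\big),\qquad \xi_\varepsilon:=\xi+\frac{(1+\varepsilon)\gamma}{\varepsilon}\,\eta,
\]
and symmetrically $\xi+\gamma\eta=\frac{1}{1-\varepsilon}\big(\xi-\varepsilon\,\xi_\varepsilon'\big)$ with $\xi_\varepsilon':=\xi-\frac{(1-\varepsilon)\gamma}{\varepsilon}\,\eta$. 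Assuming for the moment that $\gamma$ is small enough that Proposition~\ref{prop:delta-A-comparison} applies to the pairs $(\xi,\xi_\varepsilon)$ and $(\xi,\xi_\varepsilon')$, the comparison principle together with the scaling property~\eqref{eq:scaling-delt-A} gives
\[
\delta^A(\xi+\gamma\eta)=(1+\varepsilon)\,\delta^A(\xi+\varepsilon\xi_\varepsilon)\leq(1+\varepsilon)\,\delta^A(\xi),
\]
and likewise $\delta^A(\xi+\gamma\eta)=(1-\varepsilon)\,\delta^A(\xi-\varepsilon\xi_\varepsilon')\geq(1-\varepsilon)\,\delta^A(\xi)$, which is exactly the desired two-sided estimate, completing the proof.

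The only point that is genuinely new compared with the algebraic situation is the verification that the hypothesis of Proposition~\ref{prop:delta-A-comparison} can be met for $\gamma$ small, namely that there exist K\"ahler representatives $\omega\in2\pi\xi$ and $\omega_\varepsilon\in2\pi\xi_\varepsilon$ with $(1-\varepsilon)\omega\leq\omega_\varepsilon\leq(1+\varepsilon)\omega$ (and similarly for $\xi_\varepsilon'$). This is elementary: fix once and for all a K\"ahler form $\omega\in2\pi\xi$ and a smooth, possibly non-positive, representative $\chi\in2\pi\eta$, and set $\omega_\varepsilon:=\omega+\frac{(1+\varepsilon)\gamma}{\varepsilon}\chi\in2\pi\xi_\varepsilon$. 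Since $X$ is compact and $\omega$ is a positive $(1,1)$-form, there is a constant $C>0$ with $-C\,\omega\leq\chi\leq C\,\omega$ on $X$; hence as soon as $\frac{(1+\varepsilon)\gamma}{\varepsilon}\leq\varepsilon/C$ the form $\omega_\varepsilon$ is K\"ahler, lies in $2\pi\xi_\varepsilon$, and satisfies the required two-sided bound. The same computation with $-\chi$ in place of $\chi$ handles $\xi_\varepsilon'$. The order of quantifiers is the crucial subtlety, exactly as in the proof of Theorem~\ref{thm:delta-is-continuous}: one first fixes $\varepsilon$, which pins down the required $C^0$-smallness, and only then chooses $\gamma$ (here any $\gamma\lesssim\varepsilon^2$ will do).

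I do not expect a real obstacle here. Once Proposition~\ref{prop:delta-A-comparison} is in hand, the theorem is a routine transcription of the proof of Theorem~\ref{thm:delta-is-continuous}, the one extra ingredient being the trivial observation above that a K\"ahler class stays pointwise comparable to $\xi$ under small perturbation. The substantive work — the non-Archimedean-flavoured estimate controlling $J_{\omega+\varepsilon\omega_\varepsilon}$ by $J_\omega$ via Ding's inequality~\eqref{eq:ding-ineq} — has already been carried out in Proposition~\ref{prop:delta-A-comparison}; what remains is bookkeeping of the constants $\varepsilon_0$, $C$, $\gamma$, together with the implicit use (valid throughout) that $\delta^A(\xi)$ is finite on $\mathcal{K}(X)$, so that the multiplicative errors $(1\pm\varepsilon)$ genuinely translate into closeness.
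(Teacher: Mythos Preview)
Your proposal is correct and follows exactly the route the paper takes: the paper's own proof simply says that with the scaling property \eqref{eq:scaling-delt-A} and Proposition~\ref{prop:delta-A-comparison} in hand, the argument is the same as for Theorem~\ref{thm:delta-is-continuous} and is omitted. You have written out that omitted argument in full, and in addition you supply the one small verification the paper leaves implicit, namely that for $\gamma$ small enough one can choose K\"ahler representatives $\omega_\varepsilon\in2\pi\xi_\varepsilon$ satisfying the pointwise bound $(1-\varepsilon)\omega\le\omega_\varepsilon\le(1+\varepsilon)\omega$ required by Proposition~\ref{prop:delta-A-comparison}; your argument via $-C\omega\le\chi\le C\omega$ is exactly right.
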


\begin{proof}Now with \eqref{eq:scaling-delt-A} and Proposition \ref{prop:delta-A-comparison}, the proof is almost the same as the one for Theorem \ref{thm:delta-is-continuous}. So we omit it.
\end{proof}

As a consequence we obtain the continuity of $\beta(\cdot)$, which improves \cite[Lemma 4.3]{Z20} in the author's recent work. 
\begin{theorem}
The greatest Ricci lower bound $\beta(\cdot)$ is a continuous function on the K\"ahler cone.
\end{theorem}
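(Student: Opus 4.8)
The plan is to deduce the continuity of $\beta(\cdot)$ directly from Proposition \ref{prop:beta-xi=s-delta-A}, which expresses $\beta$ as the pointwise minimum of two functions on the K\"ahler cone:
$$
\beta(\xi)=\min\{s(\xi),\delta^A(\xi)\}.
$$
Since the minimum of two continuous functions is continuous, it suffices to know that each of $s(\cdot)$ and $\delta^A(\cdot)$ is continuous on $\mathcal{K}(X)$. The first, $s(\cdot)$, is continuous essentially by inspection of its definition \eqref{eq:def-S-xi}: the condition ``$c_1(X)-s\xi$ is K\"ahler'' is open in $(s,\xi)$ because the K\"ahler cone is open, so $s(\xi)$ is the supremum of an open slice and depends continuously (indeed, it is a concave positively homogeneous function) on $\xi$. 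The second, $\delta^A(\cdot)$, is continuous by Theorem \ref{thm:delta-A-continuous}, which has just been established via the comparison principle of Proposition \ref{prop:delta-A-comparison}.

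So the proof is essentially a one-line combination: given $\xi\in\mathcal{K}(X)$ and $\xi_j\to\xi$ in $\mathcal{K}(X)$, we have $s(\xi_j)\to s(\xi)$ and $\delta^A(\xi_j)\to\delta^A(\xi)$, hence $\beta(\xi_j)=\min\{s(\xi_j),\delta^A(\xi_j)\}\to\min\{s(\xi),\delta^A(\xi)\}=\beta(\xi)$. There is really no hard part here once the two preceding theorems are in hand; the entire weight of the argument has been moved into Theorem \ref{thm:delta-A-continuous}, whose proof in turn rests on the slightly delicate Ding-type rescaling estimate in Proposition \ref{prop:delta-A-comparison}. If one wanted to avoid invoking Proposition \ref{prop:beta-xi=s-delta-A}, an alternative would be to argue directly: $\beta$ is the nef/positivity threshold for the twisted equation $\Ric(\omega)\geq\beta\omega$, and one could try to transplant solutions between nearby classes using the comparison estimate $\delta^A(\xi+\varepsilon\xi_\varepsilon)\leq\delta^A(\xi)\leq\delta^A(\xi-\varepsilon\xi_\varepsilon)$ together with Proposition \ref{prop:YTD-delta-A}; but this just reproduces the content of the $\min$ formula, so invoking Proposition \ref{prop:beta-xi=s-delta-A} is cleaner.

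\begin{proof}
By Proposition \ref{prop:beta-xi=s-delta-A}, for every K\"ahler class $\xi$ we have $\beta(\xi)=\min\{s(\xi),\delta^A(\xi)\}$. The K\"ahler threshold $s(\cdot)$ is continuous on $\mathcal{K}(X)$ (immediate from \eqref{eq:def-S-xi}, since the K\"ahler cone is open), and the analytic $\delta$-invariant $\delta^A(\cdot)$ is continuous on $\mathcal{K}(X)$ by Theorem \ref{thm:delta-A-continuous}. The pointwise minimum of two continuous functions is continuous, so $\beta(\cdot)$ is continuous on $\mathcal{K}(X)$.
\end{proof}
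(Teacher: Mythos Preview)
Your proof is correct and follows exactly the same route as the paper: invoke Proposition~\ref{prop:beta-xi=s-delta-A} to write $\beta(\xi)=\min\{s(\xi),\delta^A(\xi)\}$, then use the (already noted) continuity of $s(\cdot)$ and Theorem~\ref{thm:delta-A-continuous} for $\delta^A(\cdot)$. There is nothing to add.
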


\begin{proof}
Recall
$
\beta(\xi)=\min\{s(\xi),\delta^A(\xi)\}
$
(see Proposition \ref{prop:beta-xi=s-delta-A}).
Since both $s(\cdot)$ are $\delta^A(\cdot)$ are continuous on the K\"ahler cone, so is $\beta(\cdot)$.
\end{proof}

By the continuity of $\delta(\cdot)$ and $\delta^A(\cdot)$, we can extend Corollary \ref{cor:s-delta=s-delta-A-Q} to $\RR$-line bundles.
\begin{corollary}
\label{cor:s-delta=s-delta-A-R}
For any ample $\RR$-line bundle $\xi$, one has
$$
\beta(\xi)=\min\{s(\xi),\delta^A(\xi)\}=\min\{s(\xi),\delta(\xi)\}.
$$
\end{corollary}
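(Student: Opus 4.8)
The plan is to derive the $\RR$-coefficient statement from its $\QQ$-coefficient counterpart (Corollary~\ref{cor:s-delta=s-delta-A-Q}, equivalently Theorem~\ref{thm:BBJ-beta-L}) by approximation, using the continuity results just established. First I would note that the equality $\beta(\xi)=\min\{s(\xi),\delta^A(\xi)\}$ requires no new argument: an ample $\RR$-line bundle $\xi$ is in particular a K\"ahler class, so this is precisely Proposition~\ref{prop:beta-xi=s-delta-A}, which is valid for arbitrary K\"ahler classes. Thus the only point needing proof is the identification \[ \min\{s(\xi),\delta^A(\xi)\}=\min\{s(\xi),\delta(\xi)\}, \] equivalently $\beta(\xi)=\min\{s(\xi),\delta(\xi)\}$ for ample $\RR$-line bundles.

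For this, since the ample cone is open in $N^1(X)_\RR$ and rational classes are dense in it, I would choose ample $\QQ$-line bundles $\xi_j\to\xi$ in $N^1(X)_\RR$. For each $j$, Theorem~\ref{thm:BBJ-beta-L} gives $\beta(\xi_j)=\min\{s(\xi_j),\delta(\xi_j)\}$. Letting $j\to\infty$, the left side converges to $\beta(\xi)$ by the continuity of $\beta(\cdot)$ on the K\"ahler cone just proved, while on the right side $s(\xi_j)\to s(\xi)$ by continuity of $s(\cdot)$ and $\delta(\xi_j)\to\delta(\xi)$ by Theorem~\ref{thm:delta-is-continuous} (the ample cone being contained in the big cone); since $\min$ is continuous, we obtain $\beta(\xi)=\min\{s(\xi),\delta(\xi)\}$. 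Combining with the first equality finishes the proof. (Alternatively one could run the same limiting argument directly on the identity $\min\{s(\xi_j),\delta^A(\xi_j)\}=\min\{s(\xi_j),\delta(\xi_j)\}$ of Corollary~\ref{cor:s-delta=s-delta-A-Q}, now invoking the continuity of $\delta^A(\cdot)$ from Theorem~\ref{thm:delta-A-continuous} as well.)

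I do not expect a genuine obstacle here, as all the heavy lifting --- the continuity of $\delta(\cdot)$ and $\delta^A(\cdot)$, hence of $\beta(\cdot)$, together with the rational case --- is already in place, and what remains is a routine limiting argument. The only things to check carefully are that the approximants $\xi_j$ lie simultaneously in the domains of $s$, $\delta$, $\delta^A$ and $\beta$ (automatic, since the ample cone is open and sits inside the big cone) and that the correspondence between ample $\QQ$-line bundles and the K\"ahler classes they polarize is compatible with all four invariants, so that Corollary~\ref{cor:s-delta=s-delta-A-Q} genuinely passes to the limit.
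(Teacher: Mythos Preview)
Your proposal is correct and matches the paper's own justification, which simply says that by the continuity of $\delta(\cdot)$ and $\delta^A(\cdot)$ one extends Corollary~\ref{cor:s-delta=s-delta-A-Q} from $\QQ$-line bundles to $\RR$-line bundles. Your observation that the first equality needs no approximation (being an instance of Proposition~\ref{prop:beta-xi=s-delta-A}) is a clean way to organize the argument, and your alternative route via Corollary~\ref{cor:s-delta=s-delta-A-Q} plus continuity of $\delta^A$ is exactly what the paper has in mind.
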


We can also extend Proposition \ref{prop:delta-A<=delta-Q} by continuity.

\begin{proposition}
\label{prop:delta-A<=delta-R}
For any ample $\RR$-line bundle $\xi$, one has
$$
\delta^A(\xi)\leq\delta(\xi).
$$
\end{proposition}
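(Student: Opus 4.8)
The plan is to deduce the $\RR$-line bundle case from the already established $\QQ$-line bundle case (Proposition \ref{prop:delta-A<=delta-Q}) by a density-plus-continuity argument. Both sides of the desired inequality are, by this stage of the paper, known to be continuous functions on the ample cone of $N^1(X)_\RR$: the function $\delta(\cdot)$ is continuous on the big cone by Theorem \ref{thm:delta-is-continuous} (hence in particular on the ample cone), and $\delta^A(\cdot)$ is continuous on the K\"ahler cone by Theorem \ref{thm:delta-A-continuous} — and since $X$ is Fano here, or more precisely since we only need the statement for ample $\RR$-classes where both invariants are defined, the two notions live on the same domain. So the strategy is simply: pick a sequence of ample $\QQ$-line bundles $L_k \to \xi$ in $N^1(X)_\RR$, apply $\delta^A(L_k)\le\delta(L_k)$ for each $k$, and pass to the limit.

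Concretely, I would argue as follows. Let $\xi$ be an ample $\RR$-line bundle. Since ampleness is an open condition and $\QQ$-points are dense in $N^1(X)_\RR$, there is a sequence of ample $\QQ$-line bundles $L_k$ with $L_k \to \xi$ as $k\to\infty$. For each $k$, Proposition \ref{prop:delta-A<=delta-Q} gives $\delta^A(L_k)\le\delta(L_k)$. Now let $k\to\infty$: by Theorem \ref{thm:delta-A-continuous}, $\delta^A(L_k)\to\delta^A(\xi)$, and by Theorem \ref{thm:delta-is-continuous}, $\delta(L_k)\to\delta(\xi)$. Taking limits in the inequality preserves it (the set $\{(a,b): a\le b\}$ is closed), so $\delta^A(\xi)\le\delta(\xi)$, as desired.

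There is really no serious obstacle here; the content has all been front-loaded into the two continuity theorems and into Proposition \ref{prop:delta-A<=delta-Q}. The only point that deserves a word of care is making sure the approximating classes $L_k$ may be chosen to be genuine $\QQ$-line bundles (not merely $\RR$-classes with rational coordinates in some basis that need not correspond to a sublattice) lying in the ample cone — but this is standard: the ample cone is open in $N^1(X)_\RR$, the image of $N^1(X)_\QQ$ is dense, and one may further clear denominators without leaving the ample cone, so such an approximating sequence exists. One might also remark that the argument is symmetric in spirit to how Corollary \ref{cor:s-delta=s-delta-A-R} was just obtained from Corollary \ref{cor:s-delta=s-delta-A-Q}, so the proof can be kept to one or two sentences.
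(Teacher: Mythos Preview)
Your argument is correct and is exactly the approach the paper takes: the statement is introduced there with the sentence ``We can also extend Proposition~\ref{prop:delta-A<=delta-Q} by continuity,'' and no further proof is given. Your write-up simply makes that one-line justification explicit.
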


\section{Existence of twisted K\"ahler--Einstein metrics}
\label{sec:YTD}

This section is devoted to proving the following result. 
\begin{theorem}[=Theorem \ref{thm:YTD}]
\label{thm:YTD'}
Let $\xi$ be an ample $\RR$-line bundle on $X$. Fix any smooth form $\alpha\in2\pi(c_1(X)-c_1(\xi))$ and assume that $\alpha\geq0$. Then we have
\begin{enumerate}
    \item If $\delta(\xi)>1$, then there exists $\omega\in2\pi c_1(\xi)$ such that $\Ric(\omega)=\omega+\alpha$.
    
    \item If there exists $\omega\in2\pi c_1(\xi)$ (resp. a unique $\omega\in2\pi c_1(\xi)$) such that $\Ric(\omega)=\omega+\alpha$, then $\delta(\xi)\geq1$ (resp. $\delta(\xi)>1$).
\end{enumerate}
\end{theorem}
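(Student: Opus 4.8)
The plan is to route everything through the analytic $\delta$-invariant and to use the continuity theorems of Section~\ref{sec:conti} to transfer the polarized results of \cite{BBJ18} from ample $\QQ$-classes to ample $\RR$-classes. Throughout, note that $\alpha\geq0$ forces $c_1(X)-c_1(\xi)$ to be nef, hence $s(\xi)\geq1$.

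\emph{Part (2), the easy half.} Since $\alpha$ is a semi-positive smooth representative of $2\pi(c_1(X)-c_1(\xi))$, Proposition~\ref{prop:YTD-delta-A}.(2) applies directly: the existence (resp. the unique existence) of $\omega\in2\pi c_1(\xi)$ with $\Ric(\omega)=\omega+\alpha$ already forces $\delta^A(\xi)\geq1$ (resp. $\delta^A(\xi)>1$). Combining this with $\delta(\xi)\geq\delta^A(\xi)$ from Proposition~\ref{prop:delta-A<=delta-R} gives $\delta(\xi)\geq1$ (resp. $\delta(\xi)>1$), which is exactly what part (2) asserts. No further input is needed.

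\emph{Part (1).} By Proposition~\ref{prop:YTD-delta-A}.(1) it suffices to upgrade the hypothesis $\delta(\xi)>1$ to $\delta^A(\xi)>1$; once that is done a twisted K\"ahler--Einstein metric exists for \emph{every} smooth $\alpha\in2\pi(c_1(X)-c_1(\xi))$, not merely for semi-positive ones. When $s(\xi)>1$ this is immediate from Corollary~\ref{cor:s-delta=s-delta-A-R}, since then $\min\{s(\xi),\delta^A(\xi)\}=\min\{s(\xi),\delta(\xi)\}>1$. The remaining, genuinely delicate case is $s(\xi)=1$: approximating $\xi$ by ample $\QQ$-classes and using only the qualitative equivalence $\delta(L)>1\iff\delta^A(L)>1$ for $\QQ$-line bundles --- which follows from Proposition~\ref{prop:delta-A<=delta-Q}, Proposition~\ref{prop:delta-A>1=proper} and \cite[Theorem~5.1]{BBJ18} --- only yields $\delta^A(\xi)\geq1$, since a limit of numbers $>1$ may equal $1$. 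To get past this I would establish a \emph{quantitative} lower bound: a continuous function $g\colon(0,\infty)\to(0,\infty)$, depending only on $n$, with $g(t)>1$ for every $t>1$, such that
$$
\delta^A(L)\ \geq\ g\big(\delta(L)\big)\qquad\text{for every ample }\QQ\text{-line bundle }L .
$$
This is where I would re-run the argument of \cite[v1]{BBJ18}: on a polarization one uses $m$-basis type divisors and the associated log-canonical-threshold / Moser--Trudinger estimates to compare the optimal exponent defining $\delta^A(L)$ with the averages $S_L(F)$, keeping explicit track of the constants so that $g$ stays strictly above $1$ as $\delta(L)\downarrow1$. Granting this, the proof closes: pick ample $\QQ$-classes $\xi_j\to\xi$ (possible since the ample cone is open and rational classes are dense); by Theorem~\ref{thm:delta-is-continuous}, $\delta(\xi_j)\to\delta(\xi)>1$; then by Theorem~\ref{thm:delta-A-continuous} and the quantitative bound,
$$
\delta^A(\xi)=\lim_{j\to\infty}\delta^A(\xi_j)\ \geq\ \liminf_{j\to\infty}g\big(\delta(\xi_j)\big)=g\big(\delta(\xi)\big)>1 ,
$$
and Proposition~\ref{prop:YTD-delta-A}.(1) produces the desired metric. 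The same scaling bookkeeping, applied to $\delta\xi$ (resp. to $s(\xi)\xi$) in place of $\xi$, should then give Theorem~\ref{thm:YTD-any-alpha}.

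\emph{Main obstacle.} Everything reduces to the quantitative inequality $\delta^A(L)\geq g(\delta(L))$ for ample $\QQ$-line bundles: the soft implication $\delta(L)>1\Rightarrow\delta^A(L)>1$ is not enough, because one needs a lower bound on $\delta^A(L)$ that degrades \emph{continuously} and remains $>1$ as $\delta(L)\to1^+$. Extracting such a bound, with uniform control of the constants, from the non-Archimedean / basis-divisor machinery of \cite{BBJ18} is the technical heart of the argument; the continuity inputs, the limiting procedure and part (2) are already available from the earlier sections.
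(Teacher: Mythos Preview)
Your proposal is correct and matches the paper's argument essentially verbatim: part~(2) is exactly Proposition~\ref{prop:YTD-delta-A}.(2) combined with $\delta^A\leq\delta$ (Proposition~\ref{prop:delta-A<=delta-R}), and part~(1) is the paper's scheme of approximating $\xi$ by ample $\QQ$-classes $L_i$ (chosen with $s(L_i)>1$), invoking a quantitative bound of the form $\delta^A(L)\geq g(\delta(L))$---this is Proposition~\ref{prop:delta-A>1+e}, proved via the geodesic-ray/non-Archimedean contradiction argument of \cite[v1]{BBJ18} rather than via $m$-basis type divisors, and yielding the explicit $g(1+\varepsilon)=1+\tfrac{1}{n}\bigl(1-(1+\varepsilon)^{-1/n}\bigr)$---and then passing to the limit using the continuity of $\delta$ and $\delta^A$. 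Your identification of this quantitative inequality as the technical heart, and of why the soft implication $\delta(L)>1\Rightarrow\delta^A(L)>1$ is insufficient, is exactly right.
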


We will deal with part (2) first, since it is easier.
\begin{proof}[Proof of Theorem \ref{thm:YTD'}.(2)]
Assume that there exists $\omega\in2\pi c_1(\xi)$ such that $\Ric(\omega)=\omega+\alpha$. Then by Proposition \ref{prop:YTD-delta-A}.(2), we have $\delta^A(\xi)\geq1$. If moreover $\omega$ is unique, then $\delta^A(\xi)>1$. Thus the result follows from the inequality $\delta(\xi)\geq\delta^A(\xi)$ (Proposition \ref{prop:delta-A<=delta-R}).



\end{proof}

Now we turn to the first part of Theorem \ref{thm:YTD'}. The rough idea is as follows. Assume that $\delta(\xi)>1$ and choose a sequence of ample $\QQ$-line bundles $L_i$ approximating $\xi$. We can also assume that there is a sequence of smooth semi-positive forms $\alpha_i\in2\pi(c_1(X)-c_1(L_i))$ converging smoothly to $\alpha\in2\pi(c_1(X)-c_1(\xi))$. By the continuity of $\delta$-invariant, we have $\delta(L_i)>1$. So Theorem \ref{thm:BBJ-YTD} gives $\omega_i\in2\pi c_1(L_i)$ such that $\Ric(\omega_i)=\omega_i+\alpha_i$. We wish to show that $\omega_i$ converges smoothly to the desired tKE metric in $2\pi c_1(\xi)$. To make this argument work, the key point is to establish a uniform $C^k$-bound (for any $k\geq0$) for the sequence $\{\omega_i\}$. However this is not trivial at all. Essentially, what we need is a uniform control of the twisted 
Mabuchi functionals $M_{\alpha_i}$ as $i\rightarrow\infty$. More precisely, we need the following quantitative estimate.

\begin{proposition}
\label{prop:delta-A>1+e}
Let $L$ be an ample $\QQ$-line bundle.
Assume that there is a semipositive smooth form $\alpha\in2\pi(c_1(X)-c_1(L))$ and that $\delta(L)\geq1+\varepsilon$ for some small $\varepsilon>0$. Then there exists $\varepsilon^\prime>0$ only depending on $n,\varepsilon$ such that
$\delta^A(L)\geq1+\varepsilon^\prime$.
\end{proposition}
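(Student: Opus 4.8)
\textbf{Proof proposal for Proposition \ref{prop:delta-A>1+e}.}

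The plan is to trace through the variational argument of Berman--Boucksom--Jonsson quantitatively, keeping track of how the uniform Ding-stability constant $1+\varepsilon$ propagates into a coercivity constant for the Ding functional (and hence into a lower bound for $\delta^A$ via Proposition \ref{prop:delta-A-Mabuchi}). Recall that $\delta(L)\geq 1+\varepsilon$ means $A(F)\geq(1+\varepsilon)S_L(F)$ for every prime divisor $F$ over $X$; by the valuative characterization this is equivalent to the non-Archimedean Ding functional satisfying $D^{\mathrm{NA}}\geq \frac{\varepsilon}{1+\varepsilon}J^{\mathrm{NA}}$ on the space of test configurations (or on $\mathcal{E}^{1,\mathrm{NA}}$), after normalizing. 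So the first step is to record, from \cite[v1]{BBJ18} (the version before the modifications that removed the explicit constants), the precise implication: uniform Ding-stability with slope $\frac{\varepsilon}{1+\varepsilon}$ on the non-Archimedean side yields coercivity of the Archimedean Ding functional $D_\alpha\geq \varepsilon'' J_\omega - C$ with $\varepsilon''$ depending only on $n$ and $\varepsilon$. This is exactly the quantitative content of their proof that uniform Ding-stability implies existence; the point is that their approximation/regularization scheme (solving a continuity-method family and extracting a limit of finite-energy geodesic rays) is constructive enough that the slope loss is controlled purely in terms of $n$.

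Second, I would translate this Ding coercivity into the analytic $\delta$ bound. By Proposition \ref{prop:delta-A-Mabuchi}, $\delta^A(L)$ is the supremum of $\lambda$ such that $\frac{1}{V}\int_X\log\frac{\omega_\varphi^n}{\omega^n}\omega_\varphi^n\geq\lambda(I_\omega-J_\omega)(\varphi)-C_\lambda$; equivalently $\delta^A(L)>1+\varepsilon'$ is equivalent to $D_{\alpha_{\varepsilon'}}$ being coercive for $\alpha_{\varepsilon'}$ a twist at level $1+\varepsilon'$, i.e. to coercivity of $D_\alpha$ with a definite slope. More concretely: if $D_\alpha\geq\varepsilon'' J_\omega-C$, then using the homogeneity estimate \eqref{eq:ding-ineq} for $J_\omega$ and the elementary manipulation relating a twist by $(1+\varepsilon')$ to adding $\varepsilon' J_\omega$ (since the twist $\alpha$ is semipositive, scaling the class by $1+\varepsilon'$ only helps), one gets that $D$ at twisting level $1+\varepsilon'$ is still bounded below for $\varepsilon'$ small depending on $\varepsilon''$ and $n$. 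Feeding this back through Proposition \ref{prop:delta-A-Mabuchi} gives $\delta^A(L)\geq 1+\varepsilon'$ with $\varepsilon'$ depending only on $n$ and $\varepsilon$. The semipositivity of $\alpha$ is used here precisely to ensure the relevant functionals behave monotonically under the rescaling, exactly as in the convexity remark after Theorem \ref{thm:D-M-tKE}.

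The main obstacle I expect is the first step: extracting from \cite{BBJ18} a genuinely \emph{quantitative} statement, since the published argument is often phrased as a dichotomy (stable $\Rightarrow$ exists) rather than with explicit constants. One has to revisit the destabilization argument — if $D_\alpha$ were not coercive with slope $\varepsilon''$, one produces a geodesic ray (a non-Archimedean potential) along which $D^{\mathrm{NA}}< \frac{\varepsilon}{1+\varepsilon}J^{\mathrm{NA}}$, contradicting $\delta(L)\geq 1+\varepsilon$; the care needed is in checking that the slope threshold one contradicts is exactly $\frac{\varepsilon}{1+\varepsilon}$ up to a dimensional factor, which requires the comparison $D^{\mathrm{NA}}\geq (1-\delta(L)^{-1})(I^{\mathrm{NA}}-J^{\mathrm{NA}})$ type inequality and the equivalences $J^{\mathrm{NA}}\sim I^{\mathrm{NA}}-J^{\mathrm{NA}}$ with constants depending on $n$ (the non-Archimedean analogue of \eqref{eq:I-J-lower-upper-J}). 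Once that bookkeeping is in place the rest is the soft manipulation of step two. A secondary subtlety is making sure all of this is stated for $\QQ$-line bundles only (which is the hypothesis), so that the deep results of \cite{BBJ18} apply verbatim without needing the transcendental machinery being developed in the present paper — this is consistent with the overall strategy, where the $\QQ$-case quantitative bound is later combined with the continuity of $\delta$ and $\delta^A$ to reach $\RR$-classes.
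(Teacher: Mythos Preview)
Your contradiction strategy---assume coercivity fails, extract a destabilizing geodesic ray, pass to the non-Archimedean side via the approximation by test configurations, and contradict $\delta(L)\geq 1+\varepsilon$ once the trial slope is below an explicit threshold in $n,\varepsilon$---is exactly the paper's approach, and the paper carries it out to the explicit bound $\delta^A(L)\geq 1+\tfrac{1-(1+\varepsilon)^{-1/n}}{n}$ (using $D^{\mathrm{NA}}\geq(1-(1+\varepsilon)^{-1/n})J^{\mathrm{NA}}$ from \cite[Theorem~7.3]{BBJ18}, the ``dimensional factor'' you anticipated).

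The one place your outline wobbles is step~2. The mechanism you sketch---``scaling the class by $1+\varepsilon'$'' and invoking Ding's homogeneity \eqref{eq:ding-ineq}---is vague and not obviously correct (you cannot change $L$ mid-argument, and \eqref{eq:ding-ineq} scales $\varphi$, not the class). The paper avoids any such conversion by running the contradiction directly on $M_\alpha$ rather than on $D_\alpha$: assume $M_\alpha(\phi_j)\leq\varepsilon'(I_\omega-J_\omega)(\phi_j)-j$, extract the ray, and use convexity of $M_\alpha$ together with $D_\alpha\leq M_\alpha$ to control the slope of $L_\alpha$. Since $M_\alpha$ differs from the entropy $H$ by $(I-J)$ plus a bounded term, the inequality $M_\alpha\geq\varepsilon'(I-J)-C$ is \emph{literally} the statement $\delta^A(L)\geq 1+\varepsilon'$ via Proposition~\ref{prop:delta-A-Mabuchi}, so no second step is needed. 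If you insist on going through $D_\alpha$ first, the honest step~2 is simply $M_\alpha\geq D_\alpha\geq\varepsilon''J_\omega-C\geq\tfrac{\varepsilon''}{n}(I-J)-C$ by \eqref{eq:I-J-lower-upper-J}, which already gives $\delta^A\geq 1+\varepsilon''/n$.
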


\begin{proof}
This mainly follows from the argument in \cite[v1]{BBJ18}. We sketch the proof for the reader's convenience. Fix any K\"ahler form $\omega\in2\pi c_1(L)$ and consider the following functionals:
\begin{equation}
\label{eq:def-E}
    E(\varphi):=\frac{1}{V}\int_X\varphi\omega^n-J(\varphi)=\frac{1}{(n+1)V}\int_X\sum_{i=0}^n\varphi\omega^i\wedge\omega^{n-i}_\varphi,\ \varphi\in\mathcal{H}(X,\omega).
\end{equation}
\begin{equation}
    L_\alpha(\varphi):=-\log\bigg(\frac{1}{V}\int_Xe^{f_\alpha-\varphi}\omega^n\bigg),\ \varphi\in\mathcal{H}(X,\omega).
\end{equation}
Then the $\alpha$-twisted Ding functional $D_\alpha$ can be written as
$$
D_\alpha=L_\alpha-E.
$$
Also consider the $\alpha$-twisted Mabuchi functional $M_\alpha$. By Proposition \ref{prop:delta-A-Mabuchi}, our goal is to find $\varepsilon^\prime>0$ and $C>0$ such that
$$
M_\alpha(\varphi)\geq\varepsilon^\prime (I_\omega-J_\omega)(\varphi)-C,\ \forall\varphi\in\mathcal{H}(X,\omega).
$$
For this, we will argue by contradiction. Assume that for some $\varepsilon^\prime>0$, there exists a sequence $\phi_j\in\mathcal{H}_0(X,\omega)$ such that
\begin{equation}
\label{eq:M<e(I-J)-j}
    M_\alpha(\phi_j)\leq\varepsilon^\prime(I_\omega-J_\omega)(\phi_j)-j.
\end{equation}
Now we need to work in the larger space $\mathcal{E}^1(X,\omega)$, where all the energy functionals in this paper can be defined. Most importantly, one can consider the geodesic segment $(\phi_{j,t})_{0\leq t\leq T_j}$ from $0$ to $\phi_j$. Note that $E$ is affine along geodesics, so we can assume $E(\phi_{j,t})=-t$ and moreover, we have $\sup\phi_{j,t}=0$. Using \eqref{eq:M<e(I-J)-j}, one can further extract a geodesic ray $(\phi_t)_{t\geq0}$, so that $E(\phi_t)=-t$. And also by convexity of $M_\alpha$ (together with Theorem \ref{thm:D-M-tKE}.(1) and \eqref{eq:I-J-lower-upper-J}), we have
\begin{equation}
    \label{eq:D<M<net}
    D_\alpha(\phi_t)\leq M_\alpha(\phi_t)\leq n\varepsilon^\prime t.
\end{equation}
Now as in \cite[v1, \S 3.2]{BBJ18}, we can approximate $\phi_t$ by a sequence of geodesic rays $\{\phi_{m,t}\}$ arising from test configurations, which corresponds to a sequence $\{\varphi_m\}$ in the non-Archimedean world $\mathcal{H}^{NA}$. More precisely, we have (the twist term $\alpha$ will not play any role in NA functionals so we drop it)
$$
L^{NA}(\varphi_m):=\lim_{t\rightarrow\infty}\frac{1}{t}L_\alpha(\phi_{m,t}),
$$
and (see \cite[v1, (3.1)]{BBJ18})
$$
E^{NA}(\varphi_m)=-J^{NA}(\varphi_m):=\lim_{t\rightarrow\infty}\frac{1}{t}E(\phi_{m,t})\geq-1.
$$
The key identity is \cite[v1, (3.2)]{BBJ18} (see also \cite[Lemma 5.7]{BBJ18}):
$$
\lim_{m\rightarrow\infty}L^{NA}(\varphi_m)=\lim_{t\rightarrow\infty}\frac{1}{t}L_\alpha(\phi_{t}).
$$
Then by
\eqref{eq:D<M<net}
we have
$$
\lim_{m\rightarrow\infty}L^{NA}(\varphi_m)=\lim_{t\rightarrow\infty}\frac{1}{t}\big(D_\alpha(\phi_t)+E(\phi_t)\big)\leq n\varepsilon^\prime-1.$$
On the other hand, by the proof of \cite[Theorem 7.3]{BBJ18}, the assumption $\delta(L)\geq1+\varepsilon$ implies that $D^{NA}=L^{NA}-E^{NA}\geq(1-(1+\varepsilon)^{-1/n})J^{NA}$. So we have
$$
L^{NA}(\varphi_m)\geq-(1+\varepsilon)^{-1/n}J^{NA}(\varphi_m)\geq-(1+\varepsilon)^{-1/n}.
$$
Then we would get a contradiction as soon as
$$
\varepsilon^\prime<\frac{1-(1+\varepsilon)^{-1/n}}{n}.
$$
Thus we have shown $\delta^A(L)\geq1+\frac{1-(1+\varepsilon)^{-1/n}}{n}$, as desired.
\end{proof}

Now we are able to conclude the first part of Theorem \ref{thm:YTD'}.

\begin{proof}
[Proof of Theorem \ref{thm:YTD'}.(1)]
Assume that $\delta(\xi)\geq1+\varepsilon$ for some small $\varepsilon>0$. Pick a sequence of ample $\QQ$-line bundles $L_i\rightarrow\xi$ with $s(L_i)>1$. By the continuity of $\delta$, we can assume that $\delta(L_i)\geq1+\varepsilon/2$. Then by Proposition \ref{prop:delta-A>1+e}, we can find $\varepsilon^\prime>0$ such that $\delta^A(L_i)\geq1+\varepsilon^\prime$ for all $i$. Now by the continuity of $\delta^A$, we get $\delta^A(\xi)\geq1+\varepsilon^\prime$. Thus the assertion follows from Proposition \ref{prop:YTD-delta-A}.(1).
\end{proof}
The proof of Theorem \ref{thm:YTD} is complete.

As a by product of the above argument, we obtain the following result.

\begin{theorem}[=Theorem \ref{thm:YTD-any-alpha}]
\label{thm:YTD-any-alpha'}
Let $X$ be a Fano manifold. Let $\xi$ be an ample $\RR$-line bundle on $X$. Then we have
\begin{enumerate}
    \item If $\delta(\xi)\leq s(\xi)$, then for any $\delta\in(0,\delta(\xi))$ and any smooth form $\alpha\in2\pi(c_1(X)-\delta\xi)$, there exists $\omega\in2\pi c_1(\xi)$ such that $\Ric(\omega)=\delta\omega+\alpha$. 
    
    \item If $\delta(\xi)>s(\xi)$, then for any smooth form $\alpha\in2\pi(c_1(X)-s(\xi)\xi)$, there exists $\omega\in2\pi c_1(\xi)$ such that
    $\Ric(\omega)=s(\xi)\omega+\alpha$.
\end{enumerate}
\end{theorem}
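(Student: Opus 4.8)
The plan is to reduce both parts, by an elementary rescaling, to the analytic existence statement Proposition \ref{prop:YTD-delta-A}.(1), and then to verify its hypothesis $\delta^A(\cdot)>1$ on the rescaled class by combining the inequality $\delta^A\le\delta$ with the continuity results of this section. Concretely: fix $c>0$ and a smooth form $\alpha\in2\pi(c_1(X)-c\xi)$. Since the Ricci form is scaling invariant, $\Ric(c\omega)=\Ric(\omega)$, a K\"ahler form $\omega\in2\pi c_1(\xi)$ solves $\Ric(\omega)=c\omega+\alpha$ if and only if $\tilde\omega:=c\omega\in2\pi(c\xi)$ solves the tKE equation $\Ric(\tilde\omega)=\tilde\omega+\alpha$ with twist $\alpha\in2\pi(c_1(X)-c\xi)$. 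Now $c\xi$ is a K\"ahler class, so Proposition \ref{prop:YTD-delta-A}.(1) — which imposes no positivity on the twist — applies, and using the scaling property \eqref{eq:scaling-delt-A} the whole theorem is reduced to proving $\delta^A(\xi)>c$, with $c=\delta$ in part (1) and $c=s(\xi)$ in part (2). (The class $c_1(X)-c\xi$ is in any case a genuine $(1,1)$-class, so smooth twist forms exist; in part (1) it is even K\"ahler since $\delta<s(\xi)$, while in part (2) it is nef and lies on the boundary of the K\"ahler cone.)

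Part (1) should then be immediate: here $c=\delta<\delta(\xi)\le s(\xi)$, so $\delta^A(\xi)\le\delta(\xi)\le s(\xi)$ by Proposition \ref{prop:delta-A<=delta-R}, and Corollary \ref{cor:s-delta=s-delta-A-R} gives $\delta^A(\xi)=\min\{s(\xi),\delta^A(\xi)\}=\min\{s(\xi),\delta(\xi)\}=\delta(\xi)>\delta=c$, which is what we want.

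Part (2) is where the real work lies. Now $c=s(\xi)<\delta(\xi)$, and Corollary \ref{cor:s-delta=s-delta-A-R} only yields the non-strict bound $\delta^A(\xi)\ge s(\xi)$; the task is to upgrade it to $\delta^A(\xi)>s(\xi)$, i.e., writing $\eta:=s(\xi)\xi$, to show $\delta^A(\eta)>1$ even though $s(\eta)=1$ lies on the boundary of $\mathcal{K}(X)$. I would fix $\varepsilon>0$ with $\delta(\eta)=\delta(\xi)/s(\xi)\ge1+\varepsilon$ and choose a sequence of ample $\QQ$-line bundles $L_i$ converging to $\eta$ with $\eta-L_i$ K\"ahler for every $i$ (that is, approaching $\eta$ from strictly inside the K\"ahler cone, which is possible by density of rational classes). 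Then $s(L_i)>s(\eta)=1$, so $2\pi(c_1(X)-c_1(L_i))$ contains a K\"ahler — hence semipositive — form, and by continuity of $\delta$ (Theorem \ref{thm:delta-is-continuous}) one has $\delta(L_i)\ge1+\varepsilon/2$ for $i$ large. The quantitative estimate Proposition \ref{prop:delta-A>1+e} then gives $\delta^A(L_i)\ge1+\varepsilon'$ with $\varepsilon'=\varepsilon'(n,\varepsilon/2)>0$ independent of $i$, whence continuity of $\delta^A$ (Theorem \ref{thm:delta-A-continuous}) yields $\delta^A(\eta)\ge1+\varepsilon'>1$. Thus $\delta^A(s(\xi)\xi)>1$, Proposition \ref{prop:YTD-delta-A}.(1) produces $\tilde\omega\in2\pi s(\xi)\xi$ with $\Ric(\tilde\omega)=\tilde\omega+\alpha$, and $\omega:=\tilde\omega/s(\xi)\in2\pi c_1(\xi)$ is the desired metric.

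The main obstacle is precisely this boundary strict inequality in part (2): a purely qualitative openness/continuity argument does not suffice, because the rescaled threshold $\delta^A(t\eta)$ degenerates to exactly $1$ as $t\uparrow1$, and it is the \emph{uniformity} (in the approximation) of the lower bound furnished by Proposition \ref{prop:delta-A>1+e} that allows the strict inequality to survive the limit. A minor but real technical point to handle with care is choosing the approximating $L_i$ to lie below $\eta$, so that the semipositive twist forms required to invoke Proposition \ref{prop:delta-A>1+e} are actually available.
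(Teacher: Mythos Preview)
Your proposal is correct and follows essentially the same route as the paper. For part~(1) both you and the paper use Corollary~\ref{cor:s-delta=s-delta-A-R} to get $\delta^A(\xi)=\delta(\xi)$ and then rescale into Proposition~\ref{prop:YTD-delta-A}.(1); for part~(2) the paper rescales first to $\xi':=(s(\xi)-\eta)\xi$ (so $s(\xi')>1$) and then invokes Proposition~\ref{prop:delta-A>1+e} together with continuity exactly as in the proof of Theorem~\ref{thm:YTD'}.(1), whereas you fold the two approximations into a single rational approximation of $s(\xi)\xi$ from strictly below---the same quantitative input and the same limiting argument, just parametrized differently.
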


\begin{proof}
For the first part, we have $\delta^A(\xi)=\delta(\xi)$ by Corollary \ref{cor:s-delta=s-delta-A-R}. Then the assertion follows from Proposition \ref{prop:YTD-delta-A}.(1) (by suitably scaling $\xi$). Now for the second part, it is enough to notice that the assumption $\delta(\xi)>s(\xi)$ implies that $\delta^A(\xi)>s(\xi)$. Indeed, assume that $\delta(\xi)\geq(1+\epsilon)s(\xi)$ for some $\varepsilon>0$. Consider $\xi^\prime:=(s(\xi)-\eta)\xi$, where $\eta>0$ is any sufficiently small number. Then one can make sure that $s(\xi^\prime)>1$ and $\delta(\xi^\prime)\geq1+\varepsilon/2$. So Proposition \ref{prop:delta-A>1+e} together with the continuity of $\delta$ and $\delta^A$ imply that $\delta^A(\xi^\prime)\geq1+\varepsilon^\prime$ for some $\varepsilon^\prime>0$ depending only on $n$ and $\varepsilon$. Thus $\delta^A(\xi)\geq(1+\varepsilon^\prime)(s(\xi)-\eta)$. Letting $\eta\rightarrow0$, we get $\delta^A(\xi)\geq(1+\varepsilon^\prime)s(\xi)>s(\xi)$, as desired. Then we conclude by Proposition \ref{prop:YTD-delta-A}.(1).
\end{proof}

\section{Further discussions}
\label{sec:delt-delta-A}
\subsection{More on the analytic $\delta$-invariant}
 It is reasonable to believe that
 $$
 \delta^A(L)=\delta(L)
 $$
 for any ample $\QQ$-line bundles. However it seems that the methods in this paper cannot provide a straightforward way to prove this. One obstacle comes from the argument of Proposition \ref{prop:delta-A>1+e} (i.e., the variational approach in \cite{BBJ18}), which crucially relies on the limiting behavior of Ding functionals. This unfortunately prohibits us from getting the optimal lower bound for $\delta^A$. More discussions regarding this problem will appear in a separated paper.
 We also refer the reader to \cite{L20} for some recent progress in this direction.
 

In the following, let us collect more properties of $\delta^A$.
Let $X$ be a compact K\"ahler manifold. Let $\omega\in2\pi\xi$ be a K\"ahler form.
We first recall the $\alpha$-invariant $\alpha(\xi)$ of Tian \cite{T87}:
\begin{equation}
\label{eq:def-alpha}
    \alpha(\xi):=\sup\bigg\{\lambda>0\ \bigg|
    \exists\ C_\lambda>0
    \text{ such that }
    \int_Xe^{-\lambda\varphi}\omega^n\leq C_\lambda
    \text{ for any }\varphi\in\mathcal{H}_0(X,\omega)
    \bigg\}.
\end{equation}
It is shown by Tian that one always has $\alpha(\xi)>0$. When $\xi=c_1(L)$, the above definition agrees with the algebraic definition using log canonical threshold (cf. \cite[Appendix]{CS}).
Also recall that the continuity of $\alpha$ is proved by Dervan \cite{D16}.
The $\alpha$-invariant plays significant roles in the study of canonical metrics. The following result explains the reason.
 
 \begin{proposition}
 For any K\"ahler class $\xi$, one has
 $$
 \delta^A(\xi)\geq\frac{n+1}{n}\alpha(\xi).
 $$
 \end{proposition}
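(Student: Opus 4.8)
The plan is to run the entropy (twisted Mabuchi) reformulation of $\delta^A$ from Proposition~\ref{prop:delta-A-Mabuchi} and feed Tian's $\alpha$-invariant into it through a single application of Jensen's inequality; the whole gain of the factor $\frac{n+1}{n}$ over the obvious bound $\delta^A(\xi)\ge\alpha(\xi)$ will come from one elementary comparison between $-\frac1V\int_X\varphi\,\omega_\varphi^n$ and $(I_\omega-J_\omega)(\varphi)$.

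Fix $\omega\in 2\pi\xi$. Since the entropy $\frac1V\int_X\log\frac{\omega_\varphi^n}{\omega^n}\,\omega_\varphi^n$ and $(I_\omega-J_\omega)(\varphi)$ are both unchanged when $\varphi$ is shifted by a constant, Proposition~\ref{prop:delta-A-Mabuchi} reduces the statement to: for every $\mu<\frac{n+1}{n}\alpha(\xi)$ there is $C_\mu>0$ with
\[
\frac1V\int_X\log\frac{\omega_\varphi^n}{\omega^n}\,\omega_\varphi^n\ \ge\ \mu\,(I_\omega-J_\omega)(\varphi)-C_\mu,\qquad\forall\,\varphi\in\mathcal{H}_0(X,\omega).
\]
Given such a $\mu$, I would choose $\lambda$ with $\mu n/(n+1)<\lambda<\alpha(\xi)$, so that $\frac{n+1}{n}\lambda>\mu$; by \eqref{eq:def-alpha} there is then $C_\lambda>0$ with $\int_Xe^{-\lambda\varphi}\omega^n\le C_\lambda$ for all $\varphi\in\mathcal{H}_0(X,\omega)$. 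For such a $\varphi$, applying Jensen's inequality (concavity of $\log$) to the probability measure $\omega_\varphi^n/V$ gives
\[
\log\Big(\frac1V\int_Xe^{-\lambda\varphi}\omega^n\Big)=\log\Big(\frac1V\int_Xe^{-\lambda\varphi}\frac{\omega^n}{\omega_\varphi^n}\,\omega_\varphi^n\Big)\ \ge\ -\frac{\lambda}{V}\int_X\varphi\,\omega_\varphi^n-\frac1V\int_X\log\frac{\omega_\varphi^n}{\omega^n}\,\omega_\varphi^n,
\]
hence, using $\int_Xe^{-\lambda\varphi}\omega^n\le C_\lambda$,
\[
\frac1V\int_X\log\frac{\omega_\varphi^n}{\omega^n}\,\omega_\varphi^n\ \ge\ -\frac{\lambda}{V}\int_X\varphi\,\omega_\varphi^n-\log\frac{C_\lambda}{V}.
\]

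It then remains to establish the key inequality
\[
-\frac1V\int_X\varphi\,\omega_\varphi^n\ \ge\ \frac{n+1}{n}\,(I_\omega-J_\omega)(\varphi),\qquad\varphi\in\mathcal{H}_0(X,\omega),
\]
for which I would use the Monge--Amp\`ere energy $E(\varphi)=\frac1V\int_X\varphi\,\omega^n-J_\omega(\varphi)=\frac1{(n+1)V}\int_X\sum_{i=0}^n\varphi\,\omega^i\wedge\omega_\varphi^{n-i}$ from \eqref{eq:def-E}. A direct computation from the definitions gives $(I_\omega-J_\omega)(\varphi)=E(\varphi)-\frac1V\int_X\varphi\,\omega_\varphi^n$. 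On the other hand, since $-\varphi\ge0$, every summand of $-E(\varphi)$ is nonnegative and the $i=0$ summand alone equals $\frac1{(n+1)V}\int_X(-\varphi)\,\omega_\varphi^n$, so $-E(\varphi)\ge\frac1{n+1}\big(-\frac1V\int_X\varphi\,\omega_\varphi^n\big)$. Writing $A:=-\frac1V\int_X\varphi\,\omega_\varphi^n\ge0$, these two facts read $(I_\omega-J_\omega)(\varphi)=A+E(\varphi)\le A-\frac{A}{n+1}=\frac{n}{n+1}A$, which is the key inequality. Plugging it into the previous display, and using $(I_\omega-J_\omega)(\varphi)\ge\frac1nJ_\omega(\varphi)\ge0$ together with $\frac{n+1}{n}\lambda>\mu$, yields the required bound with $C_\mu=\log(C_\lambda/V)$; letting $\mu\uparrow\frac{n+1}{n}\alpha(\xi)$ finishes the proof.

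I do not anticipate a real obstacle. The only point is to recognize that the improvement from $\delta^A\ge\alpha$ to $\delta^A\ge\frac{n+1}{n}\alpha$ is produced entirely by the comparison $-\frac1V\int_X\varphi\,\omega_\varphi^n\ge\frac{n+1}{n}(I_\omega-J_\omega)(\varphi)$, and that this comparison is nothing but the observation that, in the average $-E(\varphi)$ of the $n+1$ nonnegative mixed terms, the single term $\frac1{(n+1)V}\int_X(-\varphi)\,\omega_\varphi^n$ already accounts for $\frac1{n+1}$ of the total quantity $A$.
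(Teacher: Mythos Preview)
Your proof is correct and follows essentially the same approach as the paper: both apply Jensen's inequality to $\log\big(\frac1V\int_Xe^{-\lambda\varphi}\omega^n\big)$ against the probability measure $\omega_\varphi^n/V$, then use $\varphi\le0$ together with the structure of the energy functionals to upgrade the trivial bound to the factor $\frac{n+1}{n}$, and finally invoke Proposition~\ref{prop:delta-A-Mabuchi}. The only cosmetic difference is that the paper routes the comparison through $I_\omega$ (using $-\frac1V\int_X\varphi\,\omega_\varphi^n\ge I_\omega(\varphi)$ and then $I_\omega\ge\frac{n+1}{n}(I_\omega-J_\omega)$, which is the standard inequality $I-J\le nJ$ from \eqref{eq:I-J-lower-upper-J}), whereas you route it through $E$ by extracting the $i=0$ summand; these are equivalent rearrangements of the same positivity.
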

 
 \begin{proof}
 This is well known and follows easily from Jensen's inequality (see \cite[Theorem 7.13]{T00}).
 Indeed, pick any $\lambda\in(0,\alpha(\xi))$. Then for some $C_\lambda>0$, we have
 \begin{equation*}
     \begin{split}
         C_\lambda\geq\log\bigg(\frac{1}{V}\int_Xe^{-\lambda\varphi}\omega^n\bigg)&=\log\bigg(\frac{1}{V}\int_Xe^{-\lambda\varphi}\frac{\omega^n}{\omega^n_\varphi}\omega_\varphi^n\bigg)\\
         &\geq-\frac{\lambda}{V}\int_X\varphi\omega^n_\varphi+\frac{1}{V}\int_X\log\frac{\omega^n}{\omega_\varphi^n}\omega^n_\varphi\\
         &\geq\frac{\lambda}{V}I_\omega(\varphi)-\frac{1}{V}\int_X\log\frac{\omega^n_\varphi}{\omega^n}\omega^n_\varphi\\
         &\geq\frac{(n+1)\lambda}{n}(I_\omega-J_\omega)(\varphi)-\frac{1}{V}\int_X\log\frac{\omega^n_\varphi}{\omega^n}\omega^n_\varphi,\ \forall\varphi\in\mathcal{H}_0(X,\omega).\\
     \end{split}
 \end{equation*}
 So we have
 $$
 \frac{1}{V}\int_X\log\frac{\omega^n_\varphi}{\omega^n}\omega^n_\varphi\geq\frac{(n+1)\lambda}{n}(I_\omega-J_\omega)(\varphi)-C_\lambda.
 $$
 So Proposition \ref{prop:delta-A-Mabuchi} implies that $\delta^A(\xi)\geq\frac{(n+1)\lambda}{n}$, hence finishing the proof.
 \end{proof}
 
 For ample $\RR$-line bundles, one can also bound $\delta^A$ from above using $\alpha$-invariant.
 \begin{proposition}
 For any ample $\RR$-line bundle $\xi$,  we have
 $$
 \delta^A(\xi)\leq\delta(\xi)\leq(n+1)\alpha(\xi).
 $$
 \end{proposition}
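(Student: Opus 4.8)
The first of the two inequalities is precisely Proposition \ref{prop:delta-A<=delta-R}, so the task is to prove $\delta(\xi)\le(n+1)\alpha(\xi)$. The plan is to establish this valuatively for ample $\QQ$-line bundles first, and then pass to ample $\RR$-classes by continuity.

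So let $L$ be an ample $\QQ$-line bundle. For a prime divisor $F$ over $X$, living on a birational model $\pi\colon Y\to X$, set $\tau_L(F):=\sup\{t\ge 0\mid\vol(\pi^*L-tF)>0\}$, the pseudoeffective threshold. Recall that the algebraic description of Tian's $\alpha$-invariant --- which matches the analytic definition \eqref{eq:def-alpha}, cf.\ \cite[Appendix]{CS} --- reads $\alpha(L)=\inf_F A(F)/\tau_L(F)$, the infimum being over all prime divisors over $X$. Since $\delta(L)=\inf_F A(F)/S_L(F)$ runs over the same index set, it suffices to prove the pointwise estimate $S_L(F)\ge\frac{1}{n+1}\tau_L(F)$ for every $F$. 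Write $T:=\tau_L(F)$. For $t=sT$ with $s\in[0,1]$ one has $\pi^*L-tF=(1-s)\,\pi^*L+s\,(\pi^*L-TF)$, where $\pi^*L-TF$ is pseudoeffective and, by the definition of $T$ together with the continuity of the volume function (cf.\ \cite{L04}), satisfies $\vol(\pi^*L-TF)=0$. Hence the concavity of $\vol(\cdot)^{1/n}$ on the closure of the big cone (Brunn--Minkowski / Khovanskii--Teissier) gives $\vol(\pi^*L-tF)^{1/n}\ge(1-s)\vol(L)^{1/n}$, i.e.\ $\vol(\pi^*L-tF)\ge(1-t/T)^n\vol(L)$. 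Integrating in $t$,
$$
S_L(F)=\frac{1}{\vol(L)}\int_0^T\vol(\pi^*L-tF)\,dt\ \ge\ \int_0^T(1-t/T)^n\,dt=\frac{T}{n+1},
$$
which is the claimed estimate; taking the infimum over $F$ yields $\delta(L)\le(n+1)\alpha(L)$ for ample $\QQ$-line bundles.

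Finally, given an ample $\RR$-line bundle $\xi$, choose ample $\QQ$-line bundles $L_i\to\xi$ in $N^1(X)_\RR$. Then $\delta(L_i)\le(n+1)\alpha(L_i)$ for each $i$, and letting $i\to\infty$ while invoking the continuity of $\delta(\cdot)$ on the big cone (Theorem \ref{thm:delta-is-continuous}) and the continuity of $\alpha(\cdot)$ (Dervan \cite{D16}) gives $\delta(\xi)\le(n+1)\alpha(\xi)$. Together with Proposition \ref{prop:delta-A<=delta-R} this completes the argument. I expect the only non-formal ingredient to be the concavity of $\vol^{1/n}$ up to the boundary of the big cone together with the vanishing $\vol(\pi^*L-TF)=0$; both are standard facts about the volume function, so the main point is really just to invoke them correctly on the birational model $Y$, the rest being the routine approximation step.
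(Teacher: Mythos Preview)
Your proof is correct and follows the same overall architecture as the paper: establish $\delta(L)\le(n+1)\alpha(L)$ for ample $\QQ$-line bundles and then extend to $\RR$-classes by the continuity of $\delta$ and $\alpha$, with the first inequality coming from Proposition~\ref{prop:delta-A<=delta-R}. The only difference is that the paper simply invokes \cite[Theorem~A]{BJ17} for the $\QQ$-case, whereas you reproduce its proof (the pointwise estimate $S_L(F)\ge\tau_L(F)/(n+1)$ via log-concavity of the volume); both routes are valid and yield the same conclusion.
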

 
 \begin{proof}
 By \cite[Theorem A]{BJ17} and the continuity of $\alpha$ and $\delta$, we have 
 $$\delta(\xi)\leq(n+1)\alpha(\xi)$$
 for any ample $\RR$-line bundle $\xi$.
 So the assertion follows from Proposition \ref{prop:delta-A<=delta-R}.
 \end{proof}
 
 The following result gives a Bishop type volume estimate for ample $\RR$-line bundles. See also \cite{Z20} for related discussions.
 \begin{proposition}
 For any ample $\RR$-line bundle $\xi$,  we have
 $$
 \delta^A(\xi)^n\cdot\vol(\xi)\leq(n+1)^n.
 $$
 \end{proposition}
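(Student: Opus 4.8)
The plan is to combine Proposition~\ref{prop:delta-A<=delta-R} with a valuative volume estimate in the spirit of Fujita and Liu. Since $\delta^A(\xi)\le\delta(\xi)$, it suffices to prove the sharper inequality
$$
\delta(\xi)^n\cdot\vol(\xi)\le(n+1)^n
$$
for every ample $\RR$-line bundle $\xi$. Both sides are continuous in $\xi$ --- the volume function is continuous on $N^1(X)_\RR$ and $\delta(\cdot)$ is continuous by Theorem~\ref{thm:delta-is-continuous} --- and ample $\QQ$-classes are dense in the ample cone, so I may assume $\xi=c_1(L)$ for an ample $\QQ$-line bundle $L$.

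Now fix a point $p\in X$ lying in the smooth locus, let $\pi\colon Y\to X$ be its blow-up with exceptional divisor $E$, and regard $E$ as a prime divisor over $X$; then $A(E)=n$. From the definition of the $\delta$-invariant,
$$
\delta(\xi)\le\frac{A(E)}{S_\xi(E)}=\frac{n}{S_\xi(E)},
$$
so everything reduces to the lower bound $S_\xi(E)\ge\frac{n}{n+1}\vol(\xi)^{1/n}$. The key point is that for any sufficiently divisible $m$ and any rational $t\ge0$ with $mt\in\ZZ$, the sections of $m\pi^*L-mtE$ on $Y$ are exactly the sections of $mL$ vanishing at $p$ to order $\ge mt$, and vanishing to that order imposes $\mathrm{length}(\mathcal O_X/\mathfrak m_p^{\,mt})=\binom{mt+n-1}{n}$ linear conditions; hence
$$
h^0\bigl(Y,m(\pi^*\xi-tE)\bigr)\ \ge\ h^0(X,mL)-\binom{mt+n-1}{n}.
$$
Dividing by $m^n/n!$ and letting $m\to\infty$ yields $\vol(\pi^*\xi-tE)\ge\vol(\xi)-t^n$, and by continuity of $\vol$ this holds for all real $t\ge0$ (the right-hand side being read as $0$ once it is negative, since $\vol\ge0$ anyway). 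Writing $a:=\vol(\xi)^{1/n}$ we obtain
$$
S_\xi(E)=\frac{1}{\vol(\xi)}\int_0^\infty\vol(\pi^*\xi-tE)\,dt\ \ge\ \frac{1}{a^n}\int_0^{a}(a^n-t^n)\,dt=\frac{1}{a^n}\cdot\frac{n}{n+1}a^{n+1}=\frac{n}{n+1}\vol(\xi)^{1/n}.
$$
Plugging this back gives $\delta(\xi)\le n/S_\xi(E)\le(n+1)\vol(\xi)^{-1/n}$, i.e.\ $\delta(\xi)^n\vol(\xi)\le(n+1)^n$, and therefore $\delta^A(\xi)^n\vol(\xi)\le(n+1)^n$.

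The argument is essentially Fujita's (and Liu's) proof of the bound $\vol(-K_X)\le(n+1)^n$ for K-semistable Fano manifolds, applied here to the auxiliary divisor $E$; one could also simply invoke that result after reducing to $\QQ$-classes. The only genuine input is the volume inequality $\vol(\pi^*\xi-tE)\ge\vol(\xi)-t^n$; everything else is bookkeeping, the two points needing mild care being the $\RR$-to-$\QQ$ reduction (handled by continuity of $\delta$ and $\vol$) and the choice of $p$ in the smooth locus when $X$ is only normal with klt singularities. The estimate is sharp: for $X=\PP^n$ and $\xi=c_1(\mathcal O_{\PP^n}(1))$ one has $\delta(\xi)=\delta^A(\xi)=n+1$ and $\vol(\xi)=1$, so equality holds.
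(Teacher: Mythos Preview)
Your proof is correct and follows essentially the same route as the paper: reduce to the inequality $\delta(\xi)^n\vol(\xi)\le(n+1)^n$ via Proposition~\ref{prop:delta-A<=delta-R}, then pass between $\QQ$- and $\RR$-classes by continuity. The only difference is that the paper simply cites \cite[Theorem~D]{BJ17} for the $\delta$-volume bound, whereas you unpack that reference and reproduce the Fujita--Liu argument (blow up a smooth point, use $A(E)=n$ and the elementary estimate $\vol(\pi^*\xi-tE)\ge\vol(\xi)-t^n$ to bound $S_\xi(E)$ from below). A cosmetic variation is that you invoke the continuity of $\delta$ (Theorem~\ref{thm:delta-is-continuous}) while the paper invokes the continuity of $\delta^A$; since both are available, this is immaterial. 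Your remark on sharpness for $(\PP^n,\mathcal O(1))$ is also correct.
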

 
 \begin{proof}
 This follows from \cite[Theorem D]{BJ17}, Proposition \ref{prop:delta-A<=delta-R} and the continuity of $\delta^A(\cdot)$ and $\vol(\cdot)$.
 \end{proof}
 
 So we are led to the following questions. 
  \begin{question}[Lower bound of $\alpha$-invariant]
Do we have
 $$
\alpha(\xi)\geq\frac{\delta^A(\xi)}{n+1}
 $$
for any K\"ahler class $\xi$? 
 \end{question}
 
 \begin{question}[Volume comparison for K\"ahler classes]
Do we have
 $$
 \delta^A(\xi)^n\cdot[\xi]^n\leq(n+1)^n
 $$
 for any K\"ahler class $\xi$? 
 \end{question}
 To answer these questions, a suitable definition of 'Newton--Okounkov bodies' for K\"ahler classes would probably help; see \cite{BB17a} for some related discussions. 
 
\subsection{Constant scalar curvature K\"ahler metrics}
\label{sec:cscK}
Analytic $\delta$-invariant also play a role in finding the constant scalar curvature K\"ahler (cscK) metrics. Indeed, as we have seen in Proposition \ref{prop:delta-A-Mabuchi}, $\delta^A(\xi)$ can be characterized as the optimal coercivity constant for the \emph{entropy}
\begin{equation}
    H(\varphi):=\frac{1}{V}\int_X\log\frac{\omega_\varphi^n}{\omega^n}\omega^n_\varphi.
\end{equation}
Now recall that, in the view of \cite{CC18}, to find a cscK metric in the K\"ahler class $2\pi\xi$, one needs to check the coercivity of the following K-energy:
\begin{equation}
    K(\varphi):=H(\varphi)+\mathcal{J}(\varphi),
\end{equation}
where
\begin{equation}
    \mathcal{J}(\varphi):=n\frac{(-K_X)\cdot\xi^{n-1}}{\xi^n}E(\varphi)-\frac{1}{V}\int_X\varphi\Ric(\omega)\wedge\sum_{i=0}^{n-1}\omega^i\wedge\omega^{n-1-i}_\varphi
\end{equation}
 is the \emph{energy} part of $K(\cdot)$. Regarding $\mathcal{J}(\cdot)$, one can also consider its coercivity threshold (cf. \cite{D19})
 \begin{equation}
     \gamma(\xi):=\sup\bigg\{\epsilon\in\RR\ \bigg|\ \exists\  C_\epsilon>0\ \text{s.t. }\mathcal{J}(\varphi)\geq\epsilon(I-J)(\varphi)-C_\epsilon,\ \forall\varphi\in\mathcal{H}(X,\omega)\bigg\}.
 \end{equation}
 Thus by \cite{CC18}, there exists a unique cscK metric in $2\pi\xi$ if
 \begin{equation}
     \delta^A(\xi)+\gamma(\xi)>0.
 \end{equation}

So a basic question is to understand how small $\gamma(\xi)$ can be, which is intimately related to the J-stability literature. For precise estimates of the $\gamma$-invariant, we refer the reader to \cite{D19}. Below we state a result that is probably more friendly for calculations. Following the proof of \cite[Theorem 1.1]{LSY} (cf. also \cite{W06}), one can easily obtain the following criterion for the existence of cscK metrics.
 
 \begin{corollary}
 Let $X$ be a compact K\"ahler manifold of dimension $n$. Let $\xi$ be a K\"ahler class on $X$. Assume that
 \begin{enumerate}
     \item 
     $
 K_X+\delta^A(\xi)\xi\text{ is K\"ahler}
 $,
 \item
 $
 \delta^A(\xi)>n\mu(\xi)-(n-1)s(\xi),
 $
 \end{enumerate}
 then there exists a unique cscK metric in $2\pi\xi$. Here $\mu(\xi):=\frac{-K_X\cdot\xi^{n-1}}{\xi^n}$ denotes the \emph{slope} of $\xi$.
 \end{corollary}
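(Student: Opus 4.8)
The plan is to reduce the statement, via the theorem of Chen--Cheng \cite{CC18}, to the single inequality $\delta^A(\xi)+\gamma(\xi)>0$, and then to produce the needed lower bound on the energy threshold $\gamma(\xi)$ by adapting the argument of \cite[Theorem 1.1]{LSY} (see also \cite{W06}), with Tian's $\alpha$-invariant replaced throughout by the analytic $\delta$-invariant. As recalled above, a (unique) cscK metric exists in $2\pi\xi$ as soon as the K-energy $K=H+\mathcal{J}$ is coercive, and the latter holds once $\delta^A(\xi)+\gamma(\xi)>0$: indeed, by Proposition \ref{prop:delta-A-Mabuchi} one has $H(\varphi)\geq\lambda(I_\omega-J_\omega)(\varphi)-C_\lambda$ for every $\lambda<\delta^A(\xi)$, while $\mathcal{J}(\varphi)\geq\epsilon(I_\omega-J_\omega)(\varphi)-C_\epsilon$ for every $\epsilon<\gamma(\xi)$ by definition of $\gamma(\xi)$, and these estimates add. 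So it suffices to prove the bound
$$
\gamma(\xi)\ \geq\ (n-1)s(\xi)-n\mu(\xi),
$$
for then hypothesis (2) yields $\delta^A(\xi)+\gamma(\xi)>\big(n\mu(\xi)-(n-1)s(\xi)\big)+\big((n-1)s(\xi)-n\mu(\xi)\big)=0$.

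To establish the displayed bound on $\gamma(\xi)$ I would argue as in \cite{LSY,W06}. Fix $\omega\in2\pi\xi$. Hypothesis (1) lets us pick a Kähler form $\chi\in2\pi(K_X+\delta^A(\xi)\xi)$, so that $\Ric(\omega)=\delta^A(\xi)\omega-\chi+\idd u$ for some $u\in C^\infty(X,\RR)$ by the $\partial\bar\partial$-lemma. Plugging this into the definition of $\mathcal{J}$, integrating by parts to dispose of the bounded $\idd u$ term, and using the identity
$$
\frac1V\int_X\varphi\,\omega\wedge\sum_{i=0}^{n-1}\omega^i\wedge\omega_\varphi^{n-1-i}=nE(\varphi)+(I_\omega-J_\omega)(\varphi),
$$
one is reduced to bounding from below, up to an additive constant, the J-type functional $\varphi\mapsto\frac1V\int_X\varphi\,\chi\wedge\sum_{i=0}^{n-1}\omega^i\wedge\omega_\varphi^{n-1-i}$ attached to the Kähler form $\chi$, whose cohomological slope relative to $\xi$ equals $\delta^A(\xi)-\mu(\xi)$. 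Writing moreover $c_1(X)=s\xi+\big(c_1(X)-s\xi\big)$ with $c_1(X)-s\xi$ Kähler for each $s<s(\xi)$, one absorbs the remaining positivity of $c_1(X)$ into a second Kähler form and estimates the resulting J-type functionals against $E$ and $I_\omega-J_\omega$; the decisive point, exactly as in \cite{W06}, is that each mixed term $\chi\wedge\omega^i\wedge\omega_\varphi^{n-1-i}$ costs at most a factor $n-1$ (rather than $n$) relative to $I_\omega-J_\omega$. Letting $s\uparrow s(\xi)$ then gives $\mathcal{J}(\varphi)\geq\big((n-1)s(\xi)-n\mu(\xi)\big)(I_\omega-J_\omega)(\varphi)-C$, which is the claimed lower bound for $\gamma(\xi)$.

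The main obstacle is precisely this last J-functional estimate: bounding $\frac1V\int_X\varphi\,\chi\wedge\sum_i\omega^i\wedge\omega_\varphi^{n-1-i}$ by $I_\omega-J_\omega$ with the sharp dimensional constant is a genuine PDE input --- a form of Weinkove's criterion for lower boundedness of the Mabuchi functional via the J-flow --- rather than a formal manipulation, and it is exactly here that both hypotheses are used: hypothesis (1) to carry out the decomposition $\Ric(\omega)=\delta^A(\xi)\omega-\chi+\idd u$ that makes the entropy's coercivity constant $\delta^A(\xi)$ available on the energy side, and hypothesis (2) to win the ensuing numerical comparison. Everything else --- the reduction to $\delta^A(\xi)+\gamma(\xi)>0$, the entropy estimate of Proposition \ref{prop:delta-A-Mabuchi}, and the bookkeeping of slopes and thresholds --- is routine.
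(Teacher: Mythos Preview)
Your reduction to $\delta^A(\xi)+\gamma(\xi)>0$ is fine, but the subsequent claim that hypothesis~(1) yields the standalone bound $\gamma(\xi)\geq(n-1)s(\xi)-n\mu(\xi)$ is where the argument breaks. When you plug $\Ric(\omega)=\delta^A(\xi)\omega-\chi+\idd u$ into $\mathcal{J}$ and use your identity $\mathcal{J}_\omega=nE+(I_\omega-J_\omega)$, what you actually get is
$$
\mathcal{J}(\varphi)=n(\mu-\delta^A)E(\varphi)-\delta^A(\xi)(I_\omega-J_\omega)(\varphi)+\mathcal{J}_\chi(\varphi)+O(1),
$$
and the term $-\delta^A(\xi)(I_\omega-J_\omega)$ sits there with the wrong sign. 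No estimate on the $\chi$-functional alone will remove it, so you cannot extract from this a lower bound on $\gamma(\xi)$ that is independent of $\delta^A(\xi)$. If instead you use only the $s(\xi)$-decomposition of $c_1(X)$, hypothesis~(1) drops out of the computation entirely, and the inequality $\gamma(\xi)\geq(n-1)s(\xi)-n\mu(\xi)$ is simply not available in general.

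The argument of \cite{LSY} that the paper has in mind does \emph{not} factor through $\gamma$. One bounds $K=H+\mathcal{J}$ directly: the entropy estimate $H\geq(\delta^A-\epsilon)(I_\omega-J_\omega)-C$ from Proposition~\ref{prop:delta-A-Mabuchi} exactly cancels the bad $-\delta^A(I_\omega-J_\omega)$ above, leaving
$$
K(\varphi)\geq-\epsilon(I_\omega-J_\omega)(\varphi)+\Big(\mathcal{J}_\chi(\varphi)-n(\delta^A-\mu)E(\varphi)\Big)+O(1),
$$
i.e.\ $-\epsilon(I_\omega-J_\omega)$ plus the J-flow functional attached to the K\"ahler form $\chi\in2\pi(\delta^A\xi-c_1(X))$. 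Hypothesis~(2), rewritten via $c_1(X)-s(\xi)\xi\in\overline{\mathcal{K}(X)}$ as the class inequality $n(\delta^A-\mu)\xi-(n-1)(\delta^A\xi-c_1(X))>0$, is then exactly the numerical input (as in \cite{W06}) guaranteeing coercivity of this residual J-functional. Thus hypothesis~(1) produces the K\"ahler form $\chi$ that makes the cancellation with the entropy possible, and hypothesis~(2) handles what remains; the two steps cannot be decoupled into separate bounds on $\delta^A$ and $\gamma$ as you propose.
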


 In some circumstances (e.g. when $c_1(X)\leq0$), the above conditions are not hard to verify; see \cite{D20} for a beautiful application. More algebraically, one can ask the following question.
 
 \begin{question}
 Let $(X,L)$ be a polarized pair. Assume that
 \begin{enumerate}
     \item 
$
 K_X+\delta(L)L\text{ is ample}
$,
\item
$
 \delta(L)>n\mu(L)-(n-1)s(L).
 $
 \end{enumerate}
 Is it true that $(X,L)$ is uniformly K-stable (in the sense of \cite{D16b})?
 \end{question}

 \subsection{The greatest Ricci lower bound for big classes}
 Let $(X,\omega)$ be a Fano manifold with a K\"ahler form $\omega\in2\pi\xi$.
 Then the greatest Ricci lower bound $\beta(\xi)$ can also be characterized by Monge--Amp\`ere equations. More precisely, we have
 \begin{equation}
     \beta(\xi)=\sup\bigg\{\beta\in(0,s(\xi)]\ \bigg|\ \omega_\varphi^n=e^{f-\beta\varphi}\omega^n\text{ is solvable for any }f\in C^\infty(X,\omega)\bigg\}.
 \end{equation}
This characterization allows us to extend the definition of $\beta(\xi)$ to big classes, as one can still make sense of Monge--Amp\`ere equations (see \cite{BEGZ}). Then it is natural to ask the following
question.
\begin{question}
Do we have
$$\beta(\xi)=\min\{s(\xi),\delta(\xi)\}$$
for any big $\RR$-line bundle $\xi$? 
\end{question}

Another interesting question is about the rationality of the greatest Ricci lower bound. It is shown in \cite{BLZ} that, $\beta(-K_X)$ is always a rational number, whose proof relies crucially on the deep analysis \cite{DS,RS} for the Gromov--Hausdorff limit arising from the continuity method. Then one can ask the following question.

\begin{question}
Is it true that
$$
\min\{s(L),\delta(L)\}\in\QQ
$$
for any big $\QQ$-line bundle $L$. 
\end{question}

Since the ample cone of a Fano manifold is a polyhedral cone (by Mori's cone theorem), $s(L)$ is easily seen to be rational. So one essentially needs to check the rationality of $\delta(L)$ (provided that $\delta(L)\leq s(L)$).

\subsection{Generalization}

Finally we remark that our approach also works for the $\theta$-twisted setting as in \cite{BBJ18}. To be more precise, let $\theta$ be a quasi-positive klt current on $X$. Then one can define $\delta_\theta$ and $\delta^A_\theta$ analogously. Indeed, $\delta_\theta$ can be defined for any ample $\RR$-line bundles using \cite[Definition 7.2]{BBJ18}. We now give the definition of $\delta^A_\theta$ for the reader's convenience. Pick a smooth representative $\theta_0\in[\theta]$, then one can write $\theta=\theta_0+\idd \psi$ for some quasi-plurisubharmonic function $\psi$ on $X$. Then for any K\"ahler class $\xi$, pick a K\"ahler form $\omega\in 2\pi\xi$ and put
$$
\delta^A_\theta(\xi):=\sup\bigg\{\lambda>0\ \bigg|\ 
    \exists\ C_\lambda>0\text{ s.t. }
    \int_Xe^{-\lambda\varphi-\psi}\omega^n\leq C_\lambda e^{\lambda J_\omega(\varphi)},\ 
    \forall\varphi\in\mathcal{H}_0(X,\omega)
    \bigg\}.
$$
The continuity of $\delta_\theta$ and $\delta^A_\theta$ can be proved following the same lines in Section \ref{sec:conti}. Note that adding a smooth form to $\theta$ will not affect $\delta_\theta$ and $\delta^A_\theta$, so one can always assume $\theta$ to be semipositive. Now to extend Theorem \ref{thm:YTD} to this setting, the only major difference in the proof is that one should work in the larger $\mathcal{E}^1$ space (we refer the reader to \cite{BBEGZ} for its definition). More precisely, in the definition of $\delta^A_\theta$ one can let $\varphi$ run through $\mathcal{E}^1(X,\omega)$, which will not change the value of $\delta^A_\theta$ by Demailly's regularization. Then one can show that (by \cite[Proposition 4.11]{BBEGZ})
$$
\delta_\theta^A(\xi)=\sup\bigg\{\lambda>0\bigg|
    \exists\ C_\lambda>0\text{ s.t. }
    \frac{1}{V}\int_X\log\frac{\omega_\varphi^n}{e^{-\psi}\omega^n}\omega_\varphi^n\geq\lambda(I_\omega-J_\omega)(\varphi)-C_\lambda,
    \forall\varphi\in\mathcal{E}^1(X,\omega)
    \bigg\}.
$$
    This in particular generalizes Proposition \ref{prop:delta-A-Mabuchi} to the $\theta$-twisted setting. And one further obtains that $\delta^A_\theta(\xi)>1$ if and only if the $\theta$-twisted functional $D_\theta/M_\theta$ is coercive on $\mathcal{E}^1(X,\omega)$, in which case one can find  $\varphi\in\mathcal{E}^1(X,\omega)$ solving $\Ric(\omega_\varphi)=\omega_\varphi+\theta$ (by \cite[Lemma 2.8]{BBJ18}). Moreover by \cite[Theorem 2.19]{BBJ18} we have a version of Proposition \ref{prop:YTD-delta-A} in this $\theta$-twisted setting as well. And also the relation $\delta^A_\theta\leq\delta_\theta$ holds as in Proposition \ref{prop:delta-A<=delta-R}.
Finally, note that the proof of Proposition \ref{prop:delta-A>1+e} can be carried out over $\mathcal{E}^1(X,\omega)$ following the lines in \cite[\S 5]{BBJ18} (see especially Lemma 5.7.(iii) in \emph{loc. cit.}). With all these ingredients combined, we obtain the following result, generalizing Theorem \ref{thm:YTD}.

\begin{theorem}
\label{thm:theta-YTD}
Let $\theta$ be a semipositive klt current $\theta$. Let $\xi$ be an ample $\RR$-line bundle such that $c_1(\xi)=c_1(X)-[\theta]$. Then we have
\begin{enumerate}
    \item If $\delta_\theta(\xi)>1$, then there exists $\omega\in2\pi c_1(\xi)$ such that $\Ric(\omega)=\omega+2\pi\theta$.
    
    \item If there exists $\omega\in2\pi c_1(\xi)$ (resp. a unique $\omega\in2\pi c_1(\xi)$) such that $\Ric(\omega)=\omega+2\pi\theta$, then $\delta_\theta(\xi)\geq1$ (resp. $\delta_\theta(\xi)>1$).
\end{enumerate}

\end{theorem}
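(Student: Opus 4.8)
The plan is to follow the proof of Theorem \ref{thm:YTD'} line by line, replacing every energy functional by its $\theta$-twisted counterpart and enlarging the class of test functions from $\mathcal{H}(X,\omega)$ to $\mathcal{E}^1(X,\omega)$ so that the singular current $\theta$ can be accommodated (cf. \cite{BBEGZ}). Write $\theta=\theta_0+\idd\psi$ with $\theta_0$ smooth and $\psi$ a $\theta_0$-plurisubharmonic function with $e^{-\psi}\in L^1(X)$; since adding a smooth form changes neither $\delta_\theta$ nor $\delta^A_\theta$ we may take $\theta\ge 0$. Throughout, $\psi$ is held fixed while $\xi$ (equivalently $\omega\in 2\pi c_1(\xi)$) varies, and the relevant objects are the twisted Ding and Mabuchi functionals $D_\theta$, $M_\theta$ and the twisted entropy $\frac{1}{V}\int_X\log\frac{\omega_\varphi^n}{e^{-\psi}\omega^n}\,\omega_\varphi^n$.

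First I would record the structural inputs, each of which is the $\theta$-twisted, $\mathcal{E}^1$-version of a result already in this paper. By \cite[Proposition 4.11]{BBEGZ} (with weight $e^{-\psi}$), $\delta^A_\theta(\xi)$ equals the optimal coercivity constant of the twisted entropy against $I_\omega-J_\omega$ over $\mathcal{E}^1(X,\omega)$; combined with the boundedness of the twisted Ricci potential this gives the analogue of Proposition \ref{prop:delta-A>1=proper}, so $\delta^A_\theta(\xi)>1$ iff $D_\theta/M_\theta$ is coercive on $\mathcal{E}^1$, in which case \cite[Lemma 2.8]{BBJ18} produces $\varphi\in\mathcal{E}^1(X,\omega)$ with $\Ric(\omega_\varphi)=\omega_\varphi+2\pi\theta$. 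The $\theta$-twisted analogue of Proposition \ref{prop:YTD-delta-A} then follows from \cite[Theorem 2.19]{BBJ18}, and $\delta^A_\theta(\xi)\le\delta_\theta(\xi)$ for ample $\QQ$-line bundles follows as in Proposition \ref{prop:delta-A<=delta-Q} from \cite[Theorem 5.1]{BBJ18}. Next I would establish the continuity of $\delta_\theta(\cdot)$ and $\delta^A_\theta(\cdot)$ exactly as in Section \ref{sec:conti}: the scaling properties are unchanged, and in both comparison principles (Propositions \ref{prop:delta-comparison} and \ref{prop:delta-A-comparison}) the twist enters only through a fixed affine shift of $D^{NA}$, respectively through the fixed positive weight $e^{-\psi}$ inside the integral $\int_X e^{-\lambda\varphi-\psi}\omega^n$, neither of which disturbs the monotonicity estimates on $S_L(F)$ and on $J_\omega(\varphi)$.

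With these in hand, part (2) is immediate: a solution $\omega$ makes $M_\theta$ bounded below, resp. coercive if $\omega$ is unique, by Theorem \ref{thm:D-M-tKE}.(4)--(5) (with $2\pi\theta\ge0$ in the role of $\alpha$, now along weak geodesics in $\mathcal{E}^1$), hence $\delta^A_\theta(\xi)\ge1$, resp. $>1$, and so $\delta_\theta(\xi)\ge\delta^A_\theta(\xi)\ge 1$, resp. $>1$. For part (1), assume $\delta_\theta(\xi)\ge 1+\varepsilon$ and pick ample $\QQ$-line bundles $L_i\to\xi$ with $s(L_i)>1$ and, by continuity of $\delta_\theta$, $\delta_\theta(L_i)\ge 1+\varepsilon/2$. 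The decisive step is the $\theta$-twisted analogue of Proposition \ref{prop:delta-A>1+e}, yielding $\varepsilon'=\varepsilon'(n,\varepsilon)>0$ with $\delta^A_\theta(L_i)\ge 1+\varepsilon'$; this is obtained by running the variational/non-Archimedean argument of \cite[v1]{BBJ18} entirely inside $\mathcal{E}^1(X,\omega)$, using \cite[Lemma 5.7.(iii)]{BBJ18} to control the asymptotic slope of $L_\theta$ along the geodesic rays approximating a minimizing geodesic ray, together with the bound $D^{NA}_\theta=L^{NA}-E^{NA}\ge(1-(1+\varepsilon)^{-1/n})J^{NA}$ supplied by $\delta_\theta(L_i)\ge 1+\varepsilon/2$ as in \cite[Theorem 7.3]{BBJ18}. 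Then continuity of $\delta^A_\theta$ gives $\delta^A_\theta(\xi)\ge 1+\varepsilon'$, and the $\theta$-twisted Proposition \ref{prop:YTD-delta-A}.(1) delivers $\omega\in 2\pi c_1(\xi)$ solving $\Ric(\omega)=\omega+2\pi\theta$.

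I expect the main obstacle to be precisely this $\mathcal{E}^1$-version of Proposition \ref{prop:delta-A>1+e}: one must check that the minimizing geodesic ray, its sup-normalization, the affineness of $E$ and the convexity of $D_\theta$, $M_\theta$ along it, and above all the approximation by geodesic rays arising from test configurations --- with the key identity $\lim_{m\to\infty}L^{NA}(\varphi_m)=\lim_{t\to\infty}\frac{1}{t} L_\theta(\phi_t)$ --- all remain valid when $\omega$ has only $\mathcal{E}^1$-regularity and $\theta$ is a klt current. This is exactly what \cite[\S5]{BBJ18}, and Lemma 5.7 there in particular, was designed to handle, so it should be a verification of compatibility rather than a source of new difficulties. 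A minor point worth flagging is that, $\theta$ being merely a klt current, the twisted K\"ahler--Einstein ``metric'' produced in part (1) lies a priori in $\mathcal{E}^1$ (smooth off the polar set of $\theta$), which is the expected output in the transcendental twisted setting.
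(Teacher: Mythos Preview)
Your proposal is correct and follows essentially the same route as the paper's outline in \S6.4 preceding Theorem \ref{thm:theta-YTD}: define $\delta_\theta$ and $\delta^A_\theta$, establish their continuity exactly as in \S\ref{sec:conti}, extend Propositions \ref{prop:delta-A-Mabuchi}, \ref{prop:delta-A>1=proper}, \ref{prop:YTD-delta-A} and \ref{prop:delta-A<=delta-R} to the $\theta$-twisted $\mathcal{E}^1$ setting via \cite[Proposition 4.11]{BBEGZ} and \cite[Lemma 2.8, Theorem 2.19]{BBJ18}, and carry the quantitative estimate of Proposition \ref{prop:delta-A>1+e} over $\mathcal{E}^1(X,\omega)$ using \cite[\S5, Lemma 5.7.(iii)]{BBJ18}. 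Your write-up is in fact somewhat more explicit than the paper's, and your identification of the $\mathcal{E}^1$-version of Proposition \ref{prop:delta-A>1+e} as the main technical checkpoint is exactly what the paper singles out.
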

For instance, take $\theta=[\Delta]$, where $\Delta$ is an effective $\RR$-divisor on $X$. Then $\theta$ being klt is the same as $(X,\Delta)$ being klt, in which case, the $\delta_\theta$-invariant is exactly the log $\delta$-invariant in the literature.  Now assume that $-K_X-\Delta$ is ample, then by Nadel vanishing theorem, we have $H^2(X,\mathcal{O}_X)=0$, which implies that the K\"ahler cone of $X$ coincides with the ample cone. Also note that, the solution to $\Ric(\omega)=\omega+2\pi[\Delta]$ has edge singularities along the simple normal crossing part of $\Delta$ \cite{GP}.
More generally, one can also add a smooth semi-positive form $\alpha$ to $\theta$ and consider the equation $\Ric(\omega)=\omega+\alpha+2\pi[\Delta]$. Then by Theorem \ref{thm:theta-YTD}, this equation is solvable if $\xi:=c_1(X)-[\alpha]/2\pi-[\Delta]>0$ and $\delta_\Delta(\xi)>1$. For explicit examples of such metrics, we refer the reader to \cite[Section 3]{ZZ20}.


\begin{thebibliography}{1}

\bibitem{B13}
R.J. Berman, A thermodynamical formalism for Monge--Amp\`ere equations, Moser--Trudinger inequalities and K\"ahler--Einstein metrics. Adv. Math. \textbf{248} (2013), 1254-1297. 

\bibitem{BB17a}
R.J. Berman, B. Berndtsson, The volume of K\"ahler-Einstein Fano varieties and convex bodies. J. Reine Angew. Math. 723 (2017), 127-152. 

\bibitem{BB17}
R.J. Berman, B. Berndtsson, Convexity of the K-energy on the space of K\"ahler metrics. J. Amer. Math. Soc. \textbf{30} (2017), 1165-1196.

\bibitem{BBGZ}
R.J. Berman, S. Boucksom, V. Guedj and A. Zeriahi, A variational approach to complex Monge–Amp\`ere equations. Publ. Math. Inst. Hautes Etudes Sci. 117 (2013), 179-245.

\bibitem{BBEGZ}
R.J. Berman, S. Boucksom, P. Eyssidieux, V. Guedj, A. Zeriahi, K\"ahler--Einstein metrics and the K\"ahler--Ricci flow on log Fano varieties. J. Reine Angew. Math. \textbf{751} (2019), 27-89.

\bibitem{BBJ18}
R.J. Berman, S. Boucksom, M. Jonsson. A variational approach to the Yau-Tian-Donaldson conjecture,
arXiv:1509.04561v2 (2018), to appear in J. Amer. Math. Soc.

\bibitem{BDL}
R.J. Berman, T. Darvas and C.H. Lu. Convexity of the extended K-energy and the large time behavior of the weak Calabi flow. Geom. Topol. \textbf{21} (2017), 2945-2988.

\bibitem{B15}
B. Berndtsson. A Brunn--Minkowski type inequality for Fano manifolds and some uniqueness theorems in K\"ahler geometry. Invent. Math. \textbf{200} (2015), 149-200.

\bibitem{BJ17}
H. Blum, M. Jonsson, Thresholds, valuations, and $K$-stability,  Adv. Math. 365 (2020), 107062, 57 pp.

\bibitem{BLZ}
H. Blum, Y. Liu, C. Zhou,
Optimal destabilization of K-unstable Fano varieties via stability thresholds, arXiv:1907.05399 (2019).

\bibitem{BL}
H. Blum, Y. Liu, Openness of uniform K-stability in families of Q-Fano varieties, arXiv:1808.09070 (2018), to appear in Ann. Sci. Éc. Norm. Supér.


\bibitem{BEGZ}
S. Boucksom, P. Eyssidieux, V. Guedj, A. Zeriahi, Monge--Amp\`ere equations in big cohomology classes. Acta Math. \textbf{205} (2010), 199-262. 

\bibitem{BFJ}
S. Boucksom, C. Favre and M. Jonsson, Differentiability of volumes of divisors and a problem of Teissier, J. Algebraic Geom. \textbf{18} (2009), 279-308.

\bibitem{BJ18}
S. Boucksom, M. Jonsson. A non-Archimedean approach to K-stability, arXiv:1805.11160v1 (2018).


\bibitem{CRZ}
I. Cheltsov, Y.A. Rubinstein, K. Zhang, Basis log canonical thresholds, local intersection estimates, and asymptotically log del Pezzo surfaces, Selecta Math. (N.S.) \textbf{25} (2019), Art. 34, 36 pp. 


\bibitem{CS}
I.\,Cheltsov, C.\,Shramov, {Log canonical thresholds of
smooth Fano threefolds}, with an appendix by J.-P. Demailly,
Russian Math. Surv. {63} (2008), 859--958.

\bibitem{CC18}
X.-X. Chen, J. Cheng,
On the constant scalar curvature K\"ahler metrics, existence results, arXiv:1801.00656 (2018).

\bibitem{DR}
T. Darvas, Y.A. Rubinstein. Tian’s properness conjectures and Finsler geometry of the space of K\"ahler metrics. J. Amer. Math. Soc. \textbf{30} (2017), 347-387.

\bibitem{DS}
V. Datar, G. Sz\'ekelyhidi, K\"ahler--Einstein metrics along the smooth continuity method. Geom. Funct. Anal. \textbf{26} (2016), 975-1010.

\bibitem{D15}
R. Dervan. Alpha invariants and K-stability for general polarizations of Fano varieties. Int. Math. Res. Notices. 2015, 7162-7189.

\bibitem{D16}
R. Dervan, Alpha invariants and coercivity of the Mabuchi functional on Fano manifolds, Ann. Fac. Sci. Toulouse S\'er. \textbf{25} (2016), 919-934.

\bibitem{D16b}
R. Dervan. Uniform stability of twisted constant scalar curvature K\"ahler metrics. Int. Math. Res. Not. 2016, 4728-4783.

\bibitem{DR17}
R. Dervan, J. Ross, K-stability for K\"ahler manifolds. Math. Res. Lett. \textbf{24} (2017), 689-739.

\bibitem{D}
W.-Y. Ding, Remarks on the existence problem of positive K\"ahler--Einstein metrics. Math. Ann. \textbf{282} (1988), 463-471.

\bibitem{D17}
Z. Sj\"ostr\"om Dyrefelt, K-semistability of cscK manifolds with transcendental cohomology class. J. Geom. Anal. \textbf{28} (2018), 2927-2960.

\bibitem{D19}
Z. Sj\"ostr\"om Dyrefelt, Optimal lower bounds for Donaldson’s J-functional,
Adv. Math. 374 (2020), 107271, 37 pp.

\bibitem{D20}
Z. Sj\"ostr\"om Dyrefelt, Existence of cscK metrics on smooth minimal models, arXiv:2004.02832 (2020).
	

\bibitem{FO}
K.~Fujita, Y.~Odaka, {On the $K$-stability of Fano varieties and anticanonical divisors}, Tohoku Math. J. (2) \textbf{70} (2018), 511-521.


\bibitem{GP}
H. Guenancia and M. Paun. Conic singularities metrics with prescribed Ricci curvature: general cone angles along normal crossing divisors. J. Diff. Geom. \textbf{103} (2016), 15-57.


\bibitem{L04}
R. Lazarsfeld, {Positivity in Algebraic Geometry, I,II}, Springer, 2004.


\bibitem{LSY}
H. Li, Y. Shi, Y. Yao, A criterion for the properness of the K-energy in a general K\"ahler class. Math. Ann. \text{361} (2015), 135-156.

\bibitem{L20}
C. Li, Geodesic rays and stability in the cscK problem , arXiv:2001.01366 (2020), to appear in Ann. Sci. Éc. Norm. Supér.


\bibitem{PSSW}
D.H. Phong, J. Song, J. Sturm, B. Weinkove, The Moser--Trudinger inequality on K\"ahler--Einstein manifolds. Amer. J. Math. \textbf{130} (2008),
1067-1085.

\bibitem{RS}
R. Julius, G. Sz\'ekelyhidi, Twisted K\"ahler--Einstein metrics, arXiv:1911.03442  (2019), to appear in Pure Appl. Math. Q.



\bibitem{R08b}
Y.A. Rubinstein, On energy functionals, K\"ahler--Einstein metrics, and the Moser--Trudinger--Onofri neighborhood. J. Funct. Anal. \textbf{255} (2008), 2641-2660.





\bibitem{SW}
J. Song, X. Wang, The greatest Ricci lower bound, conical Einstein metrics and Chern number inequality. Geom. Topol. \textbf{20} (2016), 49-102.


\bibitem{Sze11}
G. Sz\'ekelyhidi, {Greatest lower bounds on the Ricci curvature of Fano manifolds},
Compositio Math. \textbf{147} (2011), 319--331.



\bibitem{T87}
G. Tian, On K\"ahler--Einstein metrics on certain K\"ahler manifolds with $c_1(M)>0$, Invent. Math. \textbf{89}
(1987), 225-246.

\bibitem{T92}
G. Tian, On stability of the tangent bundles of Fano varieties, Internat. J. Math. 3 (1992), 401--413.

\bibitem{T97}
 G. Tian, K\"ahler--Einstein metrics with positive scalar curvature, Invent. Math. \textbf{137} (1997), 1-37.

\bibitem{T00}
G. Tian, Canonical metrics in K\"ahler geometry. Lectures in Mathematics ETH Zu\"rich. Notes taken by Meike Akveld. Birkh\"auser Verlag, Basel, 2000,
pp. vi+101.

\bibitem{TZ}
G. Tian and X. Zhu, A nonlinear inequality of Moser--Trudinger type, Calc. Var. \textbf{10} (2000), 349-354.


\bibitem{W06}
B. Weinkove, On the J-flow in higher dimensions and the lower boundedness of the Mabuchi energy. J. Differential Geom. \textbf{73} (2006), 351-358.

\bibitem{Z20}
K. Zhang, On the optimal volume upper bound for K\"ahler manifolds with positive Ricci curvature, Int. Math. Res. Not. IMRN (2020), https://doi.org/10.1093/imrn/rnaa295.

\bibitem{ZZ20}
K. Zhang, C. Zhou, Delta invariants of projective cones and projective bundles of Fano type, Math. Z. (2021), https://doi.org/10.1007/s00209-021-02787-7.



\end{thebibliography}
\end{document}